\documentclass[a4paper,10pt]{article}
\usepackage{amsthm,amsmath,amssymb,bbm,enumerate,nicefrac,hyperref}
\usepackage[a4paper, left=1.5cm, right=1.5cm]{geometry}
\usepackage{color}
\usepackage{xfrac}
\usepackage{comment}

\hypersetup{colorlinks=true}

\usepackage[capitalize]{cleveref}
\crefname{subsection}{Subsection}{Subsections}

\newtheorem{theorem}{Theorem}[section]
\newtheorem{lemma}[theorem]{Lemma}
\newtheorem{proposition}[theorem]{Proposition} 
 
\newtheorem{setting}[theorem]{Setting} 

\newcommand{\vertiii}[1]{{\left\vert\kern-0.25ex\left\vert\kern-0.25ex\left\vert #1 
    \right\vert\kern-0.25ex\right\vert\kern-0.25ex\right\vert}}
    
\newcommand{\vertiiii}[1]{{|\kern-0.25ex|\kern-0.25ex| #1 
    |\kern-0.25ex|\kern-0.25ex|}}

\newcommand{\R}{\mathbb{R}}
\newcommand{\N}{\mathbb{N}}
\newcommand{\E}{\mathbb{E}}
\newcommand{\Z}{\mathbb{Z}}
\renewcommand{\P}{\mathbb{P}}

\title{Full history recursive multilevel Picard \\ approximations for ordinary differential \\ equations with expectations}
\author{ 
	Christian Beck$^{1}$, 
	Martin Hutzenthaler$^2$, 
	\\
	Arnulf Jentzen$^{3}$,  
	and 
	Emilia Magnani$^{4}$
	\bigskip
	\\
	\small{$^1$ Department of Mathematics, ETH Zurich, Z\"urich, Switzerland;} \\
	\small{  Faculty of Mathematics and Computer Science, University of M\"unster, }\\
	\small{M\"unster, Germany; e-mail: \texttt{christian.beck@uni-muenster.de}}
	\\
	\small{$^2$ Faculty of Mathematics, University of Duisburg-Essen,}\\
	\small{Essen, Germany; e-mail: \texttt{martin.hutzenthaler@uni-due.de}}
	\\
	\small{$^3$ Department of Mathematics, ETH Zurich, Z\"urich, Switzerland;}\\
	\small{ Faculty of Mathematics and Computer Science, University of} 
	\\
	\small{ M\"unster, M\"unster, Germany; School of Data Science and Shenzhen}
	\\ 
	\small{ Research Institute of Big Data, The Chinese  University of}\\
	\small{ Hong Kong, Shenzhen, China; e-mail: \texttt{ajentzen@uni-muenster.de}}
	\\
	\small{$^4$ Department of Computer Science, University of T\"ubingen,}\\
	\small{T\"ubingen, Germany; 
	Department of Mathematics,  ETH Zurich,}  \\
	\small{ Z\"urich, Switzerland; e-mail: \texttt{emilia.magnani@uni-tuebingen.de }}
}

\begin{document}
\maketitle

\begin{abstract}
We consider ordinary differential equations (ODEs) which involve
expectations of a random variable. These ODEs are special cases of McKean--Vlasov stochastic differential equations (SDEs).  A plain vanilla Monte Carlo approximation method for such ODEs requires a computational cost of order $\varepsilon^{-3}$ to achieve
a root-mean-square error of size $\varepsilon$. In this work we adapt recently introduced full history recursive multilevel Picard (MLP) algorithms to reduce this computational complexity. Our main result shows  for every $\delta>0$ that the proposed MLP approximation algorithm requires only a computational effort of order $\varepsilon^{-(2+\delta)}$ to achieve a root-mean-square error of size $\varepsilon$.
\end{abstract}

\newpage

\tableofcontents

\section{Introduction}

It is a very challenging task in applied mathematics to design efficient approximation algorithms for high-dimensional partial differential equations (PDEs).  
Recently, significant progress has been made in this area of research through the development of so-called \textit{full history recursive multilevel Picard} (MLP) approximation algorithms \cite{hutzenthaler2016multilevel,MR3946468,hutzenthaler2018overcoming}. 
Up to now, MLP approximation algorithms are the only approximation algorithms in the scientific literature for which it has been rigorously proved that they can overcome the curse of dimensionality in the numerical approximation of second-order semilinear elliptic and parabolic PDEs with general time horizons in the sense that the number of computational operations needed to achieve a desired approximation accuracy $\varepsilon \in (0,\infty)$ grows at most polynomially in both the PDE dimension $d \in \N = \lbrace 1,2,3,\dots \rbrace$ and the reciprocal $\nicefrac{1}{\varepsilon}$ of the desired  approximation accuracy $\varepsilon$.
We also refer to \cite{beck2020overcoming,beckAllenCahn,becker2020numerical, giles2019generalised,hutz2019overcoming,hutzenthaler2020multilevel,hutzenthaler2018overcoming,hutzenthaler2019overcoming,hutzenthaler2020multi} and the overview articles \cite{EMSNewsletter2020OverviewOnDLBasedApproximationMethodsForPDEs,EHanJentzen2020OverviewFromNonlinearMonteCarloToMachineLearning} for computational problems in which MLP approximation algorithms have been shown to overcome the curse of dimensionality. \par
To develop a better understanding for the recently proposed MLP approximation methods, we aim within this article to extend the MLP approximation algorithms to the case of ordinary differential equations (ODEs) involving the expectations of random variables. 
To achieve a root-mean-square error of size $\varepsilon\in(0,\infty)$ a plain vanilla Monte Carlo method requires a computational cost of  order $\varepsilon^{-3}$.
We reduce this computational cost and show
that for an arbitrarily small $\delta\in(0,\infty)$ the proposed MLP approximation schemes achieve an approximation accuracy of size $\varepsilon\in(0,\infty)$ with a computational effort of order $\varepsilon^{-(2+\delta)}$.
More precisely, \cref{THEOREM} below, the main result of this article, proves under suitable assumptions that for 
every $\varepsilon \in (0,1]$, $\delta \in (0,\infty)$ the  MLP approximation scheme achieves a root mean
 square error of at most $\varepsilon$ with a computational effort of order $\varepsilon^{-(2+\delta)}$.
As an illustration of 
\cref{THEOREM} below, we present now in the following result,  \cref{theoremintroduction}, a simplified version of \cref{THEOREM}.

\begin{theorem} \label{theoremintroduction}
Let $d \in \N$, $\delta \in (0, \infty)$,
$T, L \in [0,\infty)$, $\Theta = \cup_{n=1}^{\infty} {\Z}^n$, $X \in \mathcal{C}([0,T],\R^d)$, $\xi \in \R^d$, let $ \left\|  \cdot \right\| \! \colon \R^d \to [0,\infty) $ be a norm on $\R^d$, 
  let $(S,\mathcal{S})$ be a
measurable space, 
let $F \colon{\R}^d\times
S\rightarrow {\R}^d$  be $(\mathcal{B}({\R}^d) \otimes \mathcal{S}) / \mathcal{B}({\R}^d)$-measurable,
assume for all $x,y \in {\R}^d$, $s \in S$  that
 $\| F(x,s)-F(y,s)\| \leq L\|x-y\|$,
 let $(\Omega,\mathcal{F},{\P})$ be a probability space,
 let $Z^\theta \colon\Omega\rightarrow S$, $\theta \in\Theta$,
be i.i.d.\ random variables, 
assume that $\E[ \|F(\xi, Z^0) \|^2] < \infty$,
assume for all $t \in [0,T]$ that 
 $X(t)=\xi + \int_0^t \E[F(X(r),Z^0)]\,dr$,
let $\mathfrak{r}^\theta \colon \Omega \rightarrow [0,1]$,
$\theta \in \Theta$, be independent $\mathcal{U}_{[0,1]}$-distributed random variables,
assume that
${(\mathfrak{r}^\theta)}_{\theta\in\Theta}$ and
${(Z^\theta)}_{\theta\in\Theta}$ are independent,
let ${\mathcal{X}}_{n,m}^{\theta} \colon [0,T]\times\Omega \rightarrow {\R}^d $, $n,m \in \N_0$, $\theta \in \Theta$, satisfy
for all  $ n \in {\N_0}$, $m \in \N$, $\theta \in \Theta$, $t \in [0,T]$ 
that 
\begin{equation}\label{approxscheme}
 \begin{split}
 {\mathcal{X}}_{n,m}^{\theta} (t) 
 & = \Bigg[
 \sum_{l=1}^{n-1}
 \frac{t}{m^{n-l}}  \sum_{k=1}^{m^{n-l}} F \big( \mathcal{X}_{l,m}^{(\theta,l,k)} (\mathfrak{r}^{(\theta,l,k)}t), Z^{(\theta,l,k)} \big) 
 - F \big( \mathcal{X}_{l-1,m}^{(\theta,l,-k)}(\mathfrak{r}^{(\theta,l,k)}t), Z^{(\theta,l,k)} \big)\Bigg] 
 \\& 
 + \Bigg[  \frac{t \mathbbm{1}_{\N}(n)}{m^n} \sum_{k=1}^{m^n} F \big(\xi,Z^{(\theta,0,k)}\big)\Bigg]  + \xi,
 \end{split}
\end{equation}
and for every $n, m \in {\N}$
let $RV_{n,m} \in \N$ be the number of realizations of random
variables $(Z^\theta)_{\theta \in \Theta}$ which are used to compute one realization of
${\mathcal{X}}_{n,m}^{0} (T) $
(cf.\ \eqref{againRV}).
Then
there exist $c \in  \R$ and $N=(N_{\varepsilon})_{\varepsilon\in (0,1]} \colon  (0, 1]  \rightarrow \N$
such that for all $\varepsilon \in (0, 1]$ it holds that
$RV_{N_{\varepsilon},N_{\varepsilon}} \leq c\,  \varepsilon^{-(2+\delta)}$ and
$( {\E} [\| X(T) - {\mathcal{X}}_{N_{\varepsilon},N_{\varepsilon}}^{0}(T)\|^2] )^{1/2} \leq \varepsilon$. 
\end{theorem}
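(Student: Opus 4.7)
The plan is to reduce the theorem to two standard MLP ingredients—an $L^2$-error bound for $\mathcal{X}_{n,m}^0 - X$ and a cost bound for $RV_{n,m}$—and to balance them by taking $m = n = N_\varepsilon$.

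For the error bound, the aim is to establish
\begin{equation*}
\sup_{t \in [0,T]} \bigl(\E\bigl[\|X(t) - \mathcal{X}_{n,m}^0(t)\|^2\bigr]\bigr)^{\!1/2} \leq \frac{(C_1(1+T))^n}{m^{n/2}}
\end{equation*}
for a constant $C_1 = C_1(L,T,\E[\|F(\xi,Z^0)\|^2]) \in (0,\infty)$. I would introduce the deterministic Picard iterates $\mathcal{U}_0(t) := \xi$, $\mathcal{U}_n(t) := \xi + \int_0^t \E[F(\mathcal{U}_{n-1}(r),Z^0)]\,dr$, which by a standard Gr\"onwall argument converge to $X$ at rate $(LT)^n/n!$. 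The crucial structural observation behind MLP is that the multi-index tree $\Theta = \cup_{n=1}^\infty \Z^n$ yields rich independence: for each $\alpha = (\theta,l,k)$, the iterate $\mathcal{X}_{l,m}^\alpha$ depends only on $\{Z^\beta, \mathfrak{r}^\beta\}$ with $\beta$ a strict extension of $\alpha$; in particular, the pair $(\mathcal{X}_{l,m}^{(\theta,l,k)}, \mathcal{X}_{l-1,m}^{(\theta,l,-k)})$ in the $k$-th summand of~\eqref{approxscheme} is independent of $(Z^{(\theta,l,k)}, \mathfrak{r}^{(\theta,l,k)})$, while summands with distinct $(l,k)$ are mutually independent. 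Combining these independences with the Lipschitz bound on $F$ produces a Gr\"onwall-type recursion on $a_n := \sup_{t \leq T}\|X(t) - \mathcal{X}_{n,m}^0(t)\|_{L^2}$ with an explicit factor $1/m$ in front of the feedback terms $a_l^2, a_{l-1}^2$, which I would then unroll inductively to obtain the stated bound.

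For the cost, the definition~\eqref{againRV} gives $RV_{n,m} \leq m^n + \sum_{l=1}^{n-1} m^{n-l}(RV_{l,m} + RV_{l-1,m} + 2)$ for $n \geq 1$, and a routine induction yields $RV_{n,m} \leq C_2(3m)^n$ for some $C_2 \in (0,\infty)$. Setting $m = n$ and defining $N_\varepsilon := \min\{n \in \N : (C_1(1+T))^n / n^{n/2} \leq \varepsilon\}$, the error bound gives $(\E[\|X(T) - \mathcal{X}_{N_\varepsilon,N_\varepsilon}^0(T)\|^2])^{1/2} \leq \varepsilon$ by construction. The defining inequality implies $N_\varepsilon \log N_\varepsilon \asymp 2 \log(1/\varepsilon)$, hence $(3N_\varepsilon)^{N_\varepsilon} = \exp(N_\varepsilon \log N_\varepsilon + O(N_\varepsilon)) \leq c\,\varepsilon^{-(2+\delta)}$ for any fixed $\delta > 0$ and some $c = c(\delta) \in \R$; this delivers the required cost bound on $RV_{N_\varepsilon,N_\varepsilon}$.

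The main obstacle is closing the error recursion. One must carefully separate the Picard bias from the Monte Carlo variance at each level, exploit the multi-index independence to eliminate cross terms, and verify that the $1/m$ prefactor wins the induction against the feedback of $a_l^2$ on the right-hand side—this is the technical heart of every MLP analysis, and it relies essentially on the fact that the two independent "copies" $\mathcal{X}_{l,m}^{(\theta,l,k)}$ and $\mathcal{X}_{l-1,m}^{(\theta,l,-k)}$ are coupled via the shared $Z^{(\theta,l,k)}$ for variance reduction but are otherwise free. Once this recursion is in hand, the Picard-Lindel\"of bias estimate, the cost induction, and the choice of $N_\varepsilon$ are then routine bookkeeping.
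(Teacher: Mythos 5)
Your overall architecture matches the paper's: bias--variance decomposition exploiting the multi-index independence, variance reduction through the coupled pair $\mathcal{X}_{l,m}^{(\theta,l,k)},\mathcal{X}_{l-1,m}^{(\theta,l,-k)}$ sharing $Z^{(\theta,l,k)}$, the cost bound $RV_{n,m}\leq(3m)^n$, the diagonal choice $m=n$, and the balancing $N_\varepsilon\log N_\varepsilon\asymp 2\log(1/\varepsilon)$ (this last part coincides with \cref{lemma1}, \cref{l4.3}, and \cref{Lemma2.4}). However, the error analysis as you describe it has a genuine gap. First, the intermediate bound $\sup_{t\le T}(\E[\|X(t)-\mathcal{X}_{n,m}^0(t)\|^2])^{1/2}\le (C_1(1+T))^n m^{-n/2}$ is false for general $(n,m)$: the bias $\E[\mathcal{X}_{n,m}^0(t)]-X(t)=\int_0^t(\E[F(\mathcal{X}_{n-1,m}^0(r),Z^0)]-\E[F(X(r),Z^0)])\,dr$ is a Picard-iteration error that does not decay in $m$ at all; already for $n=1$ one has $\E[\mathcal{X}_{1,m}^0(T)]-X(T)=\int_0^T(\E[F(\xi,Z^0)]-\E[F(X(r),Z^0)])\,dr$, a fixed nonzero quantity for generic $F$, so the left-hand side cannot tend to $0$ as $m\to\infty$ with $n=1$ fixed. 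The correct bound (\cref{thmerrorestimate}) carries an extra factor $e^{m/2}$, which is harmless on the diagonal $m=n$ but cannot be dropped off it.

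Second, and more seriously, the Gr\"onwall recursion you propose on $a_n:=\sup_{t\le T}\|X(t)-\mathcal{X}_{n,m}^0(t)\|_{L^2}$ does not close. The bias contributes to $a_n^2$ a term of the form $L^2T\int_0^t\E[\|\mathcal{X}_{n-1,m}^0(r)-X(r)\|^2]\,dr\le L^2T^2a_{n-1}^2$ with \emph{no} factor $1/m$; unrolling a recursion of the form $a_n^2\le c_0 m^{-n}+L^2T^2a_{n-1}^2+\sum_l m^{-(n-l)}(\dots)$ produces a factor $(LT)^{2n}$ without the compensating $1/(n!)^2$, which does not decay when $LT\ge1$. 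The $1/n!$ that rescues the deterministic Picard iterates $\mathcal{U}_n$ (which you correctly invoke, but which never enter the argument, since $\E[\mathcal{X}_{n,m}^0]$ is driven by the random $\mathcal{X}_{n-1,m}^0$ rather than by $\mathcal{U}_{n-1}$) is only recovered if the time integrals on the right-hand side are retained and iterated rather than replaced by suprema. This is exactly the purpose of the two-parameter quantities $\epsilon_{n,k}$ in \eqref{epsilon0}, built from the weighted integrals $\tfrac{1}{T^{k+1}}\int_0^T\tfrac{(T-t)^k}{k!}\E[\|\mathcal{X}_{n,m}^0(t)-X(t)\|^2]\,dt$, together with the two-index Gronwall inequality of \cref{l3.12}: each iteration of the recursion raises $k$ by one and the $1/k!$ beats the accumulating powers of $m$ and of $LT$. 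Without this (or an equivalent device, e.g.\ working on a time horizon short enough that $LT<1$ and concatenating), the induction you outline cannot deliver the claimed $m^{-n/2}$ rate.
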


\cref{theoremintroduction} is an immediate consequence of \cref{THEOREM} in \cref{section3} below.
\cref{theoremintroduction} establishes under suitable conditions that for every $\delta \in (0,\infty)$ there exists $c \in \R$ such 
that the solution $X \in \mathcal{C}([0,T],\R^d)$ of the differential equation $X(t)=\xi + \int_0^t \E[F(X(r),Z^0)]\,dr$, $t \in [0,T]$, (cf.\ \cref{finiteexp}), can be approximated by the recursive MLP approximation schemes in \eqref{approxscheme} with a root mean square 
 error of size $\varepsilon \in (0,1]$ and a computational effort that is bounded by $c\,\varepsilon^{-(2+\delta)}$.
The computational effort is quantified by the numbers $RV_{n,m}$, $n, m \in \N$. 
The function $F \colon{\R}^d\times S\rightarrow {\R}^d$ is required to be $(\mathcal{B}({\R}^d) \otimes \mathcal{S}) / \mathcal{B}({\R}^d)$
-measurable and Lipschitz continuous in the first variable, uniformly in the second variable,
and we assume that $\E[ \|F(\xi, Z^0) \|^2] < \infty$. 
We note that differential equations of the type $X(t)=\xi + \int_0^t \E[F(X(r),Z^0)]\,dr$, $t \in [0,T]$, can be considered as a special case of 
so-called \textit{McKean--Vlasov stochastic differential equations} (SDEs). 
Numerical approximation methods for McKean--Vlasov SDEs have been widely developed in the scientific literature. In particular, in the article 
 \cite{bossy1997stochastic} Bossy and Talay carried out the approximation of McKean--Vlasov SDEs with simulations through interacting particles systems and time discretizations. Many other authors have contributed to this approach. In particular, we refer among others to \cite{antonelli2002,belomestny2018projected,belomestny2019iterative, bossy2002rate,bossy1996convergence,DosReis10.1093/imanum/draa099,reis2018importance,reisinger2020adaptive, szpruch2019, talay2003stochastic}.
Moreover, alternative approximation methods for McKean--Vlasov SDEs relying on cubature formulas \cite{CHAUDRUDERAYNAL20152206,crisan2019cubature}, analytical expansions \cite{gobet2018analytical}, or tamed Milstein schemes \cite{bao2021first,kumar2020explicit} have been developed.
For further numerical approximation methods for McKean--Vlasov SDEs we also refer, e.g., to \cite{agarwal2020fourier, chassagneux2019,biswas2020well}. 
The problems which are treated in these references are of course far more general and involved than the expectation ODEs which we consider in this article. 
To the best of our knowledge, \cref{theoremintroduction} is the first result in the scientific literature which shows that solutions of the special class of McKean--Vlasov SDEs considered in this article can be approximated with a root-mean-square error of size $\varepsilon$
with a computational cost of order $\varepsilon^{-(2+\delta)}$.  \par
The remainder of this article is organized as follows.
\cref{Section2} introduces multilevel Picard (MLP) approximation schemes for a special type of ODEs involving expectations of
 a random variable (see \cref{setting} in \cref{subsectionsetting}) and provides $L^2$-error estimates for the differences
  between the MLP approximations and the exact solution of the ODE under consideration. 
In \cref{section3} we combine the error analysis from \cref{Section2} with suitable estimates for the computational effort for the
 proposed MLP approximation algorithms (see \cref{lemma1} below) to perform a complexity analysis for the proposed MLP
  approximation algorithms and we show in \cref{THEOREM} that for every $ \delta \in (0,\infty)$, $\varepsilon \in (0,1]$ it 
  holds that MLP approximations can achieve a root mean square error of size $\varepsilon \in (0,1]$ with a computational 
  effort of size
$\varepsilon^{-(2+\delta)}$.

\section{Error analysis for multilevel Picard (MLP) approximations}\label{Section2}

In this section we introduce in \cref{setting} in \cref{subsectionsetting} below MLP approximations (see \eqref{numsol} in \cref{setting}) for ODEs involving expectations (see \eqref{sol} in \cref{setting}) and we provide in \cref{thmerrorestimate} in \cref{subsectionerrorestimate} an $L^2$-error analysis for the proposed MLP approximation schemes. 
In particular, we establish in \cref{thmerrorestimate}, the main result of this section, an upper bound for the root mean square error between solutions of ODEs involving expectations and the corresponding MLP approximations. 

Our proof of \cref{thmerrorestimate} is heavily motivated by \cite{hutzenthaler2018overcoming} and has, rougly speaking, four steps: 
first, (i) we perform a bias-variance decomposition of the mean square error between solutions of ODEs involving expectations and the corresponding MLP approximations, 
second, (ii) we estimate the biases of the proposed MLP approximations, 
third, (iii) we estimate the variances of the proposed MLP approximations, 
and, finally, (iv) we obtain the final bound by applying a Gronwall type argument to the recursive inequalities obtained through combining the bias-variance decomposition (see (i) above) with the bias estimates (see (ii) above) and the variance estimates (see (iii) above). 
Our proof of \cref{thmerrorestimate} relies on several auxiliary results, which we present in \crefrange{subsectiongronwall}{subsectionerrorestimate} below. 
In \cref{subsectiongronwall} we recall in \cref{cor2.2} and \cref{l3.12} elementary and well-known time-discrete Gronwall inequalities. 
Proofs for the results in \cref{cor2.2} and \cref{l3.12} in \cref{subsectiongronwall} below can be found, e.g., in \cite[Corollary 2.2 and Lemma 3.12]{hutzenthaler2019overcoming}.
In \cref{subsectiononrando} we exhibit in \crefrange{l2.14}{l2.16new} elementary and well-known results about random variables which arise from evaluating random fields at random indices.
Proofs for \cref{l2.14} and \cref{l2.15} can, e.g., be found in  \cite[Lemma 2.14]{hutzenthaler2019overcoming} and \cite[Lemma 2.3]{hutzenthaler2018overcoming}. We also include in this section a proof of \cref{l2.16new}, which is a slight modification of \cite[Lemma 2.16]{hutzenthaler2019overcoming}. 
In \cref{subsectionapriori} we establish in \cref{l3.3} elementary a priori bounds for solutions of ODEs involving expectations.
In \cref{subsectionproperties} we present in \cref{propMLP} and \cref{expMLP} some fundamental measurability, integrability, and distribution properties of MLP approximations. The proofs of \cref{propMLP} and \cref{expMLP} employ the well-known and elementary results in \cref{l3.4}, \cref{l3.5}, and \cref{l3.7} below. Proofs for \cref{l3.4} and \cref{l3.5} can be found, e.g., in \cite[Lemma 3.4 and Lemma 3.5]{hutzenthaler2019overcoming}. Note that \cref{l3.7} is a slightly modified version of 
\cite[Lemma 3.7]{hutzenthaler2019overcoming}. 
In \cref{subsectionerrorestimate} we recall some elementary and well-known results in \cref{variances}--\cref{l3.14} below before proving \cref{thmerrorestimate}. We refer, e.g., to \cite[Lemma 3.10]{hutzenthaler2019overcoming} and \cite[Lemma 2.9]{hutzenthaler2019overcoming} for proofs of \cref{variances} and \cref{l2.9}.  
\cref{l3.14}, which is a simple consequence of Tonelli's theorem, is a slightly modified version of \cite[Lemma 3.14]{hutzenthaler2019overcoming}.

\subsection{Setting}\label{subsectionsetting}
\begin{setting} \label{setting}
Let $d \in {\N}$, $T, L \in [0,\infty)$, $\Theta = \cup_{n=1}^{\infty} {\Z}^n$, $ X \in \mathcal{C}([0,T], {\R}^d)$, $\xi \in \R^d$, 
let $ \left\|  \cdot \right\| \! \colon \R^d \to [0,\infty) $ be the standard norm on $\R^d$, 
let $(S,\mathcal{S})$ be a
measurable space, let $F\colon{\R}^d\times
S\rightarrow {\R}^d$ be $(\mathcal{B}({\R}^d) \otimes \mathcal{S}) / \mathcal{B}({\R}^d)$-measurable,
assume for all $x,y \in {\R}^d$, $s \in S$ that 
\begin{equation} \label{lipsch}
\left\| F(x,s)-F(y,s)  \right\| \leq L \| x-y \|,
\end{equation}
let $(\Omega,\mathcal{F},{\P})$ be a probability space,
let $Z^\theta \colon\Omega\rightarrow S$, $\theta \in\Theta$,
be independent and identically distributed (i.i.d.) random variables, 
let $\mathfrak{r}^\theta \colon \Omega \rightarrow [0,1]$,
$\theta \in \Theta$, be independent $\mathcal{U}_{[0,1]}$-distributed random variables,
 let $\mathcal{R}^\theta = (\mathcal{R}^\theta_t)_{t \in [0,T]} \colon
[0,T]\times\Omega \rightarrow [0,T]$, $\theta \in \Theta$, satisfy for all $t \in [0,T]$,  $\theta \in \Theta$ that 
$\mathcal{R}^\theta_t =  \mathfrak{r}^\theta t$,
assume that
${(\mathfrak{r}^\theta)}_{\theta\in\Theta}$ and
${(Z^\theta)}_{\theta\in\Theta}$ are independent,
assume  for all $t \in [0,T]$ that $ \int_0^t  \E [ \| F (  X(r), Z^0 ) \| ]\,  dr  <\infty$ and 
\begin{equation} \label{sol}
X(t) = \xi + \int_0^t {\E [ F (  X(r), Z^0 )]}\, dr,
\end{equation}
and let  
${\mathcal{X}}_{n,m}^\theta \colon [0,T]\times\Omega \rightarrow {\R}^d $, $n,m \in \N_0$, $\theta \in \Theta$, satisfy
for all  $n \in \N_0$, $m \in {\N}$, $\theta \in \Theta$, $t \in [0,T]$ 
that 
\begin{equation} \label{numsol}
 \begin{split}
{\mathcal{X}}_{n,m}^\theta (t) & = \Bigg[
 \sum_{l=1}^{n-1}
\frac{t}{m^{n-l}}  \sum_{k=1}^{m^{n-l}} \bigg( F\big( \mathcal{X}_{l,m}^{(\theta,l,k)} (\mathcal{R}_{t}^{(\theta,l,k)}), Z^{(\theta,l,k)} \big) - F\big( \mathcal{X}_{l-1,m}^{(\theta,l,-k)}(\mathcal{R}_{t}^{(\theta,l,k)}), Z^{(\theta,l,k)} \big) \bigg)\Bigg]
\\
& + \left[  \frac{t \mathbbm{1}_{\N}(n)}{m^n}  \sum_{k=1}^{m^n} F\big(\xi,Z^{(\theta,0,k)}\big)\right]  + \xi.
 \end{split}
\end{equation}
\end{setting}

\subsection{Time-discrete Gronwall inequalities}\label{subsectiongronwall}

\begin{lemma}\label{cor2.2}
Let $N \in \N \cup \lbrace \infty \rbrace$, $\alpha,\beta \in [0,\infty)$, $(\epsilon_n)_{n \in \N_0\cap [0,N]} \subseteq[0,\infty]$ satisfy for all $n \in \N_0\cap [0,N]$ that
$\epsilon_n \leq \alpha + \beta \sum_{k=0}^{n-1} \epsilon_k $.
Then it holds for all $n \in \N_0\cap [0,N]$ that
$\epsilon_n \leq \alpha(1+\beta)^n \leq \alpha e^{\beta n} < \infty$.
\end{lemma}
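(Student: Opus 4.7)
The plan is a straightforward strong induction on $n \in \N_0 \cap [0,N]$ establishing the bound $\epsilon_n \leq \alpha(1+\beta)^n$, from which the two remaining inequalities follow at once.

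For the base case $n = 0$, I would observe that the empty sum convention gives $\sum_{k=0}^{-1} \epsilon_k = 0$, so the hypothesis reduces to $\epsilon_0 \leq \alpha = \alpha(1+\beta)^0$. For the inductive step, assuming $\epsilon_k \leq \alpha(1+\beta)^k$ for all $k \in \{0, 1, \ldots, n-1\}$, I would plug this into the assumed recursive bound and use the geometric series identity:
\begin{equation*}
\epsilon_n \leq \alpha + \beta \sum_{k=0}^{n-1} \alpha(1+\beta)^k = \alpha + \alpha\beta \cdot \frac{(1+\beta)^n - 1}{\beta} = \alpha(1+\beta)^n
\end{equation*}
when $\beta > 0$, and handle the degenerate case $\beta = 0$ separately by noting the sum contribution vanishes and $\epsilon_n \leq \alpha = \alpha(1+0)^n$ directly. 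Since $\alpha \in [0,\infty)$ and $(1+\beta)^n < \infty$, the induction simultaneously shows each $\epsilon_n$ is finite, so no issue arises from the sequence a priori taking values in $[0,\infty]$.

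To finish, I would invoke the elementary inequality $1 + \beta \leq e^\beta$ (valid for all real $\beta$, and in particular for $\beta \in [0,\infty)$), raise it to the $n$-th power to obtain $(1+\beta)^n \leq e^{\beta n}$, and multiply by $\alpha$. This chains together into $\epsilon_n \leq \alpha(1+\beta)^n \leq \alpha e^{\beta n} < \infty$, completing the proof.

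There is essentially no main obstacle: the only mild subtlety is making sure the case $\beta = 0$ is covered (where the geometric-sum formula would divide by zero) and confirming finiteness as part of the induction rather than as a separate argument. The result is classical and, as the authors note, a proof can be found in \cite[Corollary 2.2]{hutzenthaler2019overcoming}, so the write-up should be kept brief.
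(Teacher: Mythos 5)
Your induction is correct and complete: the base case, the geometric-sum computation in the inductive step (with the $\beta=0$ degeneracy handled), the finiteness bookkeeping for the $[0,\infty]$-valued sequence, and the final bound $(1+\beta)^n \leq e^{\beta n}$ are all in order. The paper itself gives no proof of \cref{cor2.2} but only cites \cite[Corollary 2.2]{hutzenthaler2019overcoming}, where the argument is exactly this standard discrete Gronwall induction, so your write-up matches the intended proof.
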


\begin{lemma} \label{l3.12}
Let $\alpha,\beta \in [0,\infty)$, $M \in (0,\infty)$, $(\epsilon_{n,k})_{n,k \in \N_0} \subseteq[0,\infty]$ satisfy for all
$n,k \in \N_0$ that
$\epsilon_{n,k} \leq
\frac{\alpha}{M^{n+k}} + \beta \sum_{l=0}^{n-1}
\frac{\epsilon_{l,k+1}}{M^{n-(l+1)}} $.
Then it holds for all $n,k \in \N_0$ that
$\epsilon_{n,k} \leq
\frac{\alpha(1+\beta)^n}{M^{n+k}} < \infty$.
\end{lemma}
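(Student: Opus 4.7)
The plan is to proceed by induction on $n \in \N_0$, with the induction hypothesis being uniform in $k \in \N_0$. That is, I will show by strong induction that for every $n \in \N_0$ and every $k \in \N_0$ the bound $\epsilon_{n,k} \leq \alpha(1+\beta)^n M^{-(n+k)}$ holds. The base case $n=0$ is immediate: the sum $\sum_{l=0}^{-1}$ is empty, so the hypothesis of the lemma directly reduces to $\epsilon_{0,k} \leq \alpha M^{-k} = \alpha(1+\beta)^0 M^{-(0+k)}$, and this is finite because $\alpha \in [0,\infty)$ and $M \in (0,\infty)$.

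For the inductive step, I would fix $n \in \N$ and $k \in \N_0$ and assume the desired bound for every $l \in \{0,1,\dots,n-1\}$ and every $k' \in \N_0$, in particular for $k' = k+1$. Substituting the inductive bound $\epsilon_{l,k+1} \leq \alpha(1+\beta)^l M^{-(l+k+1)}$ into the recursive inequality yields
\begin{equation*}
\epsilon_{n,k} \leq \frac{\alpha}{M^{n+k}} + \beta \sum_{l=0}^{n-1} \frac{\alpha (1+\beta)^l}{M^{n-(l+1)}\,M^{l+k+1}} = \frac{\alpha}{M^{n+k}}\bigg[1 + \beta \sum_{l=0}^{n-1} (1+\beta)^l\bigg],
\end{equation*}
where the key bookkeeping is that $M^{n-(l+1)} \cdot M^{l+k+1} = M^{n+k}$, which is independent of $l$ and therefore factors out of the sum. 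Thus the only non-trivial step is evaluating the finite geometric sum.

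I would treat the cases $\beta = 0$ and $\beta > 0$ separately (to avoid dividing by $\beta$). If $\beta = 0$ the bracket equals $1 = (1+\beta)^n$, and the inequality is immediate. If $\beta > 0$, the standard identity $\sum_{l=0}^{n-1}(1+\beta)^l = \beta^{-1}[(1+\beta)^n - 1]$ gives $1 + \beta \sum_{l=0}^{n-1}(1+\beta)^l = (1+\beta)^n$, closing the induction. Finiteness follows since $\alpha(1+\beta)^n M^{-(n+k)}$ is a finite product of finite quantities.

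I do not expect any serious obstacle here; the result is a textbook discrete Gronwall estimate and the proof is essentially mechanical. The only mild subtlety is that the recursion couples the index $n$ to the index $k$ (decreasing $n$ by one while increasing $k$ by one at each level), so the induction must be performed with $n$ as the primary induction variable and the statement quantified over all $k$, rather than trying to induct jointly. Once this is recognised, the $M^{n+k}$ factor lines up cleanly and the geometric summation does the rest.
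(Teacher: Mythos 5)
Your proof is correct: the induction on $n$ uniform in $k$, the cancellation $M^{n-(l+1)}M^{l+k+1}=M^{n+k}$, and the geometric-sum identity $1+\beta\sum_{l=0}^{n-1}(1+\beta)^l=(1+\beta)^n$ are exactly the standard argument; the paper does not prove \cref{l3.12} itself but defers to \cite[Lemma 3.12]{hutzenthaler2019overcoming}, whose proof proceeds in essentially the same way.
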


\subsection{On random evaluations of random fields}\label{subsectiononrando}

\begin{lemma}\label{l2.14}
Let $(\Omega, \mathcal{F})$, $(S, \mathcal{S})$,
$(E, \mathcal{E})$ be measurable spaces, let $ U = 
(U(s))_{s \in S} = (U(s,\omega))_{(s,\omega) \in S \times \Omega} \colon S \times \Omega \rightarrow E$
be $(\mathcal{S} \otimes \mathcal{F}) / \mathcal{E}$-measurable, and let $X \colon
\Omega \rightarrow S$ be  $\mathcal{F}/\mathcal{S}$-measurable. 
Then it holds that $U(X) = (U(X(\omega),\omega))_{\omega \in \Omega}$
$\colon \Omega \rightarrow E$ is
$\mathcal{F}/\mathcal{E}$-measurable.  
\end{lemma}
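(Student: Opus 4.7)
The plan is to exhibit $U(X)$ as a composition of two measurable maps. Concretely, I would define the auxiliary map $\Phi \colon \Omega \to S \times \Omega$ by $\Phi(\omega) = (X(\omega), \omega)$ for all $\omega \in \Omega$, and then observe that $U(X) = U \circ \Phi$. Since compositions of measurable maps are measurable, the lemma would follow once I establish that $\Phi$ is $\mathcal{F}/(\mathcal{S} \otimes \mathcal{F})$-measurable.

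The core step is therefore the measurability of $\Phi$. For this I would use that the product $\sigma$-algebra $\mathcal{S} \otimes \mathcal{F}$ is, by definition, generated by the measurable rectangles $\{A \times B : A \in \mathcal{S},\, B \in \mathcal{F}\}$, so it suffices to check that $\Phi^{-1}(A \times B) \in \mathcal{F}$ for every such rectangle. A direct computation gives $\Phi^{-1}(A \times B) = \{\omega \in \Omega : X(\omega) \in A\} \cap B = X^{-1}(A) \cap B$, which lies in $\mathcal{F}$ because $X$ is $\mathcal{F}/\mathcal{S}$-measurable (so $X^{-1}(A) \in \mathcal{F}$) and $B \in \mathcal{F}$. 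Hence the preimage of every generator lies in $\mathcal{F}$, and a standard $\sigma$-algebra argument (i.e., the fact that $\{C \in \mathcal{S}\otimes\mathcal{F} : \Phi^{-1}(C) \in \mathcal{F}\}$ is a $\sigma$-algebra containing the generating rectangles) yields the full $\mathcal{F}/(\mathcal{S}\otimes\mathcal{F})$-measurability of $\Phi$.

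Combining these two observations, the composition $U \circ \Phi \colon \Omega \to E$ is $\mathcal{F}/\mathcal{E}$-measurable, and for every $\omega \in \Omega$ we have $(U\circ \Phi)(\omega) = U(X(\omega), \omega)$, which is exactly $U(X)$. There is no real obstacle here: the only point requiring any care is the verification that the generating rectangles pull back to $\mathcal{F}$, which reduces to the identity $\Phi^{-1}(A \times B) = X^{-1}(A) \cap B$. The remainder of the argument is purely formal.
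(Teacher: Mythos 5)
Your proof is correct and is essentially the argument the paper relies on: the paper cites the literature for \cref{l2.14} rather than proving it, but the same device (composing $U$ with the map $\Omega \ni \omega \mapsto (X(\omega),\omega) \in S\times\Omega$, whose measurability is checked on the generating rectangles) appears verbatim in the paper's proof of \cref{l2.16new}. No gaps.
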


\begin{lemma} \label{l2.15}
Let $(\Omega, \mathcal{F}, \P)$ be a probability space, let $(S,\delta)$ be a separable metric space, let $ U = (U(s))_{s \in S}=(U(s,\omega))_{(s,\omega) \in S \times \Omega}
 \colon S \times \Omega \rightarrow [0,\infty)$ be a continuous
  random field, let $X \colon \Omega \rightarrow S$ be a random
  variable, and  assume that $U$ \!and $X$ are independent.
  Then it holds that $U(X) = (\Omega \ni \omega \mapsto U(X(\omega),\omega) \in [0,\infty))$ is $\mathcal{F}/\mathcal{B}([0,\infty))$-measurable and 
  $\E[U(X)] = \int_S \E[U(s)] (X(\P)_{\mathcal{B}(S)}) (ds)$.
\end{lemma}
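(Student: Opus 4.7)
The plan is to reduce the lemma to a joint measurability statement for $U$ and then apply Tonelli's theorem together with the independence hypothesis. First I would establish that $U \colon S \times \Omega \to [0,\infty)$ is $(\mathcal{B}(S) \otimes \mathcal{F})/\mathcal{B}([0,\infty))$-measurable, exploiting the separability of $(S,\delta)$. Fixing a countable dense sequence $(d_k)_{k \in \N} \subseteq S$, I would form for each $n \in \N$ the countable Borel partition $A_k^n := B_{2^{-n}}(d_k) \setminus \bigcup_{j < k} B_{2^{-n}}(d_j)$ of $S$, define $\varphi_n(s) := d_k$ for $s \in A_k^n$, and set $U_n(s,\omega) := U(\varphi_n(s),\omega) = \sum_{k \in \N} \mathbbm{1}_{A_k^n}(s)\,U(d_k,\omega)$. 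Each $U_n$ is jointly measurable as a countable sum of products of a Borel indicator in $s$ and a random variable in $\omega$, and since $\delta(\varphi_n(s),s) \leq 2^{-n}$ the continuity of $U(\cdot,\omega)$ yields $U_n(s,\omega) \to U(s,\omega)$ pointwise. Hence $U$ is jointly measurable as a pointwise limit of measurable maps, and measurability of $U(X)$ follows immediately from \cref{l2.14}.

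Next I would derive the expectation formula via a Fubini-type decoupling argument. Introducing the product space $(\Omega^2, \mathcal{F}^{\otimes 2}, \P^{\otimes 2})$ and the map $\Psi(\omega_1,\omega_2) := U(X(\omega_1),\omega_2)$, which is jointly measurable by the first step combined with the measurability of $X$, the independence of $U$ and $X$ entails that the joint law of $(X(\cdot), U(\cdot,\cdot))$ under $\P$ coincides with the joint law of $(X \circ \pi_1, U(\cdot,\pi_2))$ under $\P^{\otimes 2}$. Evaluating at the measurable functional $(s,u) \mapsto u(s)$ gives $\E[U(X)] = \int_{\Omega^2} \Psi\, d\P^{\otimes 2}$, and Tonelli then produces
\begin{equation*}
\int_{\Omega^2} \Psi\, d\P^{\otimes 2} = \int_\Omega \left( \int_\Omega U(X(\omega_1),\omega_2)\, \P(d\omega_2) \right) \P(d\omega_1) = \int_\Omega \E[U(X(\omega_1))]\, \P(d\omega_1) = \int_S \E[U(s)]\, X(\P)(ds),
\end{equation*}
where the final equality is the change-of-variables formula for the push-forward measure $X(\P)$.

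The main obstacle is the careful formulation of the decoupling in the second step: one must justify that independence of the random field $U$ and the random variable $X$ implies the claimed equality of joint distributions. Here the separability of $S$ together with the continuity of $U$ are essential, because they ensure that $U(\cdot,\omega)$ is effectively determined by its values on a countable dense subset of $S$, so that the cylinder $\sigma$-algebra on $[0,\infty)^S$ captures the relevant information and makes the functional $(s,u) \mapsto u(s)$ measurable. The first step is largely routine; the bookkeeping with the independence hypothesis in the second step is where one has to be most careful.
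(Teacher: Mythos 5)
Your argument is correct: the joint measurability of $U$ via the countable partition induced by a dense sequence, the decoupling of $U$ and $X$ onto the product space $(\Omega^2,\mathcal{F}^{\otimes 2},\P^{\otimes 2})$ (justified, as you note, because continuity plus separability make $U$ determined by countably many evaluations, so the evaluation functional is measurable on the trace of the cylinder $\sigma$-algebra over continuous paths), and the concluding Tonelli/push-forward computation are exactly the standard route. The paper itself gives no proof of this lemma but defers to \cite[Lemma 2.3]{hutzenthaler2018overcoming}, whose argument is essentially the one you propose, so there is nothing to add.
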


\begin{lemma} \label{l2.16new}
Let $d \in \N$, let $(\Omega, \mathcal{F}, \P)$ be a probability space,
let $ \left\|\cdot\right\|\! \colon \R^d \to
[0,\infty)$ 
be a norm on $\R^d$,
let $(S,\delta)$ be a separable metric space,
let $
U = (U(s))_{s \in S}
=(U(s,\omega))_{(s,\omega) \in S \times \Omega}
\colon S \times \Omega \rightarrow \R^d$
be a continuous
random field, let $X \colon \Omega \rightarrow S$ be a random variable, assume that $U$ and $X$ are independent,
and assume that $\int_S \E[\| U(s) \|](X(\P)_{\mathcal{B}(S)})(ds) < \infty$.
 Then 
 \begin{enumerate}[(i)]
 \item\label{it:l2.16new:item1} it holds that 
 $U(X) = ( \Omega \ni \omega \mapsto U(X(\omega), \omega ) \in \R^d)$ is $\mathcal{F}/\mathcal{B}(\R^d)$-measurable,
 \item\label{it:l2.16new:item2} it holds that $(X(\P)_{\mathcal{B}(S)})
 (\lbrace s \in S \colon \E[\| U(s) \|] = \infty  \rbrace)=0$, 
 and
 \item\label{it:l2.16new:item3} it holds that $\E[\| U(X) \|] < \infty$
  and 
	$ \E[U(X)] = \int_S \E[U(s)] (X(\P)_{\mathcal{B}(S)}) (ds) $.
  \end{enumerate}
\end{lemma}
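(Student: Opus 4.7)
The plan is to reduce all three claims to the scalar non-negative \cref{l2.15}. Since $U$ is a continuous random field on the separable metric space $S$, a standard approximation argument (already implicit in \cref{l2.15}) shows that $U$ is $(\mathcal{B}(S) \otimes \mathcal{F})/\mathcal{B}(\R^d)$-measurable. Granted this joint measurability, item (i) is immediate from \cref{l2.14} applied with $E = \R^d$.

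For item (ii) and the integrability half of item (iii), I would apply \cref{l2.15} to the scalar continuous non-negative random field $(s,\omega) \mapsto \|U(s,\omega)\|$. This yields
\[
\E[\|U(X)\|] \;=\; \int_S \E[\|U(s)\|] \, (X(\P)_{\mathcal{B}(S)})(ds) \;<\; \infty,
\]
where the inequality is precisely the standing hypothesis. Item (ii) then follows because any non-negative Borel function whose integral against $X(\P)_{\mathcal{B}(S)}$ is finite must be $X(\P)_{\mathcal{B}(S)}$-almost surely finite, and here that function is exactly $s \mapsto \E[\|U(s)\|]$.

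For the identity in item (iii), the plan is to argue coordinatewise. For each $i \in \{1, \ldots, d\}$ let $e_i$ be the $i$-th standard basis vector of $\R^d$ and set $U_i := \langle U, e_i \rangle$; then $U_i$ and its positive and negative parts $U_i^+, U_i^-$ are all continuous scalar random fields on $S \times \Omega$. By equivalence of norms on $\R^d$ there exists $c \in (0,\infty)$ with $|U_i(s,\omega)| \le c\|U(s,\omega)\|$ for every $(s,\omega)$, so each of the non-negative fields $U_i^\pm$ satisfies the integrability hypothesis of \cref{l2.15} against $X(\P)_{\mathcal{B}(S)}$. Applying \cref{l2.15} separately to $U_i^+$ and $U_i^-$, subtracting, and then assembling the resulting $d$ coordinate identities produces the vector identity $\E[U(X)] = \int_S \E[U(s)] (X(\P)_{\mathcal{B}(S)})(ds)$.

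The proof is essentially bookkeeping, and the only point demanding any care is the positive/negative part decomposition: the finiteness $\int_S \E[\|U(s)\|] (X(\P)_{\mathcal{B}(S)})(ds) < \infty$ is precisely what prevents any $\infty - \infty$ cancellation when subtracting the two scalar instances of \cref{l2.15}, and it also guarantees via item (ii) that the integrand in the vector identity is defined $X(\P)_{\mathcal{B}(S)}$-almost everywhere. I expect no serious technical obstacle beyond this.
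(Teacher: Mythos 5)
Your proposal is correct and follows essentially the same route as the paper's proof: item (i) via joint measurability of the continuous random field combined with \cref{l2.14}, item (ii) and the integrability in item (iii) by applying \cref{l2.15} to the nonnegative field $\|U(\cdot)\|$, and the vector identity by a coordinatewise positive/negative-part decomposition controlled through norm equivalence on $\R^d$. The only cosmetic difference is that the paper writes out the composition argument for item (i) explicitly rather than citing \cref{l2.14}.
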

\begin{proof}[Proof of \cref{l2.16new}]
Throughout this proof
let $ \vertiii \cdot  \colon \R^d \to [0,\infty) $ be the standard norm on $\R^d$,
let $Z \colon \Omega \rightarrow \Omega$ satisfy for all $\omega \in \Omega$ that 
$Z(\omega)= \omega$,
let $u_k=(u_k(s))_{s \in S}=(u_k(s,\omega))_{(s, \omega) \in S \times \Omega}
\colon S \times \Omega \rightarrow \R$, $k \in \lbrace 1,2, \dots , d \rbrace$, satisfy for all $s \in S$, 
 $\omega \in \Omega$ that 
$ (u_1(s,\omega),u_2(s,\omega), \dots , u_d(s,\omega)) = U(s,\omega)$, 
and let $\mathbbmtt{u}\hspace{.04cm}_i \colon S \times \Omega \rightarrow [0,\infty)$, $i \in \lbrace 1,2, \dots , d \rbrace$, and $\mathfrak{u}\hspace{.01cm}_i \colon S \times \Omega \rightarrow [0,\infty)$, $i \in \lbrace 1,2, \dots , d \rbrace$, satisfy for all $s \in S$,
$\omega \in \Omega$, $i \in \lbrace 1,2, \dots , d \rbrace$ that
$\mathbbmtt{u}\hspace{.04cm}_i(s,\omega) = \max \lbrace u_i(s,\omega),0 \rbrace$ 
and 
$\mathfrak{u}\hspace{.01cm}_i(s,\omega) = \max \lbrace -u_i(s,\omega),0 \rbrace$.
Observe that for all $i \in \lbrace 1,2, \dots , d \rbrace$ it holds that $u_i = \mathbbmtt{u}\hspace{.04cm}_i - \mathfrak{u}\hspace{.01cm}_i$.
Moreover, note that the hypothesis that $U$ is a continuous
random field assures that $U$ is $\big(\mathcal{B}(S) 
\otimes \sigma_\Omega(\lbrace U(s)\colon s \in S\rbrace)\big) / \mathcal{B}(\R^d)$-measurable.
Next note that the fact that $\sigma_\Omega(\lbrace U(s)\colon s \in S\rbrace) \subseteq
\mathcal{F}$ ensures that $Z$ is $\mathcal{F}/\sigma_\Omega(\lbrace U(s)\colon s \in S\rbrace)$-measurable.
The hypothesis that $X$ is
$\mathcal{F}/\mathcal{B}(S)$-measurable hence proves that $\Omega \ni \omega \mapsto (X(\omega), Z(\omega))$ $=
(X(\omega),\omega) \in S \times\Omega$ is $\mathcal{F}/ \big(\mathcal{B}(S) \otimes \sigma_\Omega(\lbrace U(s)\colon s \in S\rbrace) \big)$-measurable. Combining this with the fact that $U$ is 
$\big(\mathcal{B}(S) 
\otimes \sigma_\Omega(\lbrace U(s)\colon s \in S\rbrace)\big) / \mathcal{B}(\R^d)$-measurable establishes item~\eqref{it:l2.16new:item1}.
Furthermore, observe that the hypothesis that
$\int_S \E[\| U(s)\|] (X(\P)_{\mathcal{B}(S)})$ $(ds)< \infty$ ensures that
\begin{equation}
   (X(\P)_{\mathcal{B}(S)})
 (\lbrace s \in S \colon \E[\| U(s) \|] = \infty  \rbrace)=0. 
\end{equation}
This establishes item~\eqref{it:l2.16new:item2}.
Moreover, note that \cref{l2.15} and the hypothesis that  
$\int_S \E[\| U(s)\|] (X(\P)_{\mathcal{B}(S)})(ds) < \infty$
assure that
\begin{equation}\label{meannormfinite}
    \E[\| U(X) \|] = \int_S \E[\| U(s) \|] (X(\P)_{\mathcal{B}(S)})(ds) < \infty.
\end{equation}
Next let $a \in (0, \infty)$ satisfy that
\begin{equation}\label{akreys}
    \vertiii{ U(X)} \leq a \| U(X) \|
\end{equation}
(cf., e.g.,
Kreyszig 
\cite[Theorem 2.4-5]{MR0467220}).
Observe that \eqref{akreys}, \eqref{meannormfinite}, the fact that for all $i \in \lbrace 1,2, \dots , d \rbrace$ it holds that $|u_i| = \mathbbmtt{u}\hspace{.04cm}_i + \mathfrak{u}\hspace{.01cm}_i$, and the fact that for all $i \in \lbrace 1,2, \dots , d \rbrace$ it holds that
$|u_i(X)| \leq \vertiii{ U(X)}$ imply that for all $i \in \lbrace 1,2, \dots , d \rbrace$ it holds that
\begin{equation}\label{newforHutz}
  \E[\mathbbmtt{u}\hspace{.04cm}_i(X)] + \E[\mathfrak{u}\hspace{.01cm}_i(X)]
  = \E[|u_i(X)|] \leq \E[ \vertiii{U(X)} ]
  \leq a \E[\| U(X) \|] < \infty.
\end{equation}
This and the hypothesis that  
$\int_S \E[\| U(s)\|] (X(\P)_{\mathcal{B}(S)})(ds) < \infty$ ensure that for all $i \in \lbrace 1,2, \dots , d \rbrace$ it holds that
\begin{equation}
    \int_S (\E[ \mathbbmtt{u}\hspace{.04cm}_i(s)
    + \mathfrak{u}\hspace{.01cm}_i(s)])
    (X(\P)_{\mathcal{B}(S)})(ds)
    \leq a \int_S (\E[\|U(s)\|] )
    (X(\P)_{\mathcal{B}(S)})(ds) < \infty.
\end{equation}
\cref{l2.15}, \eqref{newforHutz}, and the fact that for all $i \in \lbrace 1,2, \dots , d \rbrace$ it holds that  $u_i = \mathbbmtt{u}\hspace{.04cm}_i - \mathfrak{u}\hspace{.01cm}_i$ hence demonstrate that for all $i \in \lbrace 1,2, \dots , d \rbrace$ it holds that
\begin{equation}
\begin{split}
    \E[u_i(X)] 
    &= \E[\mathbbmtt{u}\hspace{.04cm}_i(X) - \mathfrak{u}\hspace{.01cm}_i(X)] =\E[\mathbbmtt{u}\hspace{.04cm}_i(X)] -  \E[\mathfrak{u}\hspace{.01cm}_i(X)]
    = \int_S \E[ \mathbbmtt{u}\hspace{.04cm}_i(s)] (X(\P)_{\mathcal{B}(S)})(ds)
    - \int_S \E[ \mathfrak{u}\hspace{.01cm}_i(s)] (X(\P)_{\mathcal{B}(S)})(ds)
    \\
    & = \int_S (\E[ \mathbbmtt{u}\hspace{.04cm}_i(s)]
    -\E[ \mathfrak{u}\hspace{.01cm}_i(s)])
    (X(\P)_{\mathcal{B}(S)})(ds)
     = \int_S \E[ u_i(s)] (X(\P)_{\mathcal{B}(S)})(ds).
    \end{split}
\end{equation}
This implies that 
\begin{equation}
      \E[U(X)] = \int_S \E[U(s)] (X(\P)_{\mathcal{B}(S)})
      (ds).
  \end{equation}
  Combining this and \eqref{meannormfinite} establishes
  item~\eqref{it:l2.16new:item3}.
  The proof of \cref{l2.16new} is thus completed.
\end{proof}

\subsection{A priori bounds for solutions of ordinary differential equations}\label{subsectionapriori}

\begin{lemma} \label{l3.3}
Assume \cref{setting}.
Then it holds for all $t \in [0,T]$ that 
\begin{equation}\label{15stat}
 \| X(t) - \xi \| \leq  T
 \big( \E  \big[  \| F(\xi, Z^0) \|^2 \big] \big)^{1/2} e^{L T}.
\end{equation}
\end{lemma}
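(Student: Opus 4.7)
The plan is to bootstrap from the integral equation \eqref{sol} via triangle, Jensen, Minkowski, and Gronwall inequalities. Since $X$ is a deterministic continuous curve in $\R^d$, all the randomness in the right-hand side comes only from $Z^0$, which will be crucial when I invoke Minkowski.

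First I would start from \eqref{sol} and take norms to obtain, for every $t\in[0,T]$,
\begin{equation*}
\|X(t)-\xi\|\;\le\;\int_0^t\bigl\|\E[F(X(r),Z^0)]\bigr\|\,dr\;\le\;\int_0^t \E\bigl[\|F(X(r),Z^0)\|\bigr]\,dr,
\end{equation*}
where the second inequality is Jensen's inequality applied to the convex map $\|\cdot\|$ (and implicitly uses that Bochner expectation commutes with the norm in the stated way). Then I would apply Jensen again, or equivalently the $L^1$--$L^2$ inequality, to upgrade to an $L^2$ bound
\begin{equation*}
\E\bigl[\|F(X(r),Z^0)\|\bigr]\;\le\;\bigl(\E\bigl[\|F(X(r),Z^0)\|^2\bigr]\bigr)^{1/2}.
\end{equation*}

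Next I would decompose $F(X(r),Z^0)=(F(X(r),Z^0)-F(\xi,Z^0))+F(\xi,Z^0)$, use the Minkowski (triangle) inequality in $L^2(\Omega)$, and then invoke the Lipschitz bound \eqref{lipsch}, together with the fact that $X(r)$ is non-random, to obtain
\begin{equation*}
\bigl(\E\bigl[\|F(X(r),Z^0)\|^2\bigr]\bigr)^{1/2}\;\le\;L\,\|X(r)-\xi\|+\bigl(\E\bigl[\|F(\xi,Z^0)\|^2\bigr]\bigr)^{1/2}.
\end{equation*}
Setting $c:=(\E[\|F(\xi,Z^0)\|^2])^{1/2}\in[0,\infty)$ (finite by the standing assumption) and combining the displays then yields the scalar integral inequality
\begin{equation*}
\|X(t)-\xi\|\;\le\;c\,t+L\int_0^t \|X(r)-\xi\|\,dr\qquad\text{for all } t\in[0,T].
\end{equation*}

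Finally I would close the argument with the standard continuous Gronwall inequality (with the nondecreasing forcing $\alpha(t)=ct$), which gives $\|X(t)-\xi\|\le ct\,e^{Lt}\le Tc\,e^{LT}$, matching \eqref{15stat}. The time-discrete Gronwall lemmas \cref{cor2.2} and \cref{l3.12} of the paper are aimed at the MLP recursion, so I would use the continuous version here; alternatively, a Picard-iteration argument on $\|X(t)-\xi\|$ combined with \cref{cor2.2} applied to the iterates would work. I do not foresee a real obstacle: the only mildly delicate point is to justify pulling $X(r)-\xi$ out of the $L^2$-norm in the Minkowski step, which is immediate because $X(r)\in\R^d$ is deterministic, so the argument is essentially bookkeeping of standard inequalities.
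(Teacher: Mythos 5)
Your proposal is correct and follows essentially the same route as the paper: both reduce to the scalar integral inequality $\|X(t)-\xi\|\le T\big(\E[\|F(\xi,Z^0)\|^2]\big)^{1/2}+L\int_0^t\|X(r)-\xi\|\,dr$ by splitting $F$ around $\xi$, using the Lipschitz condition and Jensen's inequality, and then conclude with the continuous Gronwall inequality (the paper applies the pointwise triangle inequality to $\|F(x,s)\|$ before taking expectations, whereas you pass through $L^2(\Omega)$ and Minkowski first --- an immaterial reordering). The only corrections needed are that \cref{setting} does not actually assume $\E[\|F(\xi,Z^0)\|^2]<\infty$, so you should add the paper's opening reduction (assume w.l.o.g.\ that this quantity is finite, since otherwise \eqref{15stat} is trivial), and you should note, as the paper does, that $\int_0^T\|X(s)-\xi\|\,ds<\infty$ by continuity of $X$ so that Gronwall is applicable.
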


\begin{proof}[Proof of \cref{l3.3}]
Throughout this proof assume without loss of generality (w.l.o.g.) that  $ \E[\| F(\xi, Z^0) \|^2] $ $<\infty$. 
Observe that \eqref{sol} ensures that for all $t \in [0,T]$ it holds that
\begin{equation} \label{eq1l}
 \| X(t) - \xi \| =  \left\| \int_0^t {{\E}\big[ F (  X(s),
 Z^0 )  \big]}\,  ds \right\| \leq   \int_0^t { \left\| \E\big[ F (  X(s), Z^0)  \big] \right\|}\,  ds\\
 \leq 
  \int_0^t {{\E}\big[ \| F(  X(s), Z^0 )  \| \big]}\, ds.
\end{equation}
In addition, observe that the hypothesis that $ X \in \mathcal{C}([0,T], {\R}^d)$ ensures that
\begin{equation}\label{pergron}
    \int_0^T \|X(s) -\xi \| \, ds < \infty.
\end{equation}
Moreover, note that \eqref{lipsch} and the triangle inequality ensure that for all  $x \in {\R}^d$, $s \in S$ it holds that
\begin{equation} 
\| F(x,s) \|  \leq \| F(\xi,s) \| + \| F(x,s) -F(\xi,s) \| \leq 
 \| F(\xi,s) \| + L \| x-\xi \|.
\end{equation}
This, \eqref{eq1l}, and Jensen's inequality 
 ensure that for all $t \in [0,T]$ it holds that
\begin{equation}
\begin{split}
\| X(t) - \xi \| 
& \leq   \int_0^t {  \E\big[\| F (  X(s), Z^0 ) \| \big] }\, ds 
\leq   \int_0^t {\E} \big[ \|  F(\xi,Z^0) \| + L \|X(s)-\xi \|  \big]\,ds  \\
& = \int_0^t \E \big[ \| F(\xi,Z^0) \|\big]\, ds  
 + L\! \int_0^t  \| X(s) - \xi \| \,ds 
 \leq T \E \big[  \| F(\xi,Z^0) \|  \big]  
 + L\! \int_0^t  \| X(s) - \xi \| \, ds\\
 &= T \big(\big(\E \big[  \| F(\xi,Z^0) \|  \big] \big)^2 \big)^{1/2} 
 + L\! \int_0^t  \| X(s) - \xi \| \,ds 
 \leq T {\big(\E \big[  \| F(\xi,Z^0) \| ^2 \big] \big)}^{1/2} 
 + L\!   \int_0^t  \| X(s) - \xi \| \, ds.
 \end{split}
\end{equation}
Combining this and \eqref{pergron} with Gronwall's integral inequality implies that for all $t \in [0,T]$ it holds that
\begin{equation}
 \| X(t) - \xi \|   \leq T {\big(\E \big[  \| F(\xi,Z^0) \| ^2 \big] \big)}^{1/2} e^{   L t }
   \leq T {\big(\E \big[\| F(\xi,Z^0) \|^2 \big] \big) }^{1/2} e^{ L T}.
\end{equation}
This establishes \eqref{15stat}.
The proof of \cref{l3.3} is thus completed.
\end{proof}

\subsection{Properties of MLP approximations}\label{subsectionproperties}

\begin{lemma}\label{l3.4}
Let $d,N \in \N$, let $(\Omega, \mathcal{F}, \P)$ be a probability space, let $X_k \colon \Omega \rightarrow \R^d$, $k \in \lbrace1,2, \dots ,N \rbrace$, be independent random variables, let
$Y_k \colon \Omega \rightarrow \R^d$, $k \in \lbrace1,2, \dots , N \rbrace$, be independent random variables, and assume for every
$k \in \lbrace1,2, \dots , $ $N \rbrace$ that $X_k$ and $Y_k$ are identically distributed.
Then it holds that $\big( \sum_{k=1}^N X_k \big) \colon \Omega \rightarrow \R^d$ and $\big( \sum_{k=1}^N Y_k \big) \colon \Omega \rightarrow \R^d$ are identically distributed random variables.
\end{lemma}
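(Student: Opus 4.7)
The plan is to deduce the equality of the distributions of $\sum_{k=1}^N X_k$ and $\sum_{k=1}^N Y_k$ from the equality of the joint laws of the vectors $(X_1,\dots,X_N)$ and $(Y_1,\dots,Y_N)$. The key observation is that independence of $X_1,\dots,X_N$ implies that the pushforward measure $(X_1,\dots,X_N)(\P)_{\mathcal{B}((\R^d)^N)}$ equals the product measure $\bigotimes_{k=1}^N X_k(\P)_{\mathcal{B}(\R^d)}$, and analogously for the $Y_k$'s.

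First I would note that for every $k \in \{1,2,\dots,N\}$ the hypothesis that $X_k$ and $Y_k$ are identically distributed says precisely $X_k(\P)_{\mathcal{B}(\R^d)} = Y_k(\P)_{\mathcal{B}(\R^d)}$. Combining this with the two product-measure identities above yields
\begin{equation}
(X_1,\dots,X_N)(\P)_{\mathcal{B}((\R^d)^N)} = \bigotimes_{k=1}^N X_k(\P)_{\mathcal{B}(\R^d)} = \bigotimes_{k=1}^N Y_k(\P)_{\mathcal{B}(\R^d)} = (Y_1,\dots,Y_N)(\P)_{\mathcal{B}((\R^d)^N)}.
\end{equation}

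Next I would introduce the (continuous, hence Borel-measurable) summation map $\Sigma \colon (\R^d)^N \to \R^d$ defined by $\Sigma(x_1,\dots,x_N)=\sum_{k=1}^N x_k$, and observe that $\sum_{k=1}^N X_k = \Sigma \circ (X_1,\dots,X_N)$ and $\sum_{k=1}^N Y_k = \Sigma \circ (Y_1,\dots,Y_N)$. Applying $\Sigma$ as a pushforward to both sides of the displayed equality yields, for every $B \in \mathcal{B}(\R^d)$, that
\begin{equation}
\P\!\left(\textstyle\sum_{k=1}^N X_k \in B\right) = (X_1,\dots,X_N)(\P)(\Sigma^{-1}(B)) = (Y_1,\dots,Y_N)(\P)(\Sigma^{-1}(B)) = \P\!\left(\textstyle\sum_{k=1}^N Y_k \in B\right),
\end{equation}
which is the desired identity of laws.

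I do not anticipate genuine obstacles here, since every ingredient (independence $\Leftrightarrow$ joint law equals the product of marginals, functoriality of pushforwards under composition, Borel-measurability of addition) is entirely standard. The only minor care is in spelling out that the pushforward of a product measure through a measurable map coincides with the law of the corresponding composition, which is immediate from the definition of the pushforward.
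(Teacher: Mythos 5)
Your proof is correct: reducing to the equality of the joint laws via the product-measure characterization of independence, and then pushing forward through the continuous summation map, is a complete and rigorous argument. The paper itself does not prove \cref{l3.4} but cites an external reference for it, and your argument is exactly the standard one such a reference would supply; no gaps remain.
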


\begin{lemma} \label{l3.5}
Let $(\Omega, \mathcal{F}, \P)$ be a probability space, let $(S,\delta)$ be a separable metric space, let $(E,\mathfrak{d})$ be a metric space, let    $U,V \colon S\times \Omega \rightarrow E$  be
 continuous random fields, let $X,Y \colon \Omega \rightarrow S$ be random variables, assume that $U$ and $X$ are independent, assume that\   $V$ and $Y$ are independent, assume for all $ s \in S$ that  $U(s)$ and $V(s)$ are identically distributed, and assume that $X$ and $Y$ are identically distributed. Then it holds that $U(X)=(U(X(\omega),\omega))_{\omega \in \Omega} \colon \Omega \rightarrow E$ and 
 $V(Y)=(V(Y(\omega),\omega))_{\omega \in \Omega} \colon \Omega \rightarrow E$ are identically distributed random variables.
\end{lemma}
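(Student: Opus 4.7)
The plan is to reduce the claim to showing $\P(U(X) \in A) = \P(V(Y) \in A)$ for every Borel set $A \in \mathcal{B}(E)$, which is the very definition of $U(X)$ and $V(Y)$ being identically distributed $E$-valued random variables.

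First I would verify that $U(X)$ and $V(Y)$ are $\mathcal{F}/\mathcal{B}(E)$-measurable. Exactly as in the proof of item~\eqref{it:l2.16new:item1} of \cref{l2.16new}, the continuity of the random field $U$ together with the separability of $(S,\delta)$ yields that $U$ is $(\mathcal{B}(S) \otimes \sigma_\Omega(\{U(s) : s \in S\}))/\mathcal{B}(E)$-measurable, and \cref{l2.14} then ensures that $U(X)$ is $\mathcal{F}/\mathcal{B}(E)$-measurable; the same reasoning applies to $V(Y)$.

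Next I would establish the disintegration identity
\begin{equation}
\P(U(X) \in A) = \int_S \P(U(s) \in A)\,(X(\P)_{\mathcal{B}(S)})(ds),
\end{equation}
together with the analogous identity for $V(Y)$ with $Y(\P)_{\mathcal{B}(S)}$ in place of $X(\P)_{\mathcal{B}(S)}$. For this I would observe that $B_A := \{(s,\omega) \in S \times \Omega : U(s,\omega) \in A\}$ lies in $\mathcal{B}(S) \otimes \sigma_\Omega(\{U(s) : s \in S\})$, and that the independence of $U$ and $X$ is equivalent to the statement that the pushforward of the map $\Omega \ni \omega \mapsto (X(\omega),\omega) \in S \times \Omega$ onto the measurable space $(S \times \Omega, \mathcal{B}(S) \otimes \sigma_\Omega(\{U(s) : s \in S\}))$ coincides with the product measure $(X(\P)_{\mathcal{B}(S)}) \otimes \P|_{\sigma_\Omega(\{U(s) : s \in S\})}$; Fubini--Tonelli applied to $\mathbbm{1}_{B_A}$ against this product measure then delivers the displayed formula.

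Finally, I would combine the two equidistribution hypotheses: $U(s)$ and $V(s)$ being identically distributed for every $s \in S$ gives $\P(U(s) \in A) = \P(V(s) \in A)$, while $X$ and $Y$ being identically distributed gives $X(\P)_{\mathcal{B}(S)} = Y(\P)_{\mathcal{B}(S)}$, so inserting these equalities into the two disintegration formulas yields the desired equality $\P(U(X) \in A) = \P(V(Y) \in A)$. The main obstacle I anticipate is precisely the Fubini step: independence of a random field and a random variable is phrased through sub-$\sigma$-algebras rather than an explicit product law, so some care is needed to identify the joint distribution of $\omega \mapsto (X(\omega),\omega)$ on the appropriate product $\sigma$-algebra as an actual product measure before the exchange of integrals against $\mathbbm{1}_{B_A}$ becomes legitimate.
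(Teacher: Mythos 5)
Your argument is correct. Note that the paper does not actually prove \cref{l3.5}; it only cites \cite[Lemma 3.5]{hutzenthaler2019overcoming}, and your proof is essentially the standard argument given there: joint measurability of $U$ with respect to $\mathcal{B}(S)\otimes\sigma_\Omega(\lbrace U(s)\colon s\in S\rbrace)$ from continuity plus separability, identification of the law of $\omega\mapsto(X(\omega),\omega)$ on that product $\sigma$-algebra with $(X(\P)_{\mathcal{B}(S)})\otimes\P$ via a $\pi$-system/Dynkin argument on rectangles, and then Tonelli applied to $\mathbbm{1}_{B_A}$ to get $\P(U(X)\in A)=\int_S\P(U(s)\in A)\,(X(\P)_{\mathcal{B}(S)})(ds)$, into which the two equidistribution hypotheses are inserted. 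The Fubini step you flag as the main obstacle is handled exactly as you describe and poses no difficulty, since $B_A$ lies in the product $\sigma$-algebra and both factors are probability measures; a mild shortcut within the paper's own toolkit would be to apply \cref{l2.15} to the continuous nonnegative random field $\varphi\circ U$ for bounded continuous $\varphi\colon E\to[0,\infty)$, which yields the same disintegration identity for $\E[\varphi(U(X))]$ without redoing the product-measure argument for indicators, and such test functions determine the law on a metric space.
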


\begin{lemma}[Properties of MLP approximations]
\label{propMLP}   
Assume \cref{setting} and let $m \in \N$. Then
\begin{enumerate}[(i)]
\item\label{it:propMLP:item1} for all $\theta \in \Theta$, $n \in \N_0$ it holds that
		${\mathcal{X}}_{n,m}^\theta \colon [0,T]\times\Omega \rightarrow {\R}^d $ is a stochastic process with continuous sample paths,
\item\label{it:propMLP:item2} for all $\theta \in \Theta$, $n \in \N_0$ it holds that
		${\mathcal{X}}_{n,m}^\theta $ is 
		$\big( \mathcal{B}([0,T])\otimes\sigma_\Omega
		({(\mathfrak{r}^{(\theta,\vartheta)})}_{\vartheta \in \Theta},{(Z^{(\theta,\vartheta)})}_{\vartheta \in \Theta})\big)$ $/\mathcal{B}(\R^d)$-measurable,
\item\label{it:propMLP:item3} for all  $\theta \in \Theta$, $n \in \N$, $t \in [0,T]$
it holds that
\begin{equation}
\begin{split}
 \left((\N_0\cap[0,n-1])\times\N\right) \ni (l,k)
  \mapsto {\begin{cases}
    F\big(\xi, Z^{(\theta,0,k)}\big)
  & \colon l = 0
  \\
    F\big( \mathcal{X}_{l,m}^{(\theta,l,k)} (\mathcal{R}_{t}^{(\theta,l,k)}), Z^{(\theta,l,k)} \big)
     - F\big( \mathcal{X}_{l-1,m}^{(\theta,l,-k)}(\mathcal{R}_{t}^{(\theta,l,k)}), Z^{(\theta,l,k)} \big)
  & \colon l > 0     
  \end{cases}}
\end{split}
\end{equation}
is an independent family of random variables, 
and
\item\label{it:propMLP:item4} for all $n \in \N_0$, $t \in [0,T]$ it holds that
$\Omega \ni \omega \mapsto \mathcal{X}_{n,m}^\theta (t, \omega)
\in \R^d$, $\theta \in \Theta$,
are identically distributed random variables.
\end{enumerate}
\end{lemma}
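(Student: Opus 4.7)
The plan is to proceed by strong induction on $n \in \N_0$, establishing all four items simultaneously at each step. In the base case $n = 0$, the recursion \eqref{numsol} reduces to $\mathcal{X}_{0,m}^{\theta}(t) = \xi$ because both the $\sum_{l=1}^{n-1}$ block and the indicator $\mathbbm{1}_{\N}(n)$ vanish; this constant process is continuous, Borel-measurable, trivially satisfies the identical-distribution claim in \eqref{it:propMLP:item4}, and makes the independence assertion in \eqref{it:propMLP:item3} vacuous. For the inductive step at level $n \in \N$, I would assume \eqref{it:propMLP:item1}--\eqref{it:propMLP:item4} for all $l \in \{0, 1, \ldots, n-1\}$ and every $\theta \in \Theta$, and then read \eqref{numsol} as a finite sum of blocks indexed by $(l, k)$ with $l \in \{0, \ldots, n-1\}$.

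For item~\eqref{it:propMLP:item1}, the continuity of sample paths follows since $t \mapsto \mathcal{R}_t^{(\theta,l,k)} = \mathfrak{r}^{(\theta,l,k)} t$ is continuous, $\mathcal{X}_{l,m}^{(\theta,l,k)}$ and $\mathcal{X}_{l-1,m}^{(\theta,l,-k)}$ have continuous paths by the inductive hypothesis, and $F(\cdot, s)$ is Lipschitz (hence continuous) in its first argument by \eqref{lipsch}; composition and finite summation preserve pathwise continuity. For item~\eqref{it:propMLP:item2}, I would invoke \cref{l2.14} applied to the random field $(t, \omega) \mapsto \mathcal{X}_{l,m}^{(\theta,l,k)}(t, \omega)$, which is jointly measurable by the inductive hypothesis, composed with the measurable map $(t, \omega) \mapsto (\mathcal{R}_t^{(\theta,l,k)}(\omega), \omega)$, and then compose with $F$. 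Tracking sigma-algebras, each summand at level $l$ depends only on random variables whose indices lie in the sub-tree rooted at $(\theta, l, k)$ or $(\theta, l, -k)$, so the full expression is measurable with respect to $\sigma_\Omega({(\mathfrak{r}^{(\theta,\vartheta)})}_{\vartheta \in \Theta}, {(Z^{(\theta,\vartheta)})}_{\vartheta \in \Theta})$ as required.

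For item~\eqref{it:propMLP:item3}, the crucial observation is that the multi-indices $(\theta, l, k)$ and $(\theta, l, -k)$ appearing across distinct pairs $(l, k)$ have disjoint descendant sub-trees in $\Theta$; consequently, by the inductive measurability statement, the summands are functions of disjoint sub-families of the globally independent family ${(\mathfrak{r}^{\vartheta}, Z^{\vartheta})}_{\vartheta \in \Theta}$, yielding independence. Item~\eqref{it:propMLP:item4} follows by combining \cref{l3.4} (to compare sums of independent summands across different $\theta$) with \cref{l3.5} (to compare random evaluations of random fields): the inductive hypothesis provides that $\mathcal{X}_{l,m}^{(\theta,l,k)}$ and $\mathcal{X}_{l,m}^{(\theta',l,k)}$ are equidistributed as random fields and independent of the outermost $\mathfrak{r}^{(\theta,l,k)}$, respectively $\mathfrak{r}^{(\theta',l,k)}$, which have the same law; composing with $F(\cdot, Z^{(\theta,l,k)})$ and $F(\cdot, Z^{(\theta',l,k)})$ preserves equality in distribution. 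I expect the main obstacle to be the bookkeeping of sigma-algebras and disjoint sub-trees of $\Theta$ needed to rigorously justify independence and the applicability of \cref{l2.14} and \cref{l3.5} in the inductive step; once this combinatorial indexing is spelled out, the analytic content (continuity, measurability, and distributional identity) is routine.
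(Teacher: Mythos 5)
Your proposal is correct and follows essentially the same route as the paper's proof: induction on $n$ with the constant base case, \cref{l2.14} plus sigma-algebra bookkeeping over disjoint index families for measurability and independence, and \cref{l3.4} together with \cref{l3.5} for the identical-distribution claim. The only cosmetic difference is that you run one simultaneous strong induction over all four items, whereas the paper proves items~(i), (ii), and (iv) by separate inductions and derives item~(iii) directly from item~(ii) and the independence hypotheses; the substance is the same.
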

\begin{proof}[Proof of \cref{propMLP}]
We first prove item~\eqref{it:propMLP:item1} by induction on $n \in \N_0$.
For the base case $n=0$ observe that the hypothesis that for all $\theta \in \Theta$, $t \in [0,T]$ it holds that 
${\mathcal{X}}_{0,m}^\theta(t) = \xi$ demonstrates that for all $\theta \in \Theta$ it holds that  
${\mathcal{X}}_{0,m}^\theta \colon [0,T] \times\Omega \rightarrow {\R}^d $ is a stochastic process with continuous sample paths.
This establishes item~\eqref{it:propMLP:item1} in the base case $n=0$.
For the induction step $\N_0 \ni (n-1) \rightarrow n \in \N$ let $n \in \N$ and assume that for every
$j \in \N_0 \cap [0,n)$, $\theta \in \Theta$ it holds
that ${\mathcal{X}}_{j,m}^\theta \colon [0,T] \times\Omega \rightarrow {\R}^d $ is a stochastic process with continuous sample paths.
Observe that the induction hypothesis, the hypothesis that $F\colon{\R}^d\times
S\rightarrow {\R}^d$ is $(\mathcal{B}({\R}^d) \otimes \mathcal{S}) / \mathcal{B}({\R}^d)$-measurable, the fact that for all $\theta \in \Theta$ it holds that $Z^\theta$ is $\mathcal{F}/\mathcal{S}$-measurable, the fact that 
for all $\theta \in \Theta$ it holds that 
$\mathcal{R}^\theta \colon
[0,T]\times\Omega \rightarrow [0,T]$ are stochastic processes, \cref{l2.14}, and \eqref{numsol}
prove that for all $\theta \in \Theta$ it holds that
${\mathcal{X}}_{n,m}^\theta \colon [0,T] \times \Omega \rightarrow {\R}^d $ is a stochastic process. This, the induction hypothesis, the fact that 
for all $\theta \in \Theta$ it holds that 
$\mathcal{R}^\theta \colon
[0,T]\times\Omega \rightarrow [0,T]$ are stochastic processes with continuous sample paths, \eqref{lipsch}, and \eqref{numsol} ensure that for all $\theta \in \Theta$ it holds that
${\mathcal{X}}_{n,m}^\theta \colon [0,T] \times \Omega \rightarrow {\R}^d $ is a stochastic process with continuous sample paths.
Induction thus establishes item~\eqref{it:propMLP:item1}.
Next we prove item~\eqref{it:propMLP:item2} by induction on $n \in \N_0$.
For the base case $n=0$ observe that the hypothesis that for all $\theta \in \Theta$, $t \in [0,T]$ it holds that 
${\mathcal{X}}_{0,m}^\theta(t) = \xi$ demonstrates that for all $\theta \in \Theta$ it holds that 
${\mathcal{X}}_{0,m}^\theta \colon [0,T] \times\Omega \rightarrow {\R}^d $ is
$\big( \mathcal{B}([0,T])\otimes\sigma_\Omega
		({(\mathfrak{r}^{(\theta,\vartheta)})}_{\vartheta \in \Theta},{(Z^{(\theta,\vartheta)})}_{\vartheta \in \Theta})\big)/\mathcal{B}(\R^d)$-measurable.
This implies item~\eqref{it:propMLP:item3} in the base case $n=0$.
For the induction step $\N_0 \ni (n-1) \rightarrow n \in \N$ let $n \in \N$ and assume that for every
$j \in \N_0 \cap [0,n)$, $\theta \in \Theta$ it holds
that ${\mathcal{X}}_{j,m}^\theta$ is $\big( \mathcal{B}([0,T])\otimes\sigma_\Omega
		({(\mathfrak{r}^{(\theta,\vartheta)})}_{\vartheta \in \Theta},{(Z^{(\theta,\vartheta)})}_{\vartheta \in \Theta})\big)/\mathcal{B}(\R^d)$-measurable.
Note that the induction hypothesis, the fact that $F\colon{\R}^d\times
S\rightarrow {\R}^d$ is $(\mathcal{B}({\R}^d) \otimes \mathcal{S}) / \mathcal{B}({\R}^d)$-measurable, the fact that for all $\theta \in \Theta$ it holds that $Z^\theta$ is $\mathcal{F}/\mathcal{S}$-measurable, \eqref{numsol}, and \cref{l2.14} prove that for all $\theta \in \Theta$, $t \in [0,T]$ it holds that
\begin{equation} \label{measurable}
\begin{split}
    \sigma_\Omega(\mathcal{X}^\theta_{n,m}(t)) 
    & \subseteq
    \sigma_\Omega\Big((Z^{(\theta,l,k)})_{k \in \lbrace 1,2, \dots,m^{n-l}\rbrace,l \in \N \cap [1,n)},
    (\mathcal{R}^{(\theta,l,k)}_t)_{ k \in \lbrace 1,2, \dots,m^{n-l} \rbrace, l \in \N \cap [1,n)},  \\
    &\ \ \ \ (\mathcal{R}^{(\theta,l,k, \vartheta)}_t)_{k \in \lbrace 1,2, \dots,m^{n-l} \rbrace, l \in \N \cap [1,n), \vartheta \in \Theta}, 
    (Z^{(\theta,l,k, \vartheta)})_{k \in \lbrace 1,2, \dots,m^{n-l} \rbrace,
    l \in \N \cap [1,n), \vartheta \in \Theta}, \\
    & \ \ \ \
    (\mathcal{R}^{(\theta,l,-k, \vartheta)}_t)_{k \in \lbrace 1,2, \dots,m^{n-l} \rbrace, l \in \N \cap [1,n), \vartheta \in \Theta},(Z^{(\theta,l,-k, \vartheta)})_{k \in \lbrace 1,2, \dots,m^{n-l} \rbrace,
    l \in \N \cap [1,n),
    \vartheta \in \Theta}, \\
    & \ \ \ \ (Z^{(\theta,0,k)})_{k \in \lbrace 1,2, \dots,m^{n} \rbrace}
    \Big) \\
    & \subseteq 
    \sigma_\Omega\big( (\mathfrak{r}^{(\theta, \vartheta)})_{\vartheta \in \Theta}, (Z^{(\theta, \vartheta)})_{\vartheta \in \Theta}
     \big).
\end{split}    
\end{equation}
Moreover, observe that item~\eqref{it:propMLP:item1} ensures that for all $\theta \in \Theta$ it holds that $\mathcal{X}^\theta_{n,m}$ is 
$\big( \mathcal{B}([0,T])\otimes\sigma_\Omega(\mathcal{X}^\theta_{n,m}) \big) / \mathcal{B}(\R^d)$-measurable. Combining this with
\eqref{measurable} demonstrates that for all $\theta \in \Theta$
it holds that $\mathcal{X}^\theta_{n,m}$ is
$\big( \mathcal{B}([0,T])\otimes\sigma_\Omega
({(\mathfrak{r}^{(\theta,\vartheta)})}_{\vartheta
\in \Theta}, \allowbreak {(Z^{(\theta,\vartheta)})}_{\vartheta
\in \Theta})\big)/\mathcal{B}(\R^d)$-measurable.
Induction thus establishes item~\eqref{it:propMLP:item2}.
Furthermore, observe that item~\eqref{it:propMLP:item2}, the hypothesis that
$(Z^\theta)_{\theta \in \Theta}$ are independent, 
the hypothesis that $(\mathfrak{r}^\theta)_{\theta \in \Theta}$ are independent, the hypothesis that $(Z^\theta)_{\theta \in \Theta}$
and $(\mathfrak{r}^\theta)_{\theta \in \Theta}$ are independent,
and \cref{l2.14} prove item~\eqref{it:propMLP:item3}.
Next we prove item~\eqref{it:propMLP:item4} by induction on $n \in \N_0$.
For the base case $n=0$ observe that the hypothesis that for all $\theta \in \Theta$, $t \in [0,T]$ it holds that 
${\mathcal{X}}_{0,m}^\theta(t) = \xi$ demonstrates that for all $t \in [0,T]$ it holds that
$\Omega \ni \omega \mapsto \mathcal{X}_{0,m}^\theta (t, \omega)
\in \R^d$, $\theta \in \Theta$, are identically distributed random variables.
This establishes item~\eqref{it:propMLP:item4} in the base case $n=0$.
For the induction step $\N_0 \ni (n-1) \rightarrow n \in \N$ let $n \in \N$ and assume that for every
$j \in \N_0 \cap [0,n)$, $t \in [0,T]$,  $\theta \in \Theta$ it
holds that $\Omega \ni \omega \mapsto \mathcal{X}_{j,m}^\theta (t, \omega)
\in \R^d$, $\theta \in \Theta$, are identically distributed random variables.
The induction hypothesis, item~\eqref{it:propMLP:item1}, the hypothesis that
$(Z^\theta)_{\theta \in \Theta}$ are i.i.d., 
the hypothesis that $(\mathfrak{r}^\theta)_{\theta \in \Theta}$ are i.i.d., the hypothesis that $(Z^\theta)_{\theta \in \Theta}$
and $(\mathfrak{r}^\theta)_{\theta \in \Theta}$ are independent,
item~\eqref{it:propMLP:item2}, \cref{l3.4}, and \cref{l3.5} (applied for every $ \vartheta \in \Theta \setminus \theta$, $\theta \in \Theta$,
$t \in [0,T]$, $l \in \N \cap [1,n)$, $k \in \N$ with $S \curvearrowleft [0,T]$, $E \curvearrowleft \R^d$, $U \curvearrowleft \big( F\big(\mathcal{X}^{(\theta,l,k)}_{l,m}(s), Z^{(\theta, l, k)}\big)-F\big(\mathcal{X}^{(\theta,l,-k)}_{l-1,m}(s), Z^{(\theta, l, k)}\big)\big)_{s \in [0,T]}$,
$V \curvearrowleft \big( F\big(\mathcal{X}^{(\vartheta,l,k)}_{l,m
}(s), Z^{(\vartheta, l,
k)}\big)-F\big(\mathcal{X}^{(\vartheta,l,-k)}_{l-1,m}(s), Z^{(\vartheta, l, k)}\big)\big)_{s \in [0,T]}$, $X \curvearrowleft \mathcal{R}^{(\theta,l,k)}_t$, $Y \curvearrowleft \mathcal{R}^{(\vartheta,l,k)}_t$ in the notation of \cref{l3.5}) assure that for all
$t \in [0,T]$, $l \in \N \cap [1,n)$, $k \in \N$ it holds that 
\begin{equation}
    \Big( F\big( \mathcal{X}_{l,m}^{(\theta,l,k)} (\mathcal{R}_{t}^{(\theta,l,k)}), Z^{(\theta,l,k)} \big)
     - F\big( \mathcal{X}_{l-1,m}^{(\theta,l,-k)}(\mathcal{R}_{t}^{(\theta,l,k)}), Z^{(\theta,l,k)} \big)\Big)_{\theta \in \Theta}
\end{equation}
are identically distributed random variables. Item (iii), \eqref{numsol}, the hypothesis that
$(Z^\theta)_{\theta \in \Theta}$ are i.i.d., and \cref{l3.4} therefore ensure that for all $t \in [0,T]$ it holds that
$\Omega \ni \omega \mapsto \mathcal{X}_{n,m}^\theta (t,\omega)
\in \R^d$, $\theta \in \Theta$, are identically distributed random variables.
Induction thus establishes item~\eqref{it:propMLP:item4}. The proof of \cref{propMLP} is thus completed.
\end{proof}

\begin{lemma} \label{l3.7}
Assume \cref{setting}, let $\theta \in \Theta$, $t \in [0, T]$, let 
$U_1 \colon [0,t]\times\Omega \rightarrow [0, \infty)$ and 
$U_2 \colon [0,t]\times\Omega \rightarrow \R^d$ be
stochastic processes with continuous sample paths, assume for all $i \in \lbrace1,2\rbrace$ that $U_i$ and $\mathfrak{r}^\theta$ are independent, and assume that
$\int_0^t \E[\| U_2(r) \|]\, dr <\infty$. Then it holds for all $i \in \lbrace1,2\rbrace$ that $\mathrm{Borel}_{[0,t]}(\lbrace r \in [0,t] \colon\E[\| U_2(r)\| ] =\infty\rbrace) =0$,
$\E[\| U_2(\mathcal{R}^\theta_t) \| ] < \infty $, and 
\begin{equation} \label{183}
t\, \E[ U_i(\mathcal{R}^\theta_t) ] = \int_0^t \E[ U_i(r)]\, dr.
\end{equation}
\end{lemma}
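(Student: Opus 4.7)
The proof reduces to applying \cref{l2.15} to $U_1$ and \cref{l2.16new} to $U_2$, together with the observation that $\mathcal{R}^\theta_t = \mathfrak{r}^\theta t$ has pushforward law on $\mathcal{B}([0,t])$ equal to $t^{-1} \lambda_{[0,t]}$ (Lebesgue measure on $[0,t]$) whenever $t > 0$. The case $t = 0$ I would dispatch separately: $\mathcal{R}^\theta_0 \equiv 0$, so both sides of \eqref{183} vanish, $\E[\|U_2(\mathcal{R}^\theta_0)\|] = \E[\|U_2(0)\|] < \infty$ by continuity of sample paths at $0$, and the null set claim is vacuous.

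For $t \in (0,T]$ I would verify three preliminaries. First, $([0,t], |\cdot|)$ is a separable metric space, so the cited lemmas apply with $S = [0,t]$. Second, the hypothesis that $U_i$ and $\mathfrak{r}^\theta$ are independent transfers to independence of $U_i$ and $\mathcal{R}^\theta_t$, since $\mathcal{R}^\theta_t$ is a Borel function of $\mathfrak{r}^\theta$. Third, the uniform law of $\mathfrak{r}^\theta$ yields
\begin{equation}
\int_{[0,t]} \varphi(s)\, (\mathcal{R}^\theta_t(\P)_{\mathcal{B}([0,t])})(ds) = \frac{1}{t} \int_0^t \varphi(r)\, dr
\end{equation}
for every Borel measurable $\varphi\colon [0,t] \to [0,\infty]$.

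For $U_2$, the hypothesis $\int_0^t \E[\|U_2(r)\|]\,dr < \infty$ together with the identity above shows that $\int_{[0,t]} \E[\|U_2(s)\|]\,(\mathcal{R}^\theta_t(\P)_{\mathcal{B}([0,t])})(ds) < \infty$, so \cref{l2.16new} applies. Its item~\eqref{it:l2.16new:item2} yields the $(\mathcal{R}^\theta_t(\P)_{\mathcal{B}([0,t])})$-null set $\{r \in [0,t] \colon \E[\|U_2(r)\|] = \infty\}$, which by mutual absolute continuity of $t^{-1}\lambda_{[0,t]}$ and $\lambda_{[0,t]}$ is also a Lebesgue null set; item~\eqref{it:l2.16new:item3} yields $\E[\|U_2(\mathcal{R}^\theta_t)\|] < \infty$ and $\E[U_2(\mathcal{R}^\theta_t)] = t^{-1}\int_0^t \E[U_2(r)]\,dr$, establishing \eqref{183} for $i = 2$ after multiplying by $t$. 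For $U_1 \geq 0$, \cref{l2.15} applies unconditionally and analogously gives $\E[U_1(\mathcal{R}^\theta_t)] = t^{-1}\int_0^t \E[U_1(r)]\,dr$, establishing \eqref{183} for $i = 1$. No step constitutes a real obstacle; the main care is in identifying the pushforward law $\mathcal{R}^\theta_t(\P)_{\mathcal{B}([0,t])}$ explicitly so that the abstract integrals in the cited lemmas can be rewritten as ordinary Lebesgue integrals on $[0,t]$, and in handling the degenerate boundary case $t = 0$ by hand.
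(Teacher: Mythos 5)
Your proposal is correct and follows essentially the same route as the paper's own proof: identify $\mathcal{R}^\theta_t$ as $\mathcal{U}_{[0,t]}$-distributed, apply \cref{l2.15} to the nonnegative process $U_1$ and \cref{l2.16new} to $U_2$, and rewrite the pushforward integrals as Lebesgue integrals on $[0,t]$. The only difference is cosmetic: you treat the degenerate case $t=0$ explicitly, whereas the paper dispatches it with a ``without loss of generality $t>0$''.
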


\begin{proof}[Proof of \cref{l3.7}]
Throughout this proof assume w.l.o.g.\ that $T>0$ and $t>0$.
Observe that the hypothesis that 
$\mathcal{R}^\theta_t =  \mathfrak{r}^\theta t$ implies that $\mathcal{R}^\theta_t$
is $\mathcal{U}_{[0,t]}$-distributed. Combining this with the 
fact that $U_1$ is a stochastic process with continuous sample paths, the fact that $U_1$ and
$\mathcal{R}^\theta_t$ are independent, and \cref{l2.15}
assures that
\begin{equation} \label{184}
\begin{split}
    t\,  \E[ U_1(\mathcal{R}^\theta_t) ]
    & =
    t \int_{[0,t]} \E[ U_1(r)] (\mathcal{R}^\theta_t(\P)_
    {\mathcal{B}([0,t])})(dr)\\
    &= t \int_{[0,t]} \E[ U_1(r)] (\mathcal{U}_{[0,t]})(dr)
    = \frac{t}{t} \int_0^t \E[ U_1(r)]\, dr 
    = \int_0^t \E[ U_1(r)]\, dr.
\end{split}
\end{equation}
In addition, note that the fact that $\mathcal{R}^\theta_t$
is $\mathcal{U}_{[0,t]}$-distributed, the fact that 
$U_2$ is a stochastic process with continuous sample paths, the fact that $U_2$ and $\mathcal{R}^\theta_t$ are independent, the hypothesis that
$\int_0^t \E[\| U_2(r) \|]\, dr <\infty$, and \cref{l2.16new}
ensure that $\mathrm{Borel}_{[0,t]}(\lbrace r \in [0,t] \colon \E[\| U_2(r) \|] = \infty  \rbrace)=0$, 
$\E[\|U_2(\mathcal{R}^\theta_t)\|]< \infty$, and
\begin{equation}
    \begin{split}
    t\,  \E[ U_2(\mathcal{R}^\theta_t) ] 
    & =
    t \int_{[0,t]} \E[ U_2(r)] (\mathcal{R}^\theta_t(\P)_
    {\mathcal{B}([0,t])})(dr)\\
    &= t \int_{[0,t]} \E[ U_2(r)] (\mathcal{U}_{[0,t]})(dr)
    = \frac{t}{t} \int_0^t \E[ U_2(r)]\, dr 
    = \int_0^t \E[ U_2(r)]\, dr.
    \end{split}
\end{equation}
Combining this with \eqref{184} establishes \eqref{183}.
The proof of \cref{l3.7} is thus completed.
\end{proof}

\begin{lemma}[Expectations of MLP approximations]
\label{expMLP}     
Assume \cref{setting} and assume that
$\E [ \| F(\xi, Z^0) \| ] < \infty $. Then 
\begin{enumerate}[(i)]
\item\label{it:expMLP:item1} for all $n \in \N_0$, $m \in \N$, $t \in [0,T]$, $s \in [0,t]$ it holds that
\begin{equation}
\begin{split}
&\E\big[\| \mathcal{X}_{n,m}^0 (s) \| \big] + t\, \E\big[ \|
 \mathcal{X}_{n,m}^0 (\mathcal{R}_t^0) \| \big] + 
t\, \E  \big[ \|F(  \mathcal{X}_{n,m}^0 (\mathcal{R}_t^0), Z^0) \| 
 \big] \\
 & = \E \big[\| \mathcal{X}_{n,m}^0 (s) \| \big]
 + \int_0^t \E \big[ \| \mathcal{X}_{n,m}^0 (r) \| \big]\, dr  +\int_0^t \E  \big[
\|F(  \mathcal{X}_{n,m}^0 (r), Z^0) \|
 \big]\, dr <\infty
 \end{split}
\end{equation}
and
\item\label{it:expMLP:item2} for all $n, m \in \N$, $t \in [0,T]$ it holds that 
$\E[ \mathcal{X}_{n,m}^0 (t)] = \xi + \int_0^t \E [ F(  \mathcal{X}_{n-1,m}^0 (r), Z^0 )]\, dr $.
\end{enumerate}
\end{lemma}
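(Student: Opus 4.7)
I would prove both items jointly by induction on $n \in \N_0$, repeatedly invoking \cref{l3.7} to trade the evaluation at the uniform random time $\mathcal{R}_t^0 = \mathfrak{r}^0 t$ for an integral over $[0,t]$. The crucial preliminary fact is that, by item~\eqref{it:propMLP:item2} of \cref{propMLP}, $\mathcal{X}_{n,m}^0$ is measurable with respect to $\sigma_\Omega((\mathfrak{r}^{(0,\vartheta)})_{\vartheta \in \Theta}, (Z^{(0,\vartheta)})_{\vartheta \in \Theta})$, which is independent of $\sigma_\Omega(\mathfrak{r}^0, Z^0)$ because $0 \in \Z^1$ is not of the form $(0,\vartheta)$ inside $\Theta = \cup_{n\geq 1}\Z^n$ and the families $(\mathfrak{r}^\theta)_\theta$, $(Z^\theta)_\theta$ are i.i.d.\ and mutually independent. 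Analogous statements apply after replacing the root index $0$ by any $(0,l,\pm k)$.

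For item~\eqref{it:expMLP:item1}, the two nontrivial equalities $t\,\E[\|\mathcal{X}_{n,m}^0(\mathcal{R}_t^0)\|] = \int_0^t \E[\|\mathcal{X}_{n,m}^0(r)\|]\,dr$ and $t\,\E[\|F(\mathcal{X}_{n,m}^0(\mathcal{R}_t^0), Z^0)\|] = \int_0^t \E[\|F(\mathcal{X}_{n,m}^0(r), Z^0)\|]\,dr$ would follow from \cref{l3.7} applied with $\theta \curvearrowleft 0$, $U_1(r) \curvearrowleft \|\mathcal{X}_{n,m}^0(r)\|$ and $U_2(r) \curvearrowleft F(\mathcal{X}_{n,m}^0(r), Z^0)$. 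Continuity of sample paths comes from item~\eqref{it:propMLP:item1} of \cref{propMLP} and continuity of $F(\cdot,s)$ inherited from \eqref{lipsch}, and the required independence from $\mathfrak{r}^0$ follows from the preliminary fact above. For the finiteness claim I would run a separate induction on $n$: the base $n = 0$ is immediate since $\mathcal{X}_{0,m}^0 \equiv \xi$ and $\E[\|F(\xi,Z^0)\|] < \infty$. For the step, I would apply the triangle inequality to \eqref{numsol}, and on each expectation $\E[\|F(\mathcal{X}_{l,m}^{(0,l,k)}(\mathcal{R}_t^{(0,l,k)}), Z^{(0,l,k)})\|]$ with $l < n$ invoke item~\eqref{it:propMLP:item4} of \cref{propMLP} (identical distribution) together with \cref{l3.7} at the index $(0,l,k)$ to reduce it to $\tfrac{1}{t}\int_0^t \E[\|F(\mathcal{X}_{l,m}^0(r), Z^0)\|]\,dr$, which is finite by the induction hypothesis, bridging bounds on $\E[\|\mathcal{X}_{j,m}^0\|]$ and $\E[\|F(\mathcal{X}_{j,m}^0, Z^0)\|]$ via the pointwise estimate $\|F(x,s)\| \leq \|F(\xi,s)\| + L\|x-\xi\|$ from \eqref{lipsch}.

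For item~\eqref{it:expMLP:item2}, once item~\eqref{it:expMLP:item1} provides integrability, I would take expectations term-by-term in \eqref{numsol}. For each $l \in \{1, \ldots, n-1\}$, item~\eqref{it:propMLP:item4} of \cref{propMLP} together with the i.i.d.\ structure of $(Z^\theta)_\theta$ renders every summand $\E[F(\mathcal{X}_{l,m}^{(0,l,k)}(\mathcal{R}_t^{(0,l,k)}), Z^{(0,l,k)})] - \E[F(\mathcal{X}_{l-1,m}^{(0,l,-k)}(\mathcal{R}_t^{(0,l,k)}), Z^{(0,l,k)})]$ independent of $k$, and \cref{l3.7} converts it into $\tfrac{1}{t}\int_0^t \bigl(\E[F(\mathcal{X}_{l,m}^0(r), Z^0)] - \E[F(\mathcal{X}_{l-1,m}^0(r), Z^0)]\bigr) dr$. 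Averaging over $k \in \{1,\ldots,m^{n-l}\}$ cancels the $\tfrac{1}{m^{n-l}}$ prefactor, and the $l=0$ term gives $t\,\E[F(\xi,Z^0)] = \int_0^t \E[F(\mathcal{X}_{0,m}^0(r), Z^0)]\,dr$. The resulting sum over $l \in \{1,\ldots,n-1\}$ telescopes and combines with the $l = 0$ contribution to yield exactly $\xi + \int_0^t \E[F(\mathcal{X}_{n-1,m}^0(r), Z^0)]\,dr$, as desired.

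\textbf{Main obstacle.} The main care lies in the independence and joint-distributional bookkeeping: every time item~\eqref{it:propMLP:item4} of \cref{propMLP} is invoked to replace $\mathcal{X}_{l,m}^{(0,l,\pm k)}$ by the identically distributed $\mathcal{X}_{l,m}^0$, one has to simultaneously verify that the triple $(\mathcal{X}_{l,m}^{(0,l,\pm k)}, \mathcal{R}_t^{(0,l,k)}, Z^{(0,l,k)})$ has the right joint distribution and that, after the substitution, the resulting random field is still a continuous field independent of the uniform random time to which \cref{l3.7} is to be applied. Item~\eqref{it:propMLP:item2} of \cref{propMLP} pins down exactly which $\mathfrak{r}$'s and $Z$'s support each $\mathcal{X}_{l,m}^{(0,l,\pm k)}$, so the independence is ultimately transparent, but tracking it cleanly across all summands of \eqref{numsol} is the most tedious piece of the argument.
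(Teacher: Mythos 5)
Your proposal is correct and follows essentially the same route as the paper's proof: \cref{l3.7} (backed by the measurability, continuity, and independence facts from \cref{propMLP}) converts evaluations at $\mathcal{R}_t^0$ into time integrals, finiteness is obtained by a separate induction on $n$ via the triangle inequality on \eqref{numsol} and the bound $\|F(x,s)\| \leq \|F(\xi,s)\| + L\|x-\xi\|$, and item~\eqref{it:expMLP:item2} follows by taking expectations term-by-term so that the level sums telescope. The only cosmetic difference is that the paper proves item~\eqref{it:expMLP:item1} fully before deducing item~\eqref{it:expMLP:item2} rather than running a joint induction, and it applies \cref{l3.7} to the nonnegative scalar field $\|F(\mathcal{X}_{n,m}^0(\cdot),Z^0)\|$ for the norm identity; neither affects correctness.
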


\begin{proof}[Proof of \cref{expMLP}]
Throughout this proof let $m \in \N$. Observe that \cref{l3.7}, items~\eqref{it:propMLP:item1} and \eqref{it:propMLP:item2} in \cref{propMLP}, and the fact that for all $n \in \N$ it holds that $\mathcal{X}_{n,m}^0$, $Z^0$, and $\mathfrak{r}^0$ are independent demonstrate that for all $n \in \N_0$, $ t \in [0,T] $ it holds that
\begin{equation} \label{188}
t\, \E\big[ \|  \mathcal{X}_{n,m}^0 (\mathcal{R}_t^0) \| \big]  + 
t\, \E  \big[ \|F(  \mathcal{X}_{n,m}^0 (\mathcal{R}_t^0), Z^0) \| 
 \big] 
 = \int_0^t \E \big[ \| \mathcal{X}_{n,m}^0 (r)\| \big]\, dr
 +   \int_0^t \E  \big[
 \|F(  \mathcal{X}_{n,m}^0 (r), Z^0) \|
 \big]\, dr .
\end{equation}
 Next we claim that for all $n \in \N_0$, $t \in [0,T]$, $s \in [0,t]$
it holds that 
\begin{equation} \label{induction}
  \E \big[ \| \mathcal{X}_{n,m}^0 (s) \| \big]
 + \int_0^t \E \big[\| \mathcal{X}_{n,m}^0 (r) \| \big]\, dr 
  +   \int_0^t \E  \big[
 \|F(  \mathcal{X}_{n,m}^0 (r), Z^0) \|
 \big]\, dr <\infty.
\end{equation}
We now prove \eqref{induction} by induction on $n \in \N_0$. For the base case $n=0$ observe that
 the hypothesis that for all $t \in [0,T]$ it holds that $\mathcal{X}_{0,m}^0 (t)
 = \xi $ and the hypothesis that 
$\E [ \| F(\xi, Z^0) \| ] < \infty $  imply that for all 
$t \in [0,T]$, $s \in [0,t]$ it holds that
\begin{equation}
\E\big[ \| \mathcal{X}_{0,m}^0 (s) \| \big] +
 \int_0^t \E \big[ \| \mathcal{X}_{0,m}^0 (r) \| \big]\, dr 
  +   \int_0^t \E  \big[
 \|F(  \mathcal{X}_{0,m}^0 (r), Z^0) \|
 \big]\, dr
 \leq \left\| \xi \right\| + T \| \xi \| +
T\, \E [ \| F(\xi, Z^0) \| ] < \infty.
\end{equation}
This establishes \eqref{induction} in the base case $n=0$. 
 For the induction step $\N_0\ni (n-1) \rightarrow n \in \N$ 
let $n \in \N$ and assume that for all $j \in \N_0 \cap [0,n)$, 
$t \in [0,T]$, $s \in [0,t]$ it holds that
\begin{equation} \label{191}
 \E \big[ \| \mathcal{X}_{j,m}^0 (s) \| \big]
 + \int_0^t \E \big[ \| \mathcal{X}_{j,m}^0 (r) \| \big]\, dr 
  +   \int_0^t \E  \big[
 \|F(  \mathcal{X}_{j,m}^0 (r), Z^0) \|
 \big]\, dr <\infty.
\end{equation}
Note that \eqref{numsol} and the triangle inequality ensure that 
for all $t \in [0,T]$, $s \in [0,t]$ it holds that
\begin{equation} \label{192}
\begin{split}
\E\big[ \| \mathcal{X}_{n,m}^0 (s) \| \big]
 & \leq     \sum_{l=1}^{n-1}
\frac{s}{m^{n-l}}  \Bigg[  \sum_{k=1}^{m^{n-l}} 
 \bigg( \E \big[ \| F ( \mathcal{X}_{l,m}^{(0,l,k)} (\mathcal{R}_{s}^{(0,l,k)}), Z^{(0,l,k)} ) \| \big]  + \E \big[\| F( \mathcal{X}_{l-1,m}^{(0,l,-k)}(\mathcal{R}_{s}^{(0,l,k)}), Z^{(0,l,k)} )\| \big] \bigg)  \Bigg] \\
 & + \frac{s}{m^n}  \Bigg[ \sum_{k=1}^{m^n}
 \E \big[ \| F(\xi,Z^{(0,0,k)} ) \| \big] \Bigg] + \| \xi \|.
\end{split}
\raisetag{65pt}
\end{equation}
 Furthermore, observe that the hypothesis that $\E \big[ \| F\big(\xi,Z^0 \big) \| \big]  < \infty$ and the hypothesis that $(Z^\theta)_{\theta \in \Theta}$ are identically distributed random variables assure that for all $k \in \mathbb{Z}$ it holds that
 \begin{equation}\label{193}
 \E \big[ \| F(\xi,Z^{(0,0,k)} ) \| \big] =
 \E \big[ \| F(\xi,Z^0) \| \big]  < \infty.
 \end{equation}
Moreover, observe that \cref{l3.7}, the hypothesis that $ {(Z^{\theta})}_{\theta \in \Theta} $ are i.i.d., the hypothesis that ${(\mathfrak{r}^\theta)}_{\theta \in \Theta}$ are independent, the hypothesis that  $ {(Z^{\theta})}_{\theta \in \Theta} $ and ${(\mathfrak{r}^\theta)}_{\theta \in \Theta}$ are independent, and  items~\eqref{it:propMLP:item1}, \eqref{it:propMLP:item2}, and \eqref{it:propMLP:item4} in \cref{propMLP} demonstrate that for all $i,j,k \in \mathbb{Z}$, $l \in \N_0$, $t \in [0,T]$, $s \in [0,t]$
 it holds that
\begin{equation}
 s\,  \E \big[ \| F ( \mathcal{X}_{l,m}^{(0,j,i)} (\mathcal{R}_{s}^{(0,j,k)}), Z^{(0,j,k)} ) \| \big] 
 = \int_0^s  \E \big[ \| F ( \mathcal{X}_{l,m}^{(0,j,i)} (r), Z^{(0,j,k)} ) \| \big]\,  dr = \int_0^s  \E \big[ \| F ( \mathcal{X}_{l,m}^0 (r), Z^0) \| \big]\, dr.
\end{equation}
Combining this, \eqref{191}, \eqref{192}, and \eqref{193} establishes that for all $t \in [0,T]$, $s \in [0,t]$ it holds that
\begin{equation}  \label{195}
\begin{split}
\E\big[ \| \mathcal{X}_{n,m}^0 (s) \| \big] 
& \leq    
\Bigg(  \sum_{l=1}^{n-1}
\frac{1}{m^{n-l}}  \Bigg[  \sum_{k=1}^{m^{n-l}} 
 \bigg(\int_0^s  \E \big[ \| F ( \mathcal{X}_{l,m}^0 (r), Z^0) \| \big]\, dr  +  \int_0^s  \E \big[ \| F ( \mathcal{X}_{l-1,m}^0 (r), Z^0) \| \big]\, dr \bigg)\Bigg] \Bigg) 
  \\
  & 
  \quad + \frac{s}{m^n}  \left[ \sum_{k=1}^{m^n}
  \E \big[ \| F(\xi,Z^0) \| \big]  \right] + \| \xi \| \\
  & = \Bigg[  \sum_{l=1}^{n-1} \bigg(
 \int_0^s  \E \big[ \| F ( \mathcal{X}_{l,m}^0 (r), Z^0) \| \big]\,  dr 
  +  \int_0^s  \E \big[ \| F ( \mathcal{X}_{l-1,m}^0 (r), Z^0) \| \big]\, dr \bigg) \Bigg] 
  +    
 s\,  \E \big[ \| F(\xi,Z^0) \| \big] + \| \xi\| \\
 & \leq  2 \Bigg[  \sum_{l=1}^{n-1} 
 \int_0^t  \E \big[ \| F ( \mathcal{X}_{l,m}^0 (r), Z^0) \| \big]\,  dr  \Bigg]  + 2T\, \E \big[ \| F(\xi,Z^0) \| \big] 
 + \| \xi \| <\infty.
\end{split}
\end{equation}
Hence, we obtain that for all $t \in [0,T]$ it holds that 
\begin{equation} \label{196}
\begin{split}
&\int_0^t \E\big[ \| \mathcal{X}_{n,m}^0 (r) \| \big]\, dr
\leq t  \sup_{s \in [0,t]} \E\big[ \| \mathcal{X}_{n,m}^0 (s) \| \big]\\
& \leq t \left( 2 \Bigg[  \sum_{l=1}^{n-1} 
 \int_0^t  \E \big[ \| F ( \mathcal{X}_{l,m}^0 (r), Z^0) \| \big]\,  dr  \Bigg] + 2T\, \E \big[ \| F(\xi,Z^0) \| \big]  + \left\| \xi \right\| \right) <\infty.
\end{split}
\end{equation}
Next note that \eqref{lipsch} and the triangle inequality imply that for all $x \in {\R}^d$, $s \in S$ it holds that
\begin{equation} 
\| F(x,s) \|  \leq \| F(\xi,s) \| + \| F(x,s) -F(\xi,s) \| \leq 
 \| F(\xi,s) \| + L \| x-\xi \|.
\end{equation}
This, the triangle inequality, \eqref{196}, and the hypothesis that $\E [ \| F(\xi, Z^0) \| ] < \infty $ assure that
for all $t \in [0,T]$ it holds that
\begin{equation}
\begin{split}
\int_0^t  \E \big[ \| F ( \mathcal{X}_{n,m}^0 (r), Z^0) \| \big]\,  dr 
& \leq \int_0^t \E \big[ \| F ( \xi, Z^0) \| \big]\,  dr 
+ L\!  \int_0^t \E \big[ \|  \mathcal{X}_{n,m}^0 (r) - \xi \| \big]\,  dr \\
& \leq T\,  \E \big[ \| F ( \xi, Z^0) \| \big]
+ L\! \int_0^t \E \big[\|  \mathcal{X}_{n,m}^0 (r) \| \big]\,  dr + L\! \int_0^t \E \big[ \|  \xi \| \big]\,  dr \\
& \leq  T\,  \E \big[ \| F ( \xi, Z^0) \| \big]
+ L\!  \int_0^t \E \big[ \|  \mathcal{X}_{n,m}^0 (r) \| \big]\,  dr
+ L T \left\|  \xi \right\| <\infty .
\end{split}
\end{equation}
This, \eqref{195}, and \eqref{196} establish that for all $t \in [0,T]$, $s \in [0,t]$ it holds that
\begin{equation}
 \E \big[ \| \mathcal{X}_{n,m}^0 (s) \| \big]
 + \int_0^t \E \big[\| \mathcal{X}_{n,m}^0 (r) \| \big]\,
 dr 
  +   \int_0^t \E  \big[
 \| F(  \mathcal{X}_{n,m}^0 (r), Z^0) \|
 \big]\, dr <\infty.
\end{equation}
Induction thus proves \eqref{induction}.
Combining \eqref{188} and \eqref{induction} hence establishes item~\eqref{it:expMLP:item1}.
Next observe that  \eqref{numsol}, \eqref{induction}, items~\eqref{it:propMLP:item1}, \eqref{it:propMLP:item2}, and \eqref{it:propMLP:item4} in \cref{propMLP}, the hypothesis that $ {(Z^{\theta})}_{\theta \in \Theta} $ are i.i.d., the hypothesis that ${(\mathfrak{r}^\theta)}_{\theta \in \Theta}$ are i.i.d., the hypothesis that  $ {(Z^{\theta})}_{\theta \in \Theta} $ and ${(\mathfrak{r}^\theta)}_{\theta \in \Theta}$ are independent,
and \cref{l3.5} ensure that for all $n \in \N$,
$t \in [0,T]$ it holds that
\begin{equation} \label{mean}
\begin{split}
\E \big[ \mathcal{X}_{n,m}^0 (t)\big] 
&= \sum_{l=1}^{n-1}
\frac{t}{m^{n-l}}  \Bigg[  \sum_{k=1}^{m^{n-l}} \!\bigg( \E \big[ F ( \mathcal{X}_{l,m}^{(0,l,k)} (\mathcal{R}_{t}^{(0,l,k)}), Z^{(0,l,k)} )  \big] - \E \big[  F( \mathcal{X}_{l-1,m}^{(0,l,-k)}(\mathcal{R}_{t}^{(0,l,k)}), Z^{(0,l,k)} ) \big] \bigg) \Bigg] 
\\
& \quad + \frac{t}{m^n}  \left[ \sum_{k=1}^{m^n}
 \E \big[  F(\xi,Z^{(0,0,k)}) \big] \right] + \xi \\
& =  t  \left[ \sum_{l=1}^{n-1} \bigg(  \E \big[ F ( \mathcal{X}_{l,m}^0 (\mathcal{R}_t^0), Z^0 ) \big] 
 - \E \big[  F( \mathcal{X}_{l-1,m}^0
(\mathcal{R}_t^0), Z^0) \big] \bigg)\right] + t\, \E \big[  F(\xi,Z^0 )\big] + \xi \\
&= \xi + t\, \E \big[  F( \mathcal{X}_{n-1,m}^0
(\mathcal{R}_t^0), Z^0 ) \big].
\end{split}
\raisetag{96pt}
\end{equation}
\cref{l3.7}, items~\eqref{it:propMLP:item1} and \eqref{it:propMLP:item4} in \cref{propMLP}, the fact that for all $n \in \N_0$ it holds that $\mathcal{X}_{n,m}^0$, $Z^0$, and $\mathfrak{r}^0$ are independent, and \eqref{induction} hence imply that for all $n \in \N$, $ t \in [0,T] $ it holds that
\begin{equation}
\E \big[ \mathcal{X}_{n,m}^0 (t) \big]  =
  \xi + \int_0^t \E \big[F(  \mathcal{X}_{n-1,m}^0 (r), Z^0)\big]\, dr .
\end{equation}
This establishes item~\eqref{it:expMLP:item2}. 
The proof of \cref{expMLP} is thus completed.
\end{proof}

\subsection{Error estimates for MLP approximations }\label{subsectionerrorestimate}

\begin{lemma}\label{variances}
Let $n \in {\N}$, let $(\Omega,\mathcal{F},{\P})$ be a probability
 space, let $X_1, X_2, \dots , X_n \colon \\ \Omega\rightarrow
 {\R}$ be independent random variables, and assume for all $i \in
 \left\lbrace 1,2, \dots , n\right\rbrace $ that ${\E}[|X_i |] < \infty$. Then it holds that 
\begin{equation}
\mathrm{Var} \Bigg( \sum_{i=1}^n X_i \Bigg) 
 = {\E} \Big[
\big| {\E}  \big[ \textstyle\sum_{i=1}^{n} X_i \big] - \textstyle\sum_{i=1}^n X_i \big| ^2  \Big] =\sum_{i=1}^n {\E}  \big[ | {\E}[X_i ] - X_i |^2 \big] =  \sum_{i=1}^n \mathrm{Var}(X_i).
\end{equation}
\end{lemma}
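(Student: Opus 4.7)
The plan is to prove the chain of three equalities by noting that the first and third are essentially definitional, while the middle one carries the content of the lemma --- the classical ``variance of a sum of independent random variables equals the sum of the variances'' identity. Since $\E[|X_i|]<\infty$ for every $i \in \{1,\dots,n\}$ by hypothesis, the triangle inequality yields $\E[|\sum_{i=1}^n X_i|] \le \sum_{i=1}^n \E[|X_i|] < \infty$, so that $\E[\sum_{i=1}^n X_i] = \sum_{i=1}^n \E[X_i]$ is a finite real number; with this in hand the outermost equality is just the definition of variance applied to the sum $\sum_i X_i$, and the rightmost equality is the same definition applied termwise.

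For the middle equality I would introduce the recentered variables $Y_i := X_i - \E[X_i]$, use that subtracting a deterministic constant preserves independence to conclude that $(Y_i)_{i=1}^n$ are independent with $\E[Y_i] = 0$ and $\E[|Y_i|] \le 2\E[|X_i|] < \infty$, and expand the square pointwise as
\begin{equation*}
\Big( \sum_{i=1}^n Y_i \Big)^{\!2} = \sum_{i=1}^n Y_i^2 + 2\!\!\sum_{1 \le i < j \le n}\!\! Y_i Y_j.
\end{equation*}
For each pair $i<j$, independence and integrability of $Y_i, Y_j$ imply (via Fubini) that the product $Y_iY_j$ is integrable with $\E[Y_iY_j] = \E[Y_i]\E[Y_j] = 0$, so that taking expectations of the displayed identity yields $\E[(\sum_i Y_i)^2] = \sum_i \E[Y_i^2]$, which after unwinding the definition of $Y_i$ is precisely the middle equality.

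The only subtlety --- which I would flag as the point requiring care rather than a genuine obstacle --- is that the hypothesis provides only $\E[|X_i|]<\infty$ and not $\E[X_i^2]<\infty$. I would therefore interpret the chain of equalities throughout in $[0,\infty]$: if some $\E[Y_i^2] = \infty$, then both sides of the middle equality equal $+\infty$, with the left side shown to be infinite by a truncation argument (for example, applying Fatou's lemma to the bounded approximations $X_i^{(N)} := \max\{\min\{X_i,N\},-N\}$ as $N \to \infty$). Otherwise all terms are finite and the cross-term argument above applies verbatim.
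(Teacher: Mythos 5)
The paper does not actually prove \cref{variances}; it refers the reader to \cite[Lemma~3.10]{hutzenthaler2019overcoming}, so there is no in-paper argument to compare against. Your proof is the standard one and its main line is correct: the outer equalities are definitional once you observe $\E[|\sum_i X_i|]<\infty$, and for the middle equality the recentering $Y_i=X_i-\E[X_i]$, the expansion of the square, and the cross-term computation $\E[|Y_iY_j|]=\E[|Y_i|]\,\E[|Y_j|]<\infty$ and $\E[Y_iY_j]=\E[Y_i]\,\E[Y_j]=0$ (Tonelli/Fubini on the product measure, using independence and $L^1$-integrability of each factor) are exactly what is needed. The one point I would push back on is your parenthetical treatment of the case $\E[Y_i^2]=\infty$: Fatou's lemma applied to truncations yields $\E[(\sum_i Y_i)^2]\le\liminf_{N}\E[(\sum_i Y_i^{(N)})^2]$, which is an upper bound and hence points the wrong way for concluding that the left-hand side is infinite. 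In fact no case distinction is necessary: writing $(\sum_i Y_i)^2=\sum_i Y_i^2+2\sum_{i<j}Y_iY_j$ as the sum of a nonnegative measurable function and an integrable one, the additivity $\E[A+B]=\E[A]+\E[B]$ is valid in $[0,\infty]$ for $A\ge 0$ and $B\in L^1(\P;\R)$, so $\E[(\sum_i Y_i)^2]=\sum_i\E[Y_i^2]$ holds simultaneously in the finite and infinite cases. (If you prefer to keep the case split, the infinite case follows by conditioning on $(Y_i)_{i\ne j}$ and applying Tonelli, since $\E[(Y_j+r)^2]\ge\E[Y_j^2]-2|r|\,\E[|Y_j|]=\infty$ for every $r\in\R$.) With that repair the proposal is complete.
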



\begin{lemma} \label{l2.9}
Let $(\Omega, \mathcal{F}, \mu)$ be a measure space and let $f \colon \Omega\rightarrow [0,\infty]$ be $\mathcal{F} / \mathcal{B}([0,\infty])$-measurable. Then it holds that
\begin{equation}
{\left[ \int_{\Omega} f(\omega)\,  \mu(d\omega)  \right] }^2
\leq \mu(\Omega) \int_{\Omega} {|f(\omega)|}^2 \, \mu(d\omega).
\end{equation} 
\end{lemma}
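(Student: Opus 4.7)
The plan is to derive the stated inequality as a direct instance of the Cauchy--Schwarz inequality for integrals, applied to $f$ and the constant function $1$. Equivalently, for a finite measure this amounts to Jensen's inequality for the convex function $x \mapsto x^2$ applied to the normalised probability measure $\mu/\mu(\Omega)$, and this is the form I would use in the main case.

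First I would dispose of the degenerate cases. If $\mu(\Omega) = 0$, then $f = 0$ $\mu$-almost everywhere, so both sides of the inequality equal $0$. If $\int_\Omega |f(\omega)|^2 \, \mu(d\omega) = 0$, then $f = 0$ $\mu$-almost everywhere, so $\int_\Omega f(\omega) \, \mu(d\omega) = 0$ and again both sides vanish. If $\mu(\Omega) = \infty$ and $\int_\Omega |f(\omega)|^2 \, \mu(d\omega) > 0$, then the right-hand side equals $\infty$ and the inequality is trivial.

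In the remaining case $0 < \mu(\Omega) < \infty$ and $0 < \int_\Omega |f(\omega)|^2 \, \mu(d\omega) \le \infty$, I would introduce the probability measure $\nu = \mu/\mu(\Omega)$ on $(\Omega, \mathcal{F})$ and invoke Jensen's inequality for the convex function $[0,\infty] \ni x \mapsto x^2 \in [0,\infty]$, obtaining
\begin{equation*}
\left( \int_\Omega f(\omega) \, \nu(d\omega) \right)^{2} \le \int_\Omega |f(\omega)|^2 \, \nu(d\omega).
\end{equation*}
Substituting $\nu(d\omega) = \mu(d\omega)/\mu(\Omega)$ and multiplying through by $\mu(\Omega)^2$ then yields the claim. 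The only mild technical point is the bookkeeping around $f$ possibly taking the value $\infty$, which is handled by the standard conventions $0 \cdot \infty = 0$ and $\infty^2 = \infty$ for extended-real-valued integration; no genuine obstacle arises, since the entire argument is essentially a single invocation of Jensen's (equivalently, Cauchy--Schwarz's) inequality together with a brief case analysis.
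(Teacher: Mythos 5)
Your proof is correct. The paper does not prove \cref{l2.9} itself but instead refers to \cite[Lemma~2.9]{hutzenthaler2019overcoming}, where the argument is precisely the standard one you give: the inequality is the Cauchy--Schwarz inequality applied to $f$ and the constant function $1$ (equivalently, Jensen for $x\mapsto x^2$ after normalising $\mu$), so your approach matches. Your case analysis is careful and sound; the only remark is that it could be collapsed entirely by invoking Cauchy--Schwarz in the extended reals directly, since $\int_\Omega f\cdot 1\,d\mu \leq (\int_\Omega f^2\,d\mu)^{1/2}(\mu(\Omega))^{1/2}$ holds without any finiteness or positivity assumptions under the usual conventions.
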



\begin{lemma}\label{l3.14}
Let  $T \in [0,\infty)$, $k \in \N$, and let $U \colon [0,T]\rightarrow
[0,\infty]$ be $\mathcal{B}([0,T])/\mathcal{B}([0,\infty])$-measurable.
Then it holds that
\begin{equation}
\int_0^T \frac{(T-t)^{k-1}}{(k-1)!} \int_0^t U(r)\, dr\, dt = 
\int_0^T \frac{(T-t)^k}{k!} U(t)\, dt.
\end{equation}
\end{lemma}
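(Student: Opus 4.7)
The plan is to apply Tonelli's theorem to swap the order of integration, and then to compute the resulting inner integral explicitly via the fundamental theorem of calculus.

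First, I would rewrite the left-hand side as a double integral over the triangular region $\{(r,t) \in [0,T]^2 : 0 \leq r \leq t \leq T\}$. Since the integrand $(r,t) \mapsto \frac{(T-t)^{k-1}}{(k-1)!} U(r) \mathbbm{1}_{[0,t]}(r)$ is nonnegative and (by the hypothesis on $U$ and continuity in $t$) jointly Borel measurable, Tonelli's theorem applies and gives
\begin{equation}
\int_0^T \frac{(T-t)^{k-1}}{(k-1)!} \int_0^t U(r)\, dr\, dt
= \int_0^T U(r) \int_r^T \frac{(T-t)^{k-1}}{(k-1)!}\, dt\, dr.
\end{equation}

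Next, I would evaluate the inner $t$-integral by an elementary antiderivative computation (or equivalently the substitution $u = T-t$), yielding
\begin{equation}
\int_r^T \frac{(T-t)^{k-1}}{(k-1)!}\, dt = \left[ -\frac{(T-t)^k}{k!} \right]_{t=r}^{t=T} = \frac{(T-r)^k}{k!}
\end{equation}
for every $r \in [0,T]$. Substituting this back and renaming the dummy variable $r$ as $t$ gives the right-hand side of the claim.

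There is really no substantial obstacle here; the only small subtlety is justifying the use of Tonelli rather than Fubini, which is immediate because $U$ takes values in $[0,\infty]$ and the kernel $\frac{(T-t)^{k-1}}{(k-1)!} \mathbbm{1}_{\{r \leq t\}}$ is nonnegative and Borel measurable on $[0,T]^2$.
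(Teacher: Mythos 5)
Your proposal is correct and follows essentially the same route as the paper: both rewrite the left-hand side as a double integral over the triangle $\{r \leq t\}$ via an indicator function, invoke Tonelli's theorem (justified by nonnegativity and measurability), and then evaluate the inner integral $\int_r^T \frac{(T-t)^{k-1}}{(k-1)!}\,dt = \frac{(T-r)^k}{k!}$ explicitly. No gaps.
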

\begin{proof}[Proof of \cref{l3.14}]
Observe that Tonelli's theorem assures that
\begin{equation}
\begin{split}
\int_0^T \frac{(T-t)^{k-1}}{(k-1)!} \int_0^t U(r)\, dr\, dt
& = \int_0^T \int_0^T  \frac{(T-t)^{k-1}}{(k-1)!} U(r)\   
\mathbbm{1}_{ \lbrace  (\mathfrak{t},\mathfrak{r}) \in [0,T]^2 \colon \mathfrak{r} \leq \mathfrak{t} \rbrace} (t,r) \, dr\, dt \\
& = \int_0^T \int_0^T  \frac{(T-t)^{k-1}}{(k-1)!} U(r)\   
\mathbbm{1}_{ \lbrace  (\mathfrak{t},\mathfrak{r}) \in [0,T]^2 \colon \mathfrak{r} \leq \mathfrak{t} \rbrace} (t,r) \, dt\, dr 
\\
& = \int_0^T \int_r^T  \frac{(T-t)^{k-1}}{(k-1)!} \, dt \  U(r) \, dr
= \int_0^T \frac{(T-r)^k}{k!}  U(r)\, dr .
\end{split}
\end{equation}
The proof of \cref{l3.14} is thus completed.
\end{proof}

\begin{proposition}\label{thmerrorestimate}
Assume \cref{setting}.
Then it holds for all $n \in {\N_0}$, $m \in \N$ that 
\begin{equation}
\big( {\E} \big[\| X(T) - {\mathcal{X}}_{n,m}^0(T)\|^2\big] 
\big)^{1/2} \leq \frac{ T
 \big( {\E}  \big[ \|F(\xi, Z^0)\|^2 \big] \big) ^{1/2}(1+2LT)^n e^{(LT + m/2)}}{m^{n/2}} .
\end{equation}
\end{proposition}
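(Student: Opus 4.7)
The plan is to execute the four-step program (bias--variance decomposition, bias control, variance control, Gronwall closure) sketched in the introduction of \cref{Section2}, by induction on $n$ applied to $e_n(t) := (\E[\|X(t) - \mathcal{X}_{n,m}^0(t)\|^2])^{1/2}$ for $n \in \N_0$ and $t \in [0,T]$; throughout I abbreviate $f := (\E[\|F(\xi, Z^0)\|^2])^{1/2}$, where we may assume $f < \infty$ (otherwise the bound is vacuous). The base case $n = 0$ follows immediately from \cref{l3.3}: since $\mathcal{X}_{0,m}^0 \equiv \xi$, one has $e_0(t) = \|X(t) - \xi\| \leq T f e^{LT}$, which is already dominated by the claimed right-hand side since $m \geq 1$ gives $e^{m/2} \geq 1$.

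For the inductive step $n \geq 1$, I would first apply Pythagoras in $L^2(\Omega; \R^d)$ to write $e_n(t)^2 = \|X(t) - \E[\mathcal{X}_{n,m}^0(t)]\|^2 + \mathrm{Var}(\mathcal{X}_{n,m}^0(t))$. For the bias I would identify $\E[\mathcal{X}_{n,m}^0(t)]$ via item~\eqref{it:expMLP:item2} of \cref{expMLP}, subtract \eqref{sol}, and invoke the Lipschitz bound \eqref{lipsch} together with Jensen; after Cauchy--Schwarz this yields $\|X(t) - \E[\mathcal{X}_{n,m}^0(t)]\|^2 \leq L^2 t \int_0^t e_{n-1}(r)^2 \, dr$. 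For the variance, item~\eqref{it:propMLP:item3} of \cref{propMLP} collapses the double sum in \eqref{numsol} into a sum of independent summands and item~\eqref{it:propMLP:item4} of \cref{propMLP} together with \cref{l3.5} ensures these summands are identically distributed across $k$ at each level $l$, so $\mathrm{Var}(\mathcal{X}_{n,m}^0(t)) \leq t^2 f^2/m^n + \sum_{l=1}^{n-1} (t^2/m^{n-l}) \E[\|A_l^1(t)\|^2]$, where $A_l^1(t)$ denotes the level-$l$ telescoping summand at $k=1$. For $l \geq 1$ the Lipschitz property, Minkowski's triangle inequality against the deterministic reference $X(\mathcal{R}_t^{(0,l,1)})$, the distributional identity from item~\eqref{it:propMLP:item4} of \cref{propMLP}, and \cref{l3.7} applied to the process $r \mapsto \|\mathcal{X}_{l,m}^0(r) - X(r)\|^2$ (which by construction does not involve $\mathfrak{r}^{(0,l,1)}$ and is therefore independent of it) yield $\E[\|A_l^1(t)\|^2] \leq (2L^2/t) \int_0^t [e_l(r)^2 + e_{l-1}(r)^2]\, dr$.

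Combining the two estimates produces the coupled recursion $e_n(t)^2 \leq L^2 t \int_0^t e_{n-1}(r)^2\, dr + t^2 f^2/m^n + \sum_{l=1}^{n-1} (2L^2 t/m^{n-l}) \int_0^t [e_l(r)^2 + e_{l-1}(r)^2]\, dr$. To unwind it I would introduce the doubly-indexed auxiliary quantity $\epsilon_{n,k} := \int_0^T ((T-r)^k/k!)\, e_n(r)^2\, dr$, multiply the recursion by $(T-t)^k/k!$, integrate over $[0,T]$, and use \cref{l3.14} to rewrite each iterated integral of $e_l^2$ as $\epsilon_{l,k+1}$. After merging the bias with the $l = n-1$ summand, the resulting inequality matches the hypothesis of \cref{l3.12} with $M = m$ and $\beta$ of order $L^2 T$; that lemma then controls $\epsilon_{n,k}$ uniformly in $k$, and plugging the resulting bound on $\int_0^T e_{n-1}(r)^2\, dr$ back into the original recursion evaluated at $t = T$ produces the pointwise estimate for $e_n(T)^2$. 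The hard part will be the final bookkeeping: tuning $\alpha$ and $\beta$ in \cref{l3.12}, together with the pairing of level-$l$ and level-$(l-1)$ contributions, so that after taking square roots the prefactor collapses exactly to $(1+2LT)^n e^{LT + m/2}$ with the sharp $m^{-n/2}$ decay rate, rather than a strictly looser growth factor.
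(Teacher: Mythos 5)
Your proposal reproduces the paper's argument step for step: the same bias--variance (Pythagoras) decomposition, the same bias bound $\|X(t)-\E[\mathcal{X}_{n,m}^0(t)]\|^2\le L^2 t\int_0^t e_{n-1}(r)^2\,dr$ via item~(ii) of \cref{expMLP}, Lipschitz continuity, Jensen and \cref{l2.9}, the same variance bound using item~(iii) of \cref{propMLP}, the identical-distribution reductions, $\mathrm{Var}(Y)\le\E[|Y|^2]$, the splitting $\|x+y\|^2\le 2(\|x\|^2+\|y\|^2)$ and \cref{l3.7}, and the same closure via \cref{l3.14} and the two-index Gronwall lemma \cref{l3.12}. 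All of these estimates are correct as you state them (the coefficient $4L^2T$ after merging the bias with the reindexed variance sum is exactly what the paper obtains).

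The one point you flag as unresolved ``bookkeeping'' is genuinely where your version, as written, would fall short of the stated constant. With your unnormalized choice $\epsilon_{n,k}=\int_0^T\frac{(T-r)^k}{k!}e_n(r)^2\,dr$, the inhomogeneous term produced by integrating $t^2f^2/m^n$ against $(T-t)^k/k!$ is of size $T^{k+3}f^2/(k+1)!$ times $m^{-n}$; to cast this as $\alpha/m^{n+k}$ for \cref{l3.12} you need $\alpha\gtrsim (Tm)^k T^3 f^2/(k+1)!$, whose supremum over $k$ is of order $e^{Tm}$, not $e^{m}$ --- so you would end with $e^{Tm/2}$ in place of $e^{m/2}$ unless $T\le 1$. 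The paper avoids this by normalizing, setting $\epsilon_{n,k+1}=T^{-(k+1)}\int_0^T\frac{(T-t)^k}{k!}e_n(t)^2\,dt$, so that the $T^k$ factors cancel and only $m^k/k!\le e^m$ survives; it also takes $\epsilon_{n,0}=e_n(T)^2$ to be the terminal point value itself, so \cref{l3.12} (with $\alpha=C^2e^m$, $\beta=4L^2T^2$, $M=m$) delivers the pointwise bound at $t=T$ directly, with no need for your final plug-back step. With that normalization your argument closes exactly as the paper's does, and $(1+4L^2T^2)^{n/2}\le(1+2LT)^n$ gives the stated prefactor.
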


\begin{proof}[Proof of \cref{thmerrorestimate}]
Throughout this proof assume w.l.o.g.\ that 
$T>0$ and ${\E}[ \|F(\xi, Z^0)\|^2]$  $< \infty$, let 
$C \in {[0,\infty) }$ satisfy that 
\begin{equation} \label{C}
C =  T
 \big( {\E}  \big[ \|F(\xi, Z^0) \|^2 \big] \big) ^{1/2}  e^{LT},
\end{equation}
let $\zeta_i \in \R$, $i \in \lbrace 1,2, \dots , d \rbrace$, satisfy that $(\zeta_1, \zeta_2, \dots , \zeta_d) = \xi$,
 let ${\chi}_{i,n,m}^\theta \colon [0,T]\times \Omega \rightarrow \R$, $n,m \in \N_0$, $\theta \in \Theta$, $i \in \lbrace 1,2, \dots , d \rbrace$,
 satisfy for all $n \in \N_0$, $m \in \N$, $\theta \in \Theta$ that 
 $({\chi}_{1,n,m}^\theta,{\chi}_{2,n,m}^\theta, \dots,{\chi}_{d,n,m}^\theta) = {\mathcal{X}}_{n,m}^\theta$,
 let $f_i \colon \R^d \times S \rightarrow \R$, $i \in \lbrace 1,2, \dots , d \rbrace$, satisfy for all $x \in \R^d$, $s \in S$ that
 \begin{equation}\label{leeffe}
     (f_1(x,s), f_2(x,s) \dots, f_d(x,s)) = F(x,s),
 \end{equation}
 and let $m \in \N$.
Observe that \eqref{numsol} establishes that
for all $n \in \N_0$, $\theta \in \Theta$, $t \in [0,T]$, $i \in \lbrace 1,2, \dots , d \rbrace$  it holds that
\begin{equation}
 \begin{split}\label{settingoncomponents}
{\mathcal{\chi}}_{i,n,m}^\theta (t) & = \Bigg[
 \sum_{l=1}^{n-1}
\frac{t}{m^{n-l}}  \sum_{k=1}^{m^{n-l}} \bigg( f_i\big( \mathcal{X}_{l,m}^{(\theta,l,k)} (\mathcal{R}_{t}^{(\theta,l,k)}), Z^{(\theta,l,k)} \big) - f_i\big( \mathcal{X}_{l-1,m}^{(\theta,l,-k)}(\mathcal{R}_{t}^{(\theta,l,k)}), Z^{(\theta,l,k)} \big)\bigg) \Bigg] \\
&  
^+\left[  \frac{t \mathbbm{1}_{\N}(n)}{m^n}  \sum_{k=1}^{m^n} f_i\big(\xi,Z^{(\theta,0,k)}\big)\right]  + \zeta_i.
 \end{split}
\end{equation}
Furthermore, note that \eqref{C} assures that
\begin{equation} \label{Csquared}
C^2 = T^2  {\E}  \big[ \| F(\xi, Z^0) \|^2 \big]  e^{2LT}
 \geq 
T^2 \big( {\E}  \big[ \|F(\xi, Z^0) \| ^2 \big] \big) .
\end{equation}
In addition, note that the hypothesis that $\E[ \|F(\xi, Z^0) \|^2] < \infty$ and Jensen's inequality assure that 
\begin{equation}\label{necperdopo}
\big(\E[\| F(\xi,Z^0) \|]\big)^2 \leq \E [ \|F(\xi, Z^0) \|^2 ] < \infty.
\end{equation}
This, \cref{expMLP},
 \eqref{lipsch}, and  \eqref{sol} 
 demonstrate that for all  $t \in [0,T]$, $n \in \N$ it holds that
\begin{equation}
\begin{split}
 \left\| {\E} [{\mathcal{X}}_{n,m}^0(t)] - X(t) \right\| 
 & \leq  \left\|  \xi + \int_0^t \E \big[ 
F(  \mathcal{X}_{n-1,m}^0 (r), Z^0) \big]\,  dr -
 \xi - \int_0^t {{\E}\big[ F (  X(r), Z^0 )  \big]}\, dr
 \right\|\\
  &= \left\|   \int_0^t \Big( \E\big[F(  \mathcal{X}_{n-1,m}^0 (r), Z^0)\big]-\E\big[F (  X(r), Z^0 )\big] \Big) \,   dr    \right\| \\
 &= \left\|   \int_0^t  \E\big[F(  \mathcal{X}_{n-1,m}^0 (r), Z^0)-F (  X(r), Z^0 )\big]\,   dr    \right\| \\
 & \leq  \int_0^t \E\big[\| F(  \mathcal{X}_{n-1,m}^0 (r), Z^0)-F(  X(r), Z^0)\|\big]   dr    \\
 &  \leq  \int_0^t \E \big[ L \|  \mathcal{X}_{n-1,m}^0 (r) - X(r) \| \big]\,   dr
  = L \int_0^t \E \big[  \|  \mathcal{X}_{n-1,m}^0 (r) - X(r) \|  \big]\, dr .
 \end{split}
\end{equation}
This, \cref{l2.9}, and Jensen's inequality imply that 
for all $t \in [0,T]$, $n \in \N$ it holds that 
\begin{equation} \label{eq0}
\begin{split}
\left\| {\E} [{\mathcal{X}}_{n,m}^0(t)] - X(t) \right\|^2 
& \leq L^2 \left( \int_0^t \E \big[  \|  \mathcal{X}_{n-1,m}^0 (r) - X(r) \|    \big]\,dr \right)^{\!\!2} 
\leq L^2 t  \int_0^t \big( \E \big[  \|  \mathcal{X}_{n-1,m}^0 (r) - X(r) \| \big]\! \big)^2 \, dr \\
&\leq L^2 T  \int_0^t \big( \E \big[ \|  \mathcal{X}_{n-1,m}^0 (r) - X(r) \|    \big] \big)^2 \,dr 
\leq L^2 T  \int_0^t \E \big[  \|  \mathcal{X}_{n-1,m}^0 (r) - X(r) \| ^2   \big]\, dr.
\end{split}
\end{equation}
In addition, observe that \cref{variances}, \eqref{necperdopo},  item~\eqref{it:expMLP:item1} in \cref{expMLP}, item~\eqref{it:propMLP:item3} in \cref{propMLP}, and \eqref{settingoncomponents} imply that for all $t \in [0,T]$, $n \in \N$ it holds that
\begin{equation} \label{eq1}
 \begin{split}
 & \E\big[ \|{\mathcal{X}}_{n,m}^0 (t) - \E[{\mathcal{X}}_{n,m}^0 (t)] \|^2 \big] 
 = \E \bigg[ \sum_{i=1}^d \big|{\chi}_{i,n,m}^0(t) - \E [{\chi}_{i,n,m}^0(t)]\big|^2 \bigg] 
 = \sum_{i=1}^d \mathrm{Var} 
\big({\chi}_{i,n,m}^0(t) \big)\\
&
 = \sum_{i=1}^d  \Bigg[  \sum_{l=1}^{n-1} 
  \sum_{k=1}^{m^{n-l}}  \mathrm{Var} \bigg( \frac{t}{m^{n-l}}\Big[  f_i \big( \mathcal{X}_{l,m}^{(0,l,k)} (\mathcal{R}_{t}^{(0,l,k)}), Z^{(0,l,k)} \big) - f_i  \big( \mathcal{X}_{l-1,m}^{(0,l,-k)}(\mathcal{R}_{t}^{(0,l,k)}), Z^{(0,l,k)} \big) \Big]  \bigg)
  \\
  & \quad + \mathrm{Var} \bigg(\frac{t}{m^n} \sum_{k=1}^{m^n}  f_i \big(\xi,Z^{(0,0,k)}\big)\bigg)  \Bigg] .
\end{split}
\end{equation}
Moreover, note that the hypothesis that $(Z^\theta)_{\theta \in \Theta}$ are i.i.d.\ and the fact that for all $Y \in \mathcal{L}^1(\P ;\!\R)$ it holds that $\mathrm{Var}(Y) \leq
\E[|Y|^2]$ ensure that for all $i \in \lbrace1, 2, \dots , d \rbrace$, $t \in [0,T]$, $n \in \N$ it holds that
\begin{equation}
 \mathrm{Var} \bigg(\frac{t}{m^n} \sum_{k=1}^{m^n}  f_i\big(\xi,Z^{(0,0,k)}\big)\bigg) 
 =   \frac{t^2 }{m^{2n}}  \mathrm{Var}\bigg( \sum_{k=1}^{m^n}  f_i\big(\xi,Z^{(0,0,k)} \big)\bigg)
 = \frac{t^2 m^n }{m^{2n}}  \mathrm{Var} \big(f_i \big(\xi,Z^0 \big)\big) 
 \leq \frac{t^2}{m^n}
 \E [| f_i \big(\xi,Z^0 \big)|^2].
\end{equation}
This and \eqref{leeffe} imply that for all $t \in [0,T]$, $n \in \N$ it holds that
\begin{equation} \label{eq2}
\begin{split}
 \sum_{i=1}^d \mathrm{Var} \bigg(\frac{t}{m^n} \sum_{k=1}^{m^n}  f_i\big(\xi,Z^{(0,0,k)}\big)\bigg)
 & \leq
 \sum_{i=1}^d \frac{t^2}{m^n} \big(
 \E [| f_i (\xi,Z^0 ) | ^2] \big) 
 = 
  \frac{t^2}{m^n} \Big(
 \E \Big[ \sum_{i=1}^d| f_i (\xi,Z^0 ) | ^2 \Big] \Big) 
 \\
 & 
 =
 \frac{t^2}{m^n} \big(
 \E \big[ \| F\big(\xi,Z^0 \big) \| ^2 \big] \big) 
 \leq \frac{T^2}{m^n}
 \E \big[ \| F \big(\xi,Z^0 \big) \| ^2 \big].
\end{split}
\raisetag{2cm}
\end{equation}
In addition, observe that items~\eqref{it:propMLP:item1}, \eqref{it:propMLP:item2}, and \eqref{it:propMLP:item4} in \cref{propMLP}, the hypothesis that $(Z^\theta)_{\theta \in \Theta}$ are i.i.d., the hypothesis that ${(\mathfrak{r}^\theta)}_{\theta \in \Theta}$ are i.i.d., the hypothesis that $(Z^\theta)_{\theta \in \Theta}$ and 
${(\mathfrak{r}^\theta)}_{\theta \in \Theta}$ are independent,
 the fact that for all $Y \in \mathcal{L}^1(\P;\!\R)$ it holds that $\mathrm{Var}(Y) \leq \E[|Y|^2]$, and \cref{l3.5} imply that for all $i \in \lbrace1, 2, \dots , d \rbrace$, $t \in [0,T]$, $n \in \N$, $l \in \N\cap [1,n)$ it holds that
\begin{equation}
\begin{split} 
 & \sum_{k=1}^{m^{n-l}}  \mathrm{Var} \Big( \frac{t}{m^{n-l}}\Big[ f_i \big( \mathcal{X}_{l,m}^{(0,l,k)} (\mathcal{R}_{t}^{(0,l,k)}), Z^{(0,l,k)} \big) - f_i \big( \mathcal{X}_{l-1,m}^{(0,l,-k)}(\mathcal{R}_{t}^{(0,l,k)}), Z^{(0,l,k)} \big) \Big]  \Big) \\
 & = m^{n-l} \mathrm{Var}  \Big( \frac{t}{m^{n-l}}\Big[  f_i\big( \mathcal{X}_{l,m}^0 (\mathcal{R}_{t}^0), Z^0 \big)
 - f_i\big( \mathcal{X}_{l-1,m}^1(\mathcal{R}_{t}^0), Z^0 \big) \Big]  \Big) \\
 & =   \frac{m^{n-l}t^2}{m^{2(n-l)}} \mathrm{Var} \Big(  f_i\big( \mathcal{X}_{l,m}^0 (\mathcal{R}_{t}^0), Z^0 \big)  - f_i\big( \mathcal{X}_{l-1,m}^1(\mathcal{R}_{t}^0), Z^0 \big)   \Big)\\
 &\leq \frac{t^2}{m^{n-l}} \E \big[ \big| f_i\big( \mathcal{X}_{l,m}^0 (\mathcal{R}_{t}^0), Z^0 \big)  - f_i\big( \mathcal{X}_{l-1,m}^1(\mathcal{R}_{t}^0), Z^0 \big) \big| ^2  \big].
\end{split}
\end{equation}
This and \eqref{leeffe} ensure that for all $t \in [0,T]$, $n \in \N$, $l \in \N\cap [1,n)$ it holds that
\begin{equation} \label{eq3}
\begin{split} 
 & \sum_{i=1}^d \sum_{k=1}^{m^{n-l}}  \mathrm{Var} \bigg( \frac{t}{m^{n-l}}\Big[  f_i\big( \mathcal{X}_{l,m}^{(0,l,k)} (\mathcal{R}_{t}^{(0,l,k)}), Z^{(0,l,k)} \big)
  - f_i\big( \mathcal{X}_{l-1,m}^{(0,l,-k)}(\mathcal{R}_{t}^{(0,l,k)}), Z^{(0,l,k)} \big) \Big]  \bigg) \\
&\leq  \sum_{i=1}^d \frac{t^2}{m^{n-l}} \E \big[ | f_i\big( \mathcal{X}_{l,m}^0 (\mathcal{R}_{t}^0), Z^0 \big)  - f_i\big( \mathcal{X}_{l-1,m}^1(\mathcal{R}_{t}^0), Z^0 \big)  | ^2  \big]\\
&= \frac{t^2}{m^{n-l}} \E \bigg[  \sum_{i=1}^d | f_i\big( \mathcal{X}_{l,m}^0 (\mathcal{R}_{t}^0), Z^0 \big)  - f_i\big( \mathcal{X}_{l-1,m}^1(\mathcal{R}_{t}^0), Z^0 \big) | ^2  \bigg]\\
&= \frac{t^2}{m^{n-l}} \E \big[  \| F \big( \mathcal{X}_{l,m}^0 (\mathcal{R}_{t}^0), Z^0 \big)  - F\big( \mathcal{X}_{l-1,m}^1(\mathcal{R}_{t}^0), Z^0 \big) \| ^2  \big].
\end{split}
\end{equation}
Combining this, \eqref{eq1}, and \eqref{eq2}  ensures that for all $t \in [0,T]$, $n \in \N$ it holds that
\begin{equation} \label{eq4}
\begin{split}
\E\big[ \|{\mathcal{X}}_{n,m}^0 (t) - \E[{\mathcal{X}}_{n,m}^0 (t)] \|^2 \big] 
&= \sum_{i=1}^d  \Bigg[     
 \sum_{l=1}^{n-1} 
  \sum_{k=1}^{m^{n-l}}  \mathrm{Var} \bigg( \frac{t}{m^{n-l}}\Big[  f_i\big( \mathcal{X}_{l,m}^{(0,l,k)} (\mathcal{R}_{t}^{(0,l,k)}), Z^{(0,l,k)} \big) \\
&\ \ \  - f_i\big( \mathcal{X}_{l-1,m}^{(0,l,-k)}(\mathcal{R}_{t}^{(0,l,k)}), Z^{(0,l,k)} \big) \Big]  \bigg)
+ \mathrm{Var} \bigg(\frac{t}{m^n} \sum_{k=1}^{m^n}  f_i\big(\xi,Z^{(0,0,k)}\big)\bigg) \Bigg] \\
& =   \sum_{l=1}^{n-1}  \Bigg[ \sum_{i=1}^{d} 
  \sum_{k=1}^{m^{n-l}}  \mathrm{Var} \bigg( \frac{t}{m^{n-l}}\Big[  f_i \big( \mathcal{X}_{l,m}^{(0,l,k)} (\mathcal{R}_{t}^{(0,l,k)}), Z^{(0,l,k)} \big) \\
&\ \ \  - f_i\big( \mathcal{X}_{l-1,m}^{(0,l,-k)}(\mathcal{R}_{t}^{(0,l,k)}), Z^{(0,l,k)} \big) \Big]  \bigg) \Bigg]
+ \sum_{i=1}^d  \Bigg[   \mathrm{Var} \bigg(\frac{t}{m^n} \sum_{k=1}^{m^n}  f_i\big(\xi,Z^{(0,0,k)}\big)\bigg) \Bigg] \\
&\leq    \sum_{l=1}^{n-1} \frac{t^2}{m^{n-l}} \E \big[  \| F \big( \mathcal{X}_{l,m}^0 (\mathcal{R}_{t}^0), Z^0 \big)  - F\big( \mathcal{X}_{l-1,m}^1(\mathcal{R}_{t}^0), Z^0 \big) \| ^2  \big] + 
\frac{T^2}{m^n}
 \E \big[ \| F\big(\xi,Z^0 \big) \| ^2 \big].
\end{split}
\raisetag{130pt}
\end{equation}
Furthermore, note that \eqref{lipsch}, the fact that for all $x,y \in \R^d$ it holds that  $\| x+ y \|^2 \leq 2(\| x \|^2 +\| y \|^2)$, items~\eqref{it:propMLP:item1}, \eqref{it:propMLP:item2}, and \eqref{it:propMLP:item4} in \cref{propMLP},
the hypothesis that $(Z^\theta)_{\theta \in \Theta}$ are i.i.d., the hypothesis that ${(\mathfrak{r}^\theta)}_{\theta \in \Theta}$ are i.i.d., the hypothesis that $(Z^\theta)_{\theta \in \Theta}$ and
${(\mathfrak{r}^\theta)}_{\theta \in \Theta}$ are independent,
and \cref{l3.5} 
assure that  for all $t \in [0,T]$, $n \in \N$ it holds that
\begin{equation}
\begin{split}
&  \sum_{l=1}^{n-1} \frac{t^2}{m^{n-l}} \E \big[ \| F \big( \mathcal{X}_{l,m}^0 (\mathcal{R}_{t}^0), Z^0 \big)  - F\big( \mathcal{X}_{l-1,m}^1(\mathcal{R}_{t}^0), Z^0 \big) \| ^2  \big] \\
&\leq  \sum_{l=1}^{n-1} \frac{t^2}{m^{n-l}}  \E \big[ L^2
  \|  \mathcal{X}_{l,m}^0 (\mathcal{R}_{t}^0)  -  \mathcal{X}_{l-1,m}^1(\mathcal{R}_{t}^0) \| ^2  \big]\\
 & \leq  \sum_{l=1}^{n-1} \frac{2L^2 t^2}{m^{n-l}} \big( \E \big[
  \|  \mathcal{X}_{l,m}^0 (\mathcal{R}_{t}^0)  -  X(\mathcal{R}_{t}^0) \| ^2  \big] + \E \big[
  \|  \mathcal{X}_{l-1,m}^0 (\mathcal{R}_{t}^0)  -  X(\mathcal{R}_{t}^0) \| ^2  \big] \big). \\
\end{split}
\end{equation}
The hypothesis that $(\mathfrak{r}^\theta)_{\theta \in \Theta}$ are independent, the hypothesis that $(\mathfrak{r}^\theta)_{\theta \in \Theta}$ and $(Z^\theta)_{\theta \in \Theta}$ are independent,
items~\eqref{it:propMLP:item1} and \eqref{it:propMLP:item2} in \cref{propMLP}, and \cref{l3.7}
hence imply that  for all $t \in [0,T]$, $n \in \N$ it holds that
\begin{equation}
\begin{split}
&  \sum_{l=1}^{n-1} \frac{t^2}{m^{n-l}} \E \big[  \| F \big( \mathcal{X}_{l,m}^0 (\mathcal{R}_{t}^0), Z^0 \big)  - F\big( \mathcal{X}_{l-1,m}^1(\mathcal{R}_{t}^0), Z^0 \big) \| ^2  \big] \\
& \leq  \sum_{l=1}^{n-1} \frac{2L^2 t^2}{m^{n-l}} \big( \E \big[
  \|  \mathcal{X}_{l,m}^0 (\mathcal{R}_{t}^0)  -  X(\mathcal{R}_{t}^0) \| ^2  \big] + \E \big[
  \|  \mathcal{X}_{l-1,m}^0 (\mathcal{R}_{t}^0)  -
  X(\mathcal{R}_{t}^0) \| ^2  \big] \big) \\
  &\leq \sum_{l=1}^{n-1} \frac{2L^2 t}{m^{n-l}} \big(t\, \E \big[
  \|  \mathcal{X}_{l,m}^0 (\mathcal{R}_{t}^0)  -  X(\mathcal{R}_{t}^0) \| ^2  \big] + t\,\E \big[
  \|  \mathcal{X}_{l-1,m}^0 (\mathcal{R}_{t}^0)  -
  X(\mathcal{R}_{t}^0) \| ^2  \big] \big) \\
 & \leq  \sum_{l=1}^{n-1} \frac{2L^2 t}{m^{n-l}} \left(\int_0^t \E \big[ \|  \mathcal{X}_{l,m}^0 (r)  -  X(r) \| ^2  \big]\, dr + \int_0^t 
  \E \big[ \|  \mathcal{X}_{l-1,m}^0 (r)  -  X(r)\| ^2  \big]\, dr \right) \\
  & \leq  \sum_{l=1}^{n-1} \frac{2L^2 T}{m^{n-l}} \left( \int_0^t \E \big[ \|  \mathcal{X}_{l,m}^0 (r)  -  X(r) \| ^2  \big]\, dr + \int_0^t 
  \E \big[ \|  \mathcal{X}_{l-1,m}^0 (r)  -  X(r) \| ^2  \big]\, dr \right).
\end{split}
\raisetag{110pt}
\end{equation}
This and \eqref{eq4} imply that  for all $t \in [0,T]$, $n \in \N$ it holds that
\begin{equation}
\begin{split}
 & \E\big[ \|{\mathcal{X}}_{n,m}^0 (t) - \E[{\mathcal{X}}_{n,m}^0 (t)] \|^2 \big] \\
 & \leq   \sum_{l=1}^{n-1} \frac{t^2}{m^{n-l}} \E \big[  \| F \big( \mathcal{X}_{l,m}^0 (\mathcal{R}_{t}^0), Z^0 \big)  - F\big(  \mathcal{X}_{l-1,m}^1(\mathcal{R}_{t}^0), Z^0 \big) \| ^2  \big] + \frac{T^2}{m^n} \big(
 \E \big[ \| F\big(\xi,Z^0 \big) \| ^2 \big] \big)\\
 & \leq   \sum_{l=1}^{n-1} \frac{2L^2 T}{m^{n-l}} \left(\int_0^t \E \big[
  \|  \mathcal{X}_{l,m}^0 (r)  -  X(r) \| ^2  \big]\, dr + \int_0^t 
  \E \big[
  \|  \mathcal{X}_{l-1,m}^0 (r)  -  X(r) \| ^2  \big]\, dr \right)
 + \frac{T^2}{m^n}
 \E \big[ \| F\big(\xi,Z^0 \big) \| ^2 \big].
\end{split}
\end{equation}
Combining this, e.g.,
the bias-variance type decomposition of the mean square error in   \cite[Lemma 2.2]{jentzen2018lower}, \eqref{eq0}, \eqref{Csquared}, and
item~\eqref{it:expMLP:item1} in \cref{expMLP} demonstrates that  for all $t \in [0,T]$, $n \in \N$ it holds that
\begin{equation} \label{alltogether}
\begin{split}
 &\E \big[ \| {\mathcal{X}}_{n,m}^0 (t) - X(t) \| ^2 \big]
 = \E \big[  \| {\mathcal{X}}_{n,m}^0 (t) - {\E} [{\mathcal{X}}_{n,m}^0(t)] \|^2 \big]  +
  \| {\E} [{\mathcal{X}}_{n,m}^0(t)] - X(t) \|^2 \\[1ex]
 & \leq \sum_{l=1}^{n-1} \frac{2L^2 T}{m^{n-l}} \left(\int_0^t \E \big[ \|  \mathcal{X}_{l,m}^0 (r)  -  X(r) \| ^2  \big]\, dr + \int_0^t 
  \E \big[
 \|  \mathcal{X}_{l-1,m}^0 (r)  -  X(r) \| ^2  \big]\, dr \right)\\
 &  \ \ \ + \frac{T^2}{m^n} 
 \E \big[ \| F\big(\xi,Z^0 \big) \| ^2 \big] + L^2 T  \int_0^t \E \big[  \|  \mathcal{X}_{n-1,m}^0 (r) - X(r) \| ^2   \big]\,dr  \\
  &\leq
     \sum_{l=1}^{n-1} \frac{2L^2 T}{m^{n-l}} \left(\int_0^t \E \big[
  \|  \mathcal{X}_{l,m}^0 (r)  -  X(r) \| ^2  \big]\, dr \right)
  + 
  \sum_{l=0}^{n-2} \frac{2L^2 T}{m^{n-(l+1)}} \left(\int_0^t \E \big[
  \|  \mathcal{X}_{l,m}^0 (r)  -  X(r) \| ^2  \big]\, dr \right)
  \\
  & \quad 
  +  \frac{C^2}{m^n} 
  +   L^2 T  \int_0^t \E \big[  \|  \mathcal{X}_{n-1,m}^0 (r) - X(r) \| ^2   \big]\, dr \\
  & \leq   \sum_{l=1}^{n-1}
   \frac{2L^2 T}{m^{n-l}} \left( \int_0^t \E \big[
  \|  \mathcal{X}_{l,m}^0 (r) -  X(r) \| ^2  \big]\, dr \right)
  + \sum_{l=0}^{n-1}
   \frac{2L^2 T}{m^{n-(l+1)}} \left( \int_0^t \E \big[
 \|  \mathcal{X}_{l,m}^0 (r) -  X(r) \| ^2  \big]\, dr \right) + \frac{C^2}{m^n} \\
  & \leq 
  \frac{C^2}{m^n} +   \sum_{l=0}^{n-1}
   \frac{4L^2 T}{m^{n-(l+1)}} \left( \int_0^t \E \big[
 \|  \mathcal{X}_{l,m}^0 (r) -  X(r) \| ^2  \big]\, dr \right). 
\end{split}
\raisetag{190pt}
\end{equation}
Next let $\epsilon_{n,k} \in [0,\infty]$, $n,k \in \N_0$, satisfy for all $n,k \in \N_0$ that 
\begin{equation} \label{epsilon0}
 \epsilon_{n,0} = \E \big[ \| {\mathcal{X}}_{n,m}^0 (T) - X(T) \| ^2 \big] \qquad\mathrm{and}\qquad
 \epsilon_{n,k+1} = \frac{1}{T^{k+1}} \int_0^T \frac{(T-t)^k}{k!} \E \big[ \| {\mathcal{X}}_{n,m}^0 (t) - X(t) \| ^2 \big]\, dt.
\end{equation}
 Note that \eqref{alltogether} and \eqref{epsilon0} imply that for all $n \in \N$ it holds that
\begin{equation}\label{n,0}
\begin{split}
 \epsilon_{n,0} & = \E \big[ \| {\mathcal{X}}_{n,m}^0 (T) - X(T) \| ^2 \big] 
 \\
 & \leq 
 \frac{C^2}{m^n} +   \sum_{l=0}^{n-1}
 \frac{4L^2 T^2}{m^{n-(l+1)}T} \left( \int_0^T \E \big[
 \|  \mathcal{X}_{l,m}^0 (r) -  X(r) \| ^2  \big]\,dr \right) 
 = \frac{C^2}{m^n(0!)} + 4L^2 T^2  \sum_{l=0}^{n-1}
 \frac{\epsilon_{l,1}}{m^{n-(l+1)}} .
\end{split}
\end{equation}
Moreover, observe that \cref{l3.14} (applied for every $n \in \N_0$, $k \in \N$ with $T \curvearrowleft T$, $k\curvearrowleft k$,
$(U(r))_{r \in [0,T]} \curvearrowleft ( \E[
  \|  \mathcal{X}_{n,m}^0 (r)$ $- X(r) \|^2 ])_{r \in [0,T]}$ in the notation of \cref{l3.14}) and \eqref{epsilon0} demonstrate that for all $n \in \N_0$, $k \in \N$
  it holds that
\begin{equation}
\begin{split}
 &\frac{1}{T^k}\left(\int_0^T \frac{(T-t)^{k-1}}{(k-1)!}
 \int_0^t \E \big[
  \|  \mathcal{X}_{n,m}^0 (r) -  X(r) \| ^2  \big]\, dr\, dt \right)\\
 &= \frac{T}{T^{k+1}} \left( \int_0^T \frac{(T-t)^k}{k!} \E \big[
  \|  \mathcal{X}_{n,m}^0 (t) -  X(t) \| ^2  \big] \,dt
  \right) = T \epsilon_{n,k+1}.
  \end{split}
\end{equation}
This, \eqref{alltogether}, and \eqref{epsilon0} imply that for all $n, k \in \N$ it holds that
\begin{equation} \label{n,k}
\begin{split}
  \epsilon_{n,k} & = \frac{1}{T^{k}} \left( \int_0^T
  \frac{(T-t)^{(k-1)}}{(k-1)!} \E \big[ \| {\mathcal{X}}_{n,m}^0 (t) - X(t) \| ^2 \big]\, dt \right) \\
  & \leq
  \frac{C^2}{T^k m^n}  \left( \int_0^T
  \frac{(T-t)^{(k-1)}}{(k-1)!} \, dt  \right) 
  + \sum_{l=0}^{n-1}
   \frac{4L^2 T}{m^{n-(l+1)}T^k} \left(  \int_0^T
  \frac{(T-t)^{(k-1)}}{(k-1)!} \int_0^t \E \big[
  \|  \mathcal{X}_{l,m}^0 (r) -  X(r) \| ^2  \big]\, dr \, dt\right) \\ 
  & =  \frac{C^2}{T^k m^n} \frac{T^k}{k!} + 4L^2T\left[\sum_{l=0}^{n-1} \frac{T \epsilon_{l,k+1}}{m^{n-(l+1)}}
  \right] 
  =
  \frac{C^2}{m^n (k!)} + 4L^2T^2\left[\sum_{l=0}^{n-1} \frac{ \epsilon_{l,k+1}}{m^{n-(l+1)}}
  \right].
\end{split}
\end{equation}
Furthermore, note that \cref{l3.3} and \eqref{C} prove that for all $t \in [0,T]$ it holds that 
\begin{equation}\label{C^2inequal}
\left\| X(t) - \xi \right\|^2 \leq T^2  {\E}  \big[ \| F(\xi, Z^0) \|^2 \big]  e^{2LT}= C^2.
\end{equation}
The fact that for all $t \in [0,T]$ it holds that $\mathcal{X}^0_{0,m}(t) = \xi$ and \eqref{epsilon0} hence assure that 
\begin{equation} \label{0,0}
\epsilon_{0,0} =\E \big[ \| X(T) - \xi \| ^2 \big]
=  \left\| X(T) - \xi \right\| ^2 \leq C^2 = 
\frac{C^2}{m^0 0!}.
\end{equation}
Moreover, observe that \eqref{epsilon0}, \eqref{C^2inequal}, and the fact that for all $t \in [0,T]$ it holds that $\mathcal{X}^0_{0,m}(t) = \xi$ ensure that for all $k \in \N$
it holds that 
\begin{equation}
\begin{split}
&\epsilon_{0,k} = \frac{1}{T^{k}} \int_0^T
\frac{(T-t)^{(k-1)}}{(k-1)!} \E \big[ \| X(t) - \xi \| ^2 \big]\, dt
\leq
\frac{C^2}{T^{k}} \int_0^T
\frac{(T-t)^{(k-1)}}{(k-1)!} \, dt 
= \frac{C^2}{T^k} \frac{T^k}{k!} = \frac{C^2}{m^0 (k!)}.
\end{split}
\end{equation}
Combining this, \eqref{n,0}, \eqref{n,k}, and \eqref{0,0} demonstrates that for all $n,k \in \N_0$ it holds that
\begin{equation}
\begin{split}
 \epsilon_{n,k}
 & \leq \frac{C^2}{m^n (k!)} + 4L^2T^2\left[\sum_{l=0}^{n-1} \frac{ \epsilon_{l,k+1}}{m^{n-(l+1)}} \right] \\
 & =  \frac{C^2 m^k}{m^{n+k}(k!)} + 4L^2T^2
 \left[ \sum_{l=0}^{n-1} \frac{ \epsilon_{l,k+1}}{m^{n-(l+1)}}
  \right] 
  \leq
   \frac{C^2 e^m}{m^{n+k}} + 4L^2T^2
\left[ \sum_{l=0}^{n-1} \frac{ \epsilon_{l,k+1}}{m^{n-(l+1)}}
  \right].
\end{split}
\end{equation}
\cref{l3.12} (applied with $\alpha \curvearrowleft C^2 e^m$, $\beta \curvearrowleft 4L^2T^2$, $M \curvearrowleft m$, $(\epsilon_{n,k})_{n,k \in \N_0} \curvearrowleft (\epsilon_{n,k})_{n,k \in \N_0}$ in the notation of \cref{l3.12})
therefore proves that for all $n,k \in \N_0$ it holds that
\begin{equation}
\epsilon_{n,k} \leq 
\frac{C^2 e^m (1+4L^2T^2)^n}{m^{n+k}}.
\end{equation}
This and \eqref{epsilon0} imply that for all $n \in \N_0$ it holds that
\begin{equation}
 {\E} \big[ \| X(T) - {\mathcal{X}}_{n,m}^0(T) \|^2\big] 
= \epsilon_{n,0} 
\leq \frac{C^2 e^m (1+4L^2T^2)^n}{m^{n}}.
\end{equation}
The fact that for all $x,y \in [0,\infty)$ it holds that 
$\sqrt{x+y} \leq \sqrt{x} + \sqrt{y}$ and \eqref{C}  hence demonstrate
that for all $n \in \N_0$ it holds that
\begin{equation}
\begin{split}
\big( {\E} \big[ \| X(T) - {\mathcal{X}}_{n,m}^0(T) \|^2\big] 
\big)^{1/2}
  &\leq 
 \frac{C e^{m/2} (\sqrt{1+4L^2T^2})^n}{m^{n/2}}
 \leq 
 \frac{C(1+2LT)^n e^{m/2}}{m^{n/2}} 
 \\
 & =  \frac{ T
 \big( {\E}  \big[ \|F(\xi, Z^0)\| ^2 \big] \big) ^{1/2}(1+2LT)^n e^{(LT + m/2)}}{m^{n/2}} .
\end{split}
\end{equation}
The proof of \cref{thmerrorestimate} is thus completed.
\end{proof}

\section{Complexity analysis for MLP approximation algorithms}\label{section3}

In this section we provide in \cref{THEOREM} in \cref{subsectionoverallcompl} below an overall complexity analysis for the MLP approximations introduced in \cref{setting} in \cref{subsectionsetting} above.

Our proof of \cref{THEOREM} combines the error analysis in \cref{thmerrorestimate} with the bound for the computational cost of the proposed MLP approximations \cref{lemma1} in \cref{subsection3.1} and the elementary results in \crefrange{l4.3}{Lemma2.4}.
Proofs for \cref{lemma1} and \cref{l4.3} can be found, e.g., in \cite[Lemma 3.14]{beck2020overcoming} and \cite[Lemma 4.3]{beckAllenCahn} and \cref{Lemma2.4} is a slightly modified version of \cite[Lemma 4.2]{beckAllenCahn}.

\subsection{Computational cost analysis for MLP approximation algorithms}\label{subsection3.1}

\begin{lemma} \label{lemma1}
Let ${(RV_{n,m})}_{n \in \N_0, m \in \N} \subseteq {\N}_0$ satisfy for all $n, m \in {\N}$ that 
$RV_{0,m} = 0$ and
$RV_{n,m} \leq m^n + \sum_{l=1}^{n-1} [ m^{n-l}( 1 + RV_{l,m} + RV_{l-1,m}) ]$.
Then it holds for all $n,m \in {\N}$ that $RV_{n,m}\leq {(3m)}^n$.
\end{lemma}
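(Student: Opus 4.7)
The plan is to prove the bound $RV_{n,m} \leq (3m)^n$ by strong induction on $n \in \N$ with $m \in \N$ held arbitrary but fixed throughout.

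For the base case $n = 1$, the recursion hypothesis reduces to $RV_{1,m} \leq m + \sum_{l=1}^{0}[\cdot] = m$, and the bound $m \leq 3m = (3m)^1$ is immediate.

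For the inductive step, I fix $n \geq 2$ and assume that $RV_{l,m} \leq (3m)^l$ for every $l \in \{0,1,\ldots,n-1\}$. Substituting into the recursion yields
\begin{equation}
RV_{n,m} \leq m^n + \sum_{l=1}^{n-1} m^{n-l}\bigl(1 + (3m)^l + (3m)^{l-1}\bigr).
\end{equation}
The key algebraic simplification I would use is the elementary bound
\begin{equation}
1 + (3m)^l + (3m)^{l-1} \leq (3m+2)(3m)^{l-1}
\end{equation}
which holds for all $l \geq 1$ and $m \geq 1$ (since $1 \leq (3m)^{l-1}$). This collapses the mixed sum to a single geometric sum via
\begin{equation}
m^{n-l}(3m)^{l-1} = 3^{l-1} m^{n-1},
\end{equation}
so that
\begin{equation}
RV_{n,m} \leq m^n + (3m+2)\, m^{n-1} \sum_{l=1}^{n-1} 3^{l-1} = m^n + (3m+2)\, m^{n-1} \cdot \frac{3^{n-1}-1}{2}.
\end{equation}
Applying the crude bound $3m+2 \leq 5m$ (valid for $m \geq 1$) and $3^{n-1} - 1 \leq 3^{n-1}$ yields
\begin{equation}
RV_{n,m} \leq m^n + \frac{5 m^n \cdot 3^{n-1}}{2} = \frac{m^n (2 + 5 \cdot 3^{n-1})}{2}.
\end{equation}
It then remains to check that $2 + 5 \cdot 3^{n-1} \leq 6 \cdot 3^{n-1} = 2 \cdot 3^n$, i.e.\ that $2 \leq 3^{n-1}$, which holds for all $n \geq 2$. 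This completes the inductive step and hence establishes $RV_{n,m} \leq (3m)^n$ for all $n,m \in \N$.

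The main obstacle is purely bookkeeping: the recursion mixes an $m^n$ offset, a $(3m)^l$ term, and a $(3m)^{l-1}$ term, so a careful geometric consolidation is required. The clean bound $1 + (3m)^l + (3m)^{l-1} \leq (3m+2)(3m)^{l-1}$ is what avoids splitting into separate cases for $m = 1$ and $m \geq 2$, and the final inequality $2 \leq 3^{n-1}$ explains why the base case $n=1$ must be treated separately. No deeper ideas are needed.
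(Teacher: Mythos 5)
Your proof is correct: the base case, the consolidation $1+(3m)^l+(3m)^{l-1}\leq(3m+2)(3m)^{l-1}$, the geometric summation, and the final check $2\leq 3^{n-1}$ all hold as stated. Note that the paper does not prove this lemma itself but refers to \cite[Lemma 3.14]{beck2020overcoming}; your strong induction on $n$ is the same standard argument used there, differing only in minor bookkeeping, so nothing further is needed.
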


\begin{lemma}\label{l4.3}
	Let $\alpha \in [1, \infty)$. Then it holds for all $k\in \N$ that 
	$\sum_{n =1}^k (\alpha n)^n \leq  2(\alpha k)^{k}$.
\end{lemma}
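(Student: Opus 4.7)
The plan is to prove \cref{l4.3} by showing that the summands $a_n := (\alpha n)^n$ grow sufficiently fast so that the whole sum is dominated by the last term, and then bounding the remaining geometric tail. I would proceed by induction on $k \in \N$, but the underlying mechanism is that for $\alpha \in [1, \infty)$ and $n \in \N$ one has the ratio bound
\begin{equation}
\frac{(\alpha(n+1))^{n+1}}{(\alpha n)^n} \;=\; \alpha(n+1)\Bigl(1 + \tfrac{1}{n}\Bigr)^{\!n} \;\geq\; 2,
\end{equation}
since $\alpha(n+1) \geq 2$ and $(1 + 1/n)^n \geq 1$ (in fact $\geq 2$) for $n \geq 1$. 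Equivalently, $a_n \leq \tfrac12 a_{n+1}$ for all $n \in \N$.

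First, I would handle the base case $k = 1$, where $\sum_{n=1}^1 (\alpha n)^n = \alpha \leq 2\alpha = 2(\alpha \cdot 1)^1$, which is immediate. Next, for the induction step, assume $\sum_{n=1}^k (\alpha n)^n \leq 2(\alpha k)^k$. Then
\begin{equation}
\sum_{n=1}^{k+1} (\alpha n)^n \;\leq\; 2(\alpha k)^k + (\alpha(k+1))^{k+1}.
\end{equation}
Using the ratio bound displayed above with $n = k$, we get $2(\alpha k)^k \leq (\alpha(k+1))^{k+1}$, which yields $\sum_{n=1}^{k+1} (\alpha n)^n \leq 2(\alpha(k+1))^{k+1}$, completing the induction.

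Alternatively, and perhaps more transparently, from $a_n \leq \tfrac12 a_{n+1}$ one deduces by a downward iteration that $(\alpha n)^n \leq 2^{-(k-n)} (\alpha k)^k$ for all $n \in \{1, 2, \dots, k\}$, so that
\begin{equation}
\sum_{n=1}^k (\alpha n)^n \;\leq\; (\alpha k)^k \sum_{n=1}^k 2^{-(k-n)} \;=\; (\alpha k)^k \sum_{j=0}^{k-1} 2^{-j} \;\leq\; 2(\alpha k)^k,
\end{equation}
which is the desired estimate. The only nontrivial step is the ratio bound, and it rests entirely on the elementary inequalities $\alpha \geq 1$ and $(1 + 1/n)^n \geq 1$; accordingly I do not anticipate any serious obstacle and expect the proof to be short.
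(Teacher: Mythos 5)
Your proof is correct. The paper does not include its own proof of \cref{l4.3} (it refers to the literature, where the argument is precisely the one you give: the ratio bound $(\alpha(n+1))^{n+1}/(\alpha n)^n = \alpha(n+1)(1+\tfrac1n)^n \geq 2$ followed by summing the resulting geometric series), so your argument matches the intended one and is complete.
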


\subsection{Overall complexity analysis for MLP approximation algorithms}\label{subsectionoverallcompl}

\begin{lemma} \label{finiteexp}
Let $d \in {\N}$, $T, L \in [0,\infty)$, $\xi \in \R^d$, $ X \in \mathcal{C}([0,T],\R^d)$,
let $ \left\|  \cdot \right\| \! \colon \R^d \to [0,\infty)$ be a norm on $\R^d$,
 let $(S,\mathcal{S})$ be a
measurable space, let $F\colon{\R}^d\times
S\rightarrow {\R}^d$ be $(\mathcal{B}({\R}^d) \otimes \mathcal{S}) / \mathcal{B}({\R}^d)$-measurable,
assume for all $x,y \in {\R}^d$, $s \in S$ that 
$ \| F(x,s)-F(y,s)  \| \leq L \| x-y \| $,
 let $(\Omega,\mathcal{F},{\P})$ be a probability space, 
let $Z \colon\Omega\rightarrow S$ 
be a random variable, and assume that $\E[ \|F(\xi, Z) \|^2] < \infty$.
Then 
\begin{enumerate}[(i)]
\item\label{it:finiteexp:item1} for all $t \in [0,T]$ it holds that
 $[0,t] \times \Omega \ni (r, \omega)
\mapsto F(X(r), Z(\omega)) \in \R^d$ is
$(\mathcal{B}([0,t]) \otimes \mathcal{F})/\mathcal{B}(\R^d)$-measurable,
\item\label{it:finiteexp:item2} for all $t \in [0,T]$ it holds that $\E[ \|F(X(t),Z)\|]< \infty$, and
\item\label{it:finiteexp:item3} for all $t \in [0,T]$ it holds that 
$ \int_0^t{\E[ \| F (X(r), Z) \|]} \, dr < \infty $.
\end{enumerate}
\end{lemma}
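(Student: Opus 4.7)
The plan is to address the three items in sequence, with the measurability claim in item~\eqref{it:finiteexp:item1} feeding into the integrability claims in items~\eqref{it:finiteexp:item2} and \eqref{it:finiteexp:item3}. For item~\eqref{it:finiteexp:item1}, I would first observe that the hypothesis $X \in \mathcal{C}([0,T],\R^d)$ implies $X$ is $\mathcal{B}([0,T])/\mathcal{B}(\R^d)$-measurable, so that the restriction $[0,t] \ni r \mapsto X(r) \in \R^d$ is $\mathcal{B}([0,t])/\mathcal{B}(\R^d)$-measurable. Since $Z$ is $\mathcal{F}/\mathcal{S}$-measurable, the map $[0,t]\times\Omega \ni (r,\omega) \mapsto (X(r), Z(\omega)) \in \R^d \times S$ is $(\mathcal{B}([0,t])\otimes\mathcal{F})/(\mathcal{B}(\R^d)\otimes\mathcal{S})$-measurable. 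Composing with $F$ (which is measurable with respect to $\mathcal{B}(\R^d)\otimes\mathcal{S}$ by hypothesis) yields item~\eqref{it:finiteexp:item1}.

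For item~\eqref{it:finiteexp:item2}, I would use the standard Lipschitz-plus-triangle-inequality bound: for all $x \in \R^d$, $s \in S$,
\begin{equation}
\|F(x,s)\| \leq \|F(\xi,s)\| + \|F(x,s) - F(\xi,s)\| \leq \|F(\xi,s)\| + L\|x-\xi\|.
\end{equation}
Setting $x = X(t)$, $s = Z$ and taking expectations gives $\E[\|F(X(t),Z)\|] \leq \E[\|F(\xi,Z)\|] + L\|X(t) - \xi\|$. An application of Jensen's inequality bounds $\E[\|F(\xi,Z)\|] \leq (\E[\|F(\xi,Z)\|^2])^{1/2} < \infty$ by hypothesis, and $\|X(t) - \xi\|$ is finite since $X(t) \in \R^d$, so item~\eqref{it:finiteexp:item2} follows.

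For item~\eqref{it:finiteexp:item3}, item~\eqref{it:finiteexp:item1} combined with Tonelli's theorem ensures that $r \mapsto \E[\|F(X(r),Z)\|]$ is $\mathcal{B}([0,t])/\mathcal{B}([0,\infty])$-measurable. The pointwise bound from the previous step, now applied for every $r \in [0,t]$, yields
\begin{equation}
\int_0^t \E[\|F(X(r),Z)\|]\, dr \leq t\, \big(\E[\|F(\xi,Z)\|^2]\big)^{1/2} + L \int_0^t \|X(r) - \xi\|\, dr,
\end{equation}
and the second integral is finite because $[0,t] \ni r \mapsto \|X(r) - \xi\|$ is continuous, hence bounded on the compact interval $[0,t]$.

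There is no real obstacle here: the result is a routine bookkeeping exercise combining joint measurability of a composition, the Lipschitz hypothesis on $F$, the $L^2$-integrability of $F(\xi, Z)$, and the continuity (hence boundedness on compacts) of $X$. The only mild care required is to ensure the joint measurability in item~\eqref{it:finiteexp:item1} is set up cleanly, since items~\eqref{it:finiteexp:item2} and \eqref{it:finiteexp:item3} tacitly require that the expressions $\E[\|F(X(r),Z)\|]$ are well defined and measurable in $r$.
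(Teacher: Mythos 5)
Your proof is correct, and for items (i) and (ii) it coincides with the paper's argument essentially verbatim: the same composition-of-measurable-maps reasoning for joint measurability, and the same bound $\|F(x,s)\| \leq \|F(\xi,s)\| + L\|x-\xi\|$ combined with Jensen's inequality for the $L^1$-bound at a fixed time. The only divergence is in item (iii). The paper does not invoke Tonelli there; instead it shows directly that the map $[0,t] \ni r \mapsto \E[\|F(X(r),Z)\|] \in \R$ is continuous, via the estimate $|\E[\|F(X(r),Z)\|] - \E[\|F(X(s),Z)\|]| \leq L\|X(r)-X(s)\|$ and the continuity of $X$, and then concludes finiteness of the integral from continuity on a compact interval. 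Your route establishes measurability of the integrand via item (i) and Tonelli and then integrates the uniform pointwise bound $\E[\|F(X(r),Z)\|] \leq (\E[\|F(\xi,Z)\|^2])^{1/2} + L\|X(r)-\xi\|$, using boundedness of $r \mapsto \|X(r)-\xi\|$ on $[0,t]$. Both arguments are complete; the paper's continuity argument yields measurability and boundedness of the integrand in one stroke and gives the (slightly stronger, though unstated in the lemma) conclusion that the integrand is continuous, while yours is marginally more modular in that it reuses the bound already derived for item (ii) and makes the measurability bookkeeping fully explicit.
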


\begin{proof}[Proof of \cref{finiteexp}]
Throughout this proof let $t \in [0,T]$.
Note that the hypothesis that for all $x,y \in {\R}^d$, $s \in S$ it holds that 
$ \| F(x,s)-F(y,s)  \| \leq L \| x-y \| $, the hypothesis that 
$F\colon{\R}^d\times
S\rightarrow {\R}^d$ is $(\mathcal{B}({\R}^d) \otimes \mathcal{S}) / \mathcal{B}({\R}^d)$-measurable,
the hypothesis that $Z$ is $\mathcal{F}/\mathcal{S}$-measurable,
and the hypothesis that $ X \in \mathcal{C}([0,T],\R^d)$
assure that 
$[0,t] \times \Omega \ni (r, \omega)
\mapsto F(X(r), Z(\omega)) \in \R^d$ is
$(\mathcal{B}([0,t]) \otimes \mathcal{F})/\mathcal{B}(\R^d)$-measurable.
This establishes item~\eqref{it:finiteexp:item1}.
In addition, note  that the assumption that for all $x,y \in {\R}^d$, $s \in S$ it holds that 
$ \| F(x,s)-F(y,s)  \| \leq L \| x-y \| $, and the triangle inequality ensure that for all  $x \in {\R}^d$, $s \in S$ it holds that
\begin{equation} \label{lip}
\| F(x,s) \|  \leq \| F(\xi,s) \| + \| F(x,s) -F(\xi,s) \| \leq 
 \| F(\xi,s) \| + L \| x-\xi \|.
\end{equation}
In addition, observe that the hypothesis that $\E[ \|F(\xi, Z) \|^2] < \infty$ and Jensen's inequality assure that 
\begin{equation}
(\E[\|F(\xi,Z) \|])^2 \leq \E [ \|F(\xi, Z) \|^2] < \infty.
\end{equation}
Combining this, \eqref{lip}, and the hypothesis that $ X \in \mathcal{C}([0,T],\R^d)$ implies that 
\begin{equation}
\label{meanfinito}
{{\E}[ \| F (X(t), Z) \| ]} \leq \E[\| F(\xi,Z) \| + L \| X(t)-\xi \|] = \E[\| F(\xi,Z) \| ] + L  \| X(t)-\xi \| < \infty.
\end{equation}
This establishes item~\eqref{it:finiteexp:item2}.
Moreover, observe that \eqref{meanfinito}, the assumption that for all $x,y \in {\R}^d$, $s \in S$ it holds that 
$ \| F(x,s)-F(y,s)  \| \leq L \| x-y \| $, and the fact that for all $a,b \in \R^d$ it holds that $\|a\| - \|b\| \leq \|a-b\|$ 
assure that 
for all $r,s \in [0,t]$ it holds that
\begin{equation}
    \begin{split}
 &| \E[\| F(X(r),Z) \|]-\E[\| F(X(s),Z) \| ]| 
= |\E [\| F(X(r),Z)\| - \| F(X(s),Z) \|]| \\
 & \leq |\E [\| F(X(r),Z) -  F(X(s),Z) \|] | 
 \leq |\E [L \| X(r) -  X(s) \|]| 
 = L   \| X(r) -  X(s) \| .
    \end{split}
\end{equation}
Combining this with the hypothesis that $ X \in \mathcal{C}([0,T],\R^d)$ ensures that  $[0,t] \ni r \mapsto \E[\| F(X(r),Z) \|] \in \R $ is continuous.
This ensures that 
$ \int_0^t{\E[ \| F (X(r), Z) \|]} \, dr < \infty $.
This establishes item~\eqref{it:finiteexp:item3}.
This completes the proof of \cref{finiteexp}.
\end{proof}

\begin{lemma}\label{Lemma2.4}
Let $T,L,C \in [0,\infty)$, $\alpha \in [1,\infty)$, $\mathfrak{N} \in \N_0$, $(\gamma_n)_{n \in \N} \subseteq [0,\infty)$,
$(\epsilon_n)_{n \in \N} \subseteq [0,\infty)$,
assume for all $n \in \N$ that $\gamma_n \leq (\alpha n)^n$ and $\epsilon_n \leq C e^{n/2}(1+2LT)^n n^{-n/2}$,
and let $N=(N_{\varepsilon})_{\varepsilon\in (0,1]}  \colon (0, 1]  \rightarrow [0,\infty]$ satisfy for all $\varepsilon \in (0, 1]$ that
\begin{equation}\label{Nepsilon}
    N_\varepsilon =   \min\! \Bigg( \left\lbrace   n \in \N \colon \sup_{m \in [n,\infty)\cap\N} \Big[ C e^{m/2}(1+2LT)^m m^{-m/2} \Big] \leq \varepsilon \right\rbrace  \cup \lbrace \infty \rbrace\Bigg).
\end{equation}
Then  
\begin{enumerate}[(i)]
\item\label{it:Lemma2.4:item1} for all $\varepsilon \in (0,1]$ it holds that $N_\varepsilon < \infty$ and
\item\label{it:Lemma2.4:item2} for all $\varepsilon \in (0,1]$, $\delta \in (0, \infty)$ it holds that
 $\sup_{n \in  [N_\varepsilon, \infty) \cap \N} 
    \epsilon_n \leq \varepsilon$  and
\begin{equation}
\begin{split}
\sup_{n \in  [1,N_\varepsilon + \mathfrak{N}] \cap \N}  \gamma_n
\leq & 
\sup_{n \in \N} 
  \left[ \frac{[\alpha(n + \mathfrak{N}+1)]^{(n + \mathfrak{N}+1)}}{n^{n(1+\delta)}}
  e^{n(1+\delta)}
  (1+2LT)^{n(2+2\delta)} \right] 
  \max \lbrace 1, C^{2+2\delta} \rbrace \varepsilon^{-(2+2\delta)}
    < \infty.
 \end{split}
\end{equation}
\end{enumerate}
\end{lemma}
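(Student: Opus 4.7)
The plan is to prove item~\eqref{it:Lemma2.4:item1} by a direct super-exponential decay argument, to deduce the first supremum bound in item~\eqref{it:Lemma2.4:item2} from the defining property of $N_\varepsilon$, and to obtain the bound on $\sup_{1\le n\le N_\varepsilon+\mathfrak{N}}\gamma_n$ by exploiting the minimality of $N_\varepsilon$ at the index $N_\varepsilon-1$ so as to extract the factor $\varepsilon^{-(2+2\delta)}$.

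For item~\eqref{it:Lemma2.4:item1}, the first step is to rewrite $Ce^{m/2}(1+2LT)^m m^{-m/2}=C\bigl(e(1+2LT)^{2}/m\bigr)^{m/2}$, which tends to $0$ as $m\to\infty$; hence the tail supremum $\sup_{m\in [n,\infty)\cap\N}Ce^{m/2}(1+2LT)^m m^{-m/2}$ also vanishes as $n\to\infty$, so the defining set in~\eqref{Nepsilon} is nonempty and $N_\varepsilon<\infty$ for every $\varepsilon\in(0,1]$. The estimate $\sup_{n\in[N_\varepsilon,\infty)\cap\N}\epsilon_n\le\varepsilon$ in item~\eqref{it:Lemma2.4:item2} then follows immediately by combining the hypothesis $\epsilon_n\le Ce^{n/2}(1+2LT)^n n^{-n/2}$ with the defining property of $N_\varepsilon$.

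The central task is the bound on $\sup_{1\le n\le N_\varepsilon+\mathfrak{N}}\gamma_n$. Since $\alpha\ge 1$, the map $\N\ni n\mapsto(\alpha n)^n$ is non-decreasing, so this supremum is at most $[\alpha(N_\varepsilon+\mathfrak{N})]^{N_\varepsilon+\mathfrak{N}}$. When $N_\varepsilon\ge 2$, the minimality of $N_\varepsilon$ combined with $\sup_{m\ge N_\varepsilon}Ce^{m/2}(1+2LT)^m m^{-m/2}\le\varepsilon$ forces the strict inequality $Ce^{(N_\varepsilon-1)/2}(1+2LT)^{N_\varepsilon-1}(N_\varepsilon-1)^{-(N_\varepsilon-1)/2}>\varepsilon$, which after rearrangement and raising to the power $2+2\delta$ yields
\begin{equation*}
(N_\varepsilon-1)^{(N_\varepsilon-1)(1+\delta)}<C^{2+2\delta}\,\varepsilon^{-(2+2\delta)}\,e^{(N_\varepsilon-1)(1+\delta)}(1+2LT)^{(N_\varepsilon-1)(2+2\delta)}.
\end{equation*}
Setting $n:=N_\varepsilon-1\in\N$ and using the identity $[\alpha(N_\varepsilon+\mathfrak{N})]^{N_\varepsilon+\mathfrak{N}}=\bigl([\alpha(n+\mathfrak{N}+1)]^{n+\mathfrak{N}+1}/n^{n(1+\delta)}\bigr)\cdot n^{n(1+\delta)}$, substitution and passing to the supremum over $n\in\N$ then produce the desired bound. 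The remaining case $N_\varepsilon=1$ is handled separately using $\varepsilon^{-(2+2\delta)}\ge 1$, $\max\lbrace 1,C^{2+2\delta}\rbrace\ge 1$, and the fact that the $n=1$ entry of the supremum already dominates $[\alpha(1+\mathfrak{N})]^{1+\mathfrak{N}}$ by monotonicity of $(\alpha k)^k$.

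The main obstacle is verifying finiteness of the supremum on the right-hand side. For this the leading logarithmic asymptotics of the $n$-th summand are $(n+\mathfrak{N}+1)\log(n+\mathfrak{N}+1)-n(1+\delta)\log n\sim -n\delta\log n\to-\infty$ as $n\to\infty$, so the summand tends to $0$ and the supremum is attained at some finite index, which completes the argument.
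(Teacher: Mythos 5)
Your proposal is correct and follows essentially the same route as the paper's proof: establish super-exponential decay for item (i) and for the finiteness of the supremum, read off the $\epsilon_n$ bound from the definition of $N_\varepsilon$, and extract $\varepsilon^{-(2+2\delta)}$ from the minimality of $N_\varepsilon$ at index $N_\varepsilon-1$, with the case $N_\varepsilon=1$ treated separately. The only cosmetic difference is that you argue the decay directly via $C\bigl(e(1+2LT)^2/m\bigr)^{m/2}\to 0$ rather than through the paper's logarithmic estimate using $\ln(t)\leq t-1$.
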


\begin{proof}[Proof of \cref{Lemma2.4}]
Throughout this proof let $\mathfrak{a}_\delta \in [0,\infty]$, $\delta \in (0, \infty)$, satisfy 
for all $\delta \in (0, \infty)$ that
\begin{equation} \label{adelta}
    \mathfrak{a}_\delta = 
C^{2+2\delta} \sup_{n \in \N} 
  \left[ \frac{[\alpha(n + \mathfrak{N}+1)]^{(n + \mathfrak{N}+1)}}{n^{n(1+\delta)}}
  e^{n(1+\delta)}
  (1+2LT)^{n(2+2\delta)} \right].
\end{equation}
Observe that the fact that for all $t \in (0,\infty)$ it holds that
$\mathrm{ln}(t) \leq t-1$ ensures that
\begin{equation}
    \begin{split}
        & \limsup_{n \rightarrow\infty} 
        \big[\mathrm{ln}\big( Ce^{n/2}(1+2LT)^n n^{-n/2}\big)\big] = \limsup_{n \rightarrow\infty} 
        \big[\mathrm{ln}(C) + \frac{n}{2}
        +n\mathrm{ln}(1+2LT) -
        \frac{n}{2}\mathrm{ln}(n)\big]\\
        & \leq \limsup_{n \rightarrow\infty} 
        \big[\mathrm{ln}(C) + \frac{n}{2}
        +n2LT -
        \frac{n}{2}\mathrm{ln}(n)\big]
        = \limsup_{n \rightarrow\infty} 
        \Big[n\mathrm{ln}(n) \Big(\frac{\mathrm{ln}(C)}{n\mathrm{ln}(n)} + \frac{1}{2\mathrm{ln}(n)} +
        \frac{2LT}{\mathrm{ln}(n)} -
        \frac{1}{2} \Big) \Big] = - \infty.
    \end{split}
\end{equation}
This and the fact that $\lim_{t\rightarrow-\infty}e^t = 0$ imply that
\begin{equation}
    \begin{split}
        0 \leq \limsup_{n \rightarrow\infty} 
        \big[ Ce^{n/2}(1+2LT)^n n^{-n/2}\big] 
        = \limsup_{n \rightarrow\infty}
        \big[\exp \big(\mathrm{ln}\big(Ce^{n/2}(1+2LT)^n n^{-n/2}\big) \big) \big] 
        =0.
    \end{split}
\end{equation}
This and \eqref{Nepsilon} ensure that for all $\varepsilon \in (0,1]$ it holds that $N_\varepsilon < \infty$.
This establishes item~\eqref{it:Lemma2.4:item1}.
Next note that the fact that for all $t \in (0,\infty)$ it holds that
$\mathrm{ln}(t) \leq t-1$ ensures that for all $\delta \in (0, \infty)$ it holds that
\begin{equation}
    \begin{split}
        & \limsup_{n \rightarrow\infty} 
        \left[\mathrm{ln}\bigg(\frac{[\alpha(n + \mathfrak{N}+1)]^{(n + \mathfrak{N}+1)}}{n^{n(1+\delta)}}
  e^{n(1+\delta)}
  (1+2LT)^{n(2+2\delta)}\bigg) \right]\\
        & = \limsup_{n \rightarrow\infty} 
        \big[(n+\mathfrak{N}+1)\mathrm{ln}(\alpha) + (n+\mathfrak{N}+1)\mathrm{ln}(n+\mathfrak{N}+1)
        - n(1+\delta)\mathrm{ln}(n) +
        n(1+ \delta)
        +n(2+ 2\delta)\mathrm{ln}(1+2LT) \big]\\
        & \leq \limsup_{n \rightarrow\infty}
        \bigg[n\mathrm{ln}(n) \bigg( \frac{(n+\mathfrak{N}+1)\mathrm{ln}(\alpha)}{n\mathrm{ln}(n)} 
        +\frac{(n+\mathfrak{N}+1)}{n}\frac{\mathrm{ln}(n+\mathfrak{N}+1)}{\mathrm{ln}(n)}
        - (1+\delta) + \frac{(1+\delta)}{\mathrm{ln}(n)} + \frac{(2+ 2\delta)2LT}{\mathrm{ln}(n)}
        \bigg) \bigg] 
        \\
        & = - \infty.
    \end{split}
\end{equation}
This and the fact that $\lim_{t\rightarrow-\infty}e^t = 0$ imply that for all $\delta \in (0, \infty)$ it holds that
\begin{equation}
    \begin{split}
        & 0 \leq \limsup_{n \rightarrow\infty} 
        \left[\frac{[\alpha(n + \mathfrak{N}+1)]^{(n + \mathfrak{N}+1)}}{n^{n(1+\delta)}}
  e^{n(1+\delta)}
  (1+2LT)^{n(2+2\delta)} \right] \\
        & = \limsup_{n \rightarrow\infty}
        \left[\exp \bigg(\mathrm{ln}\bigg(\frac{[\alpha(n + \mathfrak{N}+1)]^{(n + \mathfrak{N}+1)}}{n^{n(1+\delta)}}
  e^{n(1+\delta)}
  (1+2LT)^{n(2+2\delta)}\bigg) \bigg) \right]
  =0.
    \end{split}
\raisetag{50pt}
\end{equation}
Combining this and \eqref{adelta} implies that 
for all $\delta \in (0, \infty)$ it holds that
\begin{equation}\label{finitea}
        \mathfrak{a}_\delta 
        = 
C^{2+2\delta}\sup_{n \in \N} 
  \left[ \frac{[\alpha(n + \mathfrak{N}+1)]^{(n + \mathfrak{N}+1)}}{n^{n(1+\delta)}}
  e^{n(1+\delta)}
  (1+2LT)^{n(2+2\delta)} \right] < \infty.
\end{equation}
In addition, observe that \eqref{Nepsilon}, the fact that for all $\varepsilon \in (0,1]$ it holds that $N_\varepsilon < \infty$, and the hypothesis that for all $n \in \N$ it holds that $\epsilon_n \leq C e^{n/2}(1+2LT)^n n^{-n/2}$ 
assure that for all $\varepsilon \in (0, 1]$ it holds that
\begin{equation} \label{260}
\sup_{n \in  [N_\varepsilon, \infty) \cap \N} \epsilon_n \leq
\sup_{n \in  [N_\varepsilon, \infty) \cap \N}
\big[C e^{n/2}(1+2LT)^n n^{-n/2} \big]
  \leq \varepsilon.
\end{equation}
Next let $E \subseteq (0,1]$ satisfy that $E = \lbrace \varepsilon \in (0,1] \colon N_\varepsilon >1
\rbrace$.
Note that \eqref{Nepsilon} ensures that 
for all $\varepsilon \in E$ 
it holds that
\begin{equation}
Ce^{{(N_{\varepsilon}-1)}/2}(1+2LT)^{(N_{\varepsilon}-1)}  (N_{\varepsilon}-1)^{-(N_{\varepsilon}-1)/2} > \varepsilon.
\end{equation} 
This implies that for all $\varepsilon \in E$ 
it holds that
\begin{equation}\label{f_Ne-1}
\frac{C}{\varepsilon}e^{{(N_{\varepsilon}-1)}/2}(1+2LT)^{(N_{\varepsilon}-1)}   > (N_{\varepsilon}-1)^{(N_{\varepsilon}-1)/2}
.
\end{equation} 
This, the hypothesis that for all $n \in \N$ it holds that $\gamma_n \leq (\alpha n)^n$, \eqref{finitea}, and the fact that
for all $K \in \N$, $\beta \in [1,\infty)$ it holds that
$\sup_{n \in  [1,K] \cap \N} (\beta n)^n = (\beta K)^K$
imply that for all $\varepsilon \in E$, $\delta \in (0,\infty)$
it holds that
\begin{equation} \label{262}
\begin{split}
\sup_{n \in  [1,N_\varepsilon + \mathfrak{N}] \cap \N}  \gamma_n
&\leq \sup_{n \in  [1,N_\varepsilon + \mathfrak{N}] \cap \N}  (\alpha n)^n 
= [\alpha(N_\varepsilon + \mathfrak{N})]^{(N_\varepsilon + \mathfrak{N})} 
=  \frac{[\alpha(N_\varepsilon + \mathfrak{N})]^{(N_\varepsilon + \mathfrak{N})}}{(N_{\varepsilon}-1)^{(N_{\varepsilon}-1)(1+\delta)}} (N_{\varepsilon}-1)^{(N_{\varepsilon}-1)(1+\delta)}\\
 &\leq \frac{[\alpha(N_\varepsilon + \mathfrak{N})]^{(N_\varepsilon + \mathfrak{N})}}{(N_{\varepsilon}-1)^{(N_{\varepsilon}-1)(1+\delta)}} \frac{C^{2+2\delta}} {\varepsilon^{2+2\delta}}
 e^{(N_\varepsilon-1)(1+\delta)}
  (1+2LT)^{(N_{\varepsilon}-1)(2+2\delta)} \\
  & \leq C^{2+2\delta} \varepsilon^{-(2+2\delta)}
\sup_{n \in [2,\infty)\cap\N} 
  \left[ \frac{[\alpha(n + \mathfrak{N})]^{(n + \mathfrak{N})}}{(n-1)^{(n-1)(1+\delta)}}
  e^{(n-1)(1+\delta)}
  (1+2LT)^{(n-1)(2+2\delta)} \right]\\
  & = 
  C^{2+2\delta} \varepsilon^{-(2+2\delta)}
\sup_{n \in \N} 
  \left[ \frac{[\alpha(n + \mathfrak{N}+1)]^{(n + \mathfrak{N}+1)}}{n^{n(1+\delta)}}
  e^{n(1+\delta)}
  (1+2LT)^{n(2+2\delta)} \right] 
  = \mathfrak{a}_\delta \varepsilon^{-(2+2\delta)}
  < \infty
 .
 \end{split}
\end{equation}
Moreover, observe that the hypothesis that for all $n \in \N$ it holds that $\gamma_n \leq (\alpha n)^n$ and the fact that
for all $K \in \N$, $\beta \in [1,\infty)$ it holds that
$\sup_{n \in  [1,K] \cap \N} (\beta n)^n = (\beta K)^K$
assure that for all 
$\varepsilon \in (0,1] \setminus E$, $\delta \in (0,\infty)$ 
it holds that 
\begin{equation}\label{rvne1}
\begin{split}
\sup_{n \in  [1,N_\varepsilon + \mathfrak{N}] \cap \N}  \gamma_n
& = 
\sup_{n \in  [1,\mathfrak{N}+1] \cap \N}  \gamma_n
\leq \sup_{n \in  [1,\mathfrak{N}+1] \cap \N} (\alpha n)^n
= [\alpha(\mathfrak{N}+1)]^{(\mathfrak{N}+1)}
\\
& \leq  \varepsilon^{-(2+2\delta)} [\alpha(\mathfrak{N}+1)]^{(\mathfrak{N}+1)}
< \infty.
\end{split}
\end{equation}
Next observe that \eqref{finitea} implies that for all $\delta \in (0,\infty)$ it holds that 
\begin{equation} \label{111}
\begin{split}
    \mathfrak{a}_\delta &= 
C^{2+2\delta} \sup_{n \in \N} 
  \left[ \frac{[\alpha(n + \mathfrak{N}+1)]^{(n + \mathfrak{N}+1)}}{n^{n(1+\delta)}}
  e^{n(1+\delta)}
  (1+2LT)^{n(2+2\delta)} \right]\\
  & \leq 
  \max \lbrace 1, C^{2+2\delta} \rbrace
  \sup_{n \in \N} 
  \left[ \frac{[\alpha(n + \mathfrak{N}+1)]^{(n + \mathfrak{N}+1)}}{n^{n(1+\delta)}}
  e^{n(1+\delta)}
  (1+2LT)^{n(2+2\delta)} \right] 
  < \infty.
  \end{split}
\end{equation}
In addition, observe that for all $\delta \in (0,\infty)$ it holds that
\begin{equation}
\begin{split}
  [\alpha(\mathfrak{N}+1)]^{(\mathfrak{N}+1)} 
  &\leq 
    \sup_{n \in \N} 
  \left[ \frac{[\alpha(n + \mathfrak{N}+1)]^{(n + \mathfrak{N}+1)}}{n^{n(1+\delta)}}
  e^{n(1+\delta)}
  (1+2LT)^{n(2+2\delta)} \right]\\
  & \leq 
  \max \lbrace 1, C^{2+2\delta} \rbrace
  \sup_{n \in \N} 
  \left[ \frac{[\alpha(n + \mathfrak{N}+1)]^{(n + \mathfrak{N}+1)}}{n^{n(1+\delta)}}
  e^{n(1+\delta)}
  (1+2LT)^{n(2+2\delta)} \right].
  \end{split}
\end{equation}
Combining this, \eqref{262}, \eqref{rvne1}, and \eqref{111}  establishes that 
for all $\varepsilon \in (0,1]$, $\delta \in (0,\infty)$ it holds that
\begin{equation}
\begin{split}
 \sup_{n \in  [1,N_\varepsilon + \mathfrak{N}] \cap \N}  \gamma_n
 & \leq  \, \varepsilon^{-(2+2\delta)}
 \max\Bigg\lbrace 
 \mathfrak{a}_\delta, [\alpha(\mathfrak{N}+1)]^{(\mathfrak{N}+1)} \Bigg\rbrace \\
  & \leq \varepsilon^{-(2+2\delta)}
  \max \lbrace 1, C^{2+2\delta} \rbrace
  \sup_{n \in \N} 
  \left[ \frac{[\alpha(n + \mathfrak{N}+1)]^{(n + \mathfrak{N}+1)}}{n^{n(1+\delta)}}
  e^{n(1+\delta)}
  (1+2LT)^{n(2+2\delta)} \right] < \infty.
 \end{split}
\end{equation}
This and \eqref{260} establish item~\eqref{it:Lemma2.4:item2}.
The proof of \cref{Lemma2.4} is thus completed.
\end{proof}

\begin{theorem}\label{THEOREM}
Let $d \in \N$, $\mathfrak{N} \in \N_0$, 
$T, L \in [0,\infty)$, $\Theta = \cup_{n=1}^{\infty} {\Z}^n$,  $\xi \in \R^d$, let $ \left\|  \cdot \right\| \! \colon \R^d \to [0,\infty) $ be a norm on $\R^d$, 
  let $(S,\mathcal{S})$ be a
measurable space, 
let $F \colon{\R}^d\times
S\rightarrow {\R}^d$  be $(\mathcal{B}({\R}^d) \otimes \mathcal{S}) / \mathcal{B}({\R}^d)$-measurable,
assume for all $x,y \in {\R}^d$, $s \in S$  that
$ \| F(x,s)-F(y,s)\| \leq L\|x-y\| $, 
 let $(\Omega,\mathcal{F},{\P})$ be a probability space,
 let $Z^\theta \colon\Omega\rightarrow S$, $\theta \in\Theta$,
be i.i.d.\ random variables, 
assume that $\E[ \|F(\xi, Z^0) \|^2] < \infty$,
let $\mathfrak{r}^\theta \colon \Omega \rightarrow [0,1]$,
$\theta \in \Theta$, be independent $\mathcal{U}_{[0,1]}$-distributed random variables,
 let $\mathcal{R}^\theta \colon
[0,T]\times\Omega \rightarrow [0,T]$, $\theta \in \Theta$, satisfy for all $t \in [0,T]$,  $\theta \in \Theta$ that $\mathcal{R}^\theta_t =  \mathfrak{r}^\theta t$,
assume that
${(\mathfrak{r}^\theta)}_{\theta\in\Theta}$ and
${(Z^\theta)}_{\theta\in\Theta}$ are independent,
let ${\mathcal{X}}_{n,m}^{\theta} \colon [0,T]\times\Omega \rightarrow {\R}^d $, $n,m \in \N_0$, $\theta \in \Theta$, satisfy
for all  $ n \in \N_0$,  $m \in {\N}$, $\theta \in \Theta$, $t \in [0,T]$ that
\begin{equation}
 \begin{split}
 {\mathcal{X}}_{n,m}^{\theta} (t) 
 & = \Bigg[
 \sum_{l=1}^{n-1}
 \frac{t}{m^{n-l}}  \sum_{k=1}^{m^{n-l}} F \big( \mathcal{X}_{l,m}^{(\theta,l,k)} (\mathcal{R}_{t}^{(\theta,l,k)}), Z^{(\theta,l,k)} \big) - F \big( \mathcal{X}_{l-1,m}^{(\theta,l,-k)}(\mathcal{R}_{t}^{(\theta,l,k)}), Z^{(\theta,l,k)} \big)\Bigg] \\
 &
 +\Bigg[  \frac{t \mathbbm{1}_{\N}(n)}{m^n}  \sum_{k=1}^{m^n} F \big(\xi,Z^{(\theta,0,k)}\big)\Bigg] 
 + \xi,
 \end{split}
\end{equation}
and let ${(RV_{n,m})}_{n \in \N_0, m \in \N} \subseteq {\N}_0$ satisfy for all $n,m \in {\N}$ that 
$RV_{0,m} = 0$
 and
\begin{equation} \label{againRV}
RV_{n,m} \leq m^n + \sum_{l=1}^{n-1} 
\left[ m^{n-l}( 1 + RV_{l,m} + RV_{l-1,m})  \right].
\end{equation}
Then 
\begin{enumerate}[(i)]
\item\label{it:THEOREM:item1} there exists a unique
$X \in \mathcal{C}([0,T], {\R}^d)$ such that for all $t \in [0,T]$ it holds that
$X(t) = \xi + \int_0^t {{\E}[ F (  X(r), Z^0) ]}\, dr$ (cf.\ \cref{finiteexp}) and
\item\label{it:THEOREM:item2} there exist $c
=(c_{\delta})_{\delta \in (0,\infty)}
\colon (0,\infty) \rightarrow [0,\infty)$ and $N
=(N_\varepsilon)_{\varepsilon \in (0,1]}
\colon (0,1] \rightarrow \N$ such that
for all $\delta \in (0, \infty)$, $\varepsilon \in (0,1]$ it holds that
$\sum_{n=1}^{N_{\varepsilon}+\mathfrak{N}}
RV_{n,n} \leq c_{\delta} {\varepsilon}^{-(2+\delta)}$ and
$\sup_{n \in  [N_\varepsilon, \infty) \cap \N} ( {\E}[\| X(T) - {\mathcal{X}}_{n,n}^{0}(T)\|^2])^{1/2} \leq \varepsilon$.  
\end{enumerate}
\end{theorem}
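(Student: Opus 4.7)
The proof naturally splits into the two items. For item~\eqref{it:THEOREM:item1} I would invoke a classical Banach fixed point argument, and for item~\eqref{it:THEOREM:item2} I would combine the mean-square error bound of \cref{thmerrorestimate} with the cost bound of \cref{lemma1}, the summation estimate of \cref{l4.3}, and the abstract complexity-comparison result of \cref{Lemma2.4}.

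For item~\eqref{it:THEOREM:item1}, I would introduce the operator $\Phi \colon \mathcal{C}([0,T],\R^d) \to \mathcal{C}([0,T],\R^d)$ defined by $\Phi(Y)(t) = \xi + \int_0^t \E[F(Y(r), Z^0)]\,dr$. \cref{finiteexp} (applied for every $Y \in \mathcal{C}([0,T],\R^d)$) ensures that the integrand is jointly measurable, that $r \mapsto \E[F(Y(r),Z^0)]$ is integrable on $[0,t]$ for every $t \in [0,T]$, and that its integral depends continuously on $t$, so that $\Phi$ maps $\mathcal{C}([0,T],\R^d)$ into itself. Equip $\mathcal{C}([0,T],\R^d)$ with the Bielecki-type weighted norm $Y \mapsto \sup_{t \in [0,T]} e^{-\lambda t}\|Y(t)\|$ for some $\lambda > L$; the Lipschitz assumption on $F$ together with Jensen's inequality yields a contraction estimate with constant $L/\lambda < 1$, so the Banach fixed point theorem furnishes a unique $X \in \mathcal{C}([0,T],\R^d)$ solving \eqref{sol}.

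For item~\eqref{it:THEOREM:item2}, I would apply \cref{thmerrorestimate} with $m = n$ to obtain, for every $n \in \N$,
\begin{equation*}
\big(\E\big[\|X(T) - \mathcal{X}_{n,n}^0(T)\|^2\big]\big)^{1/2} \leq C\, e^{n/2}(1+2LT)^n n^{-n/2},
\end{equation*}
where $C = T(\E[\|F(\xi,Z^0)\|^2])^{1/2} e^{LT}$. Similarly, \cref{lemma1} with $m=n$ gives $RV_{n,n} \leq (3n)^n$, and \cref{l4.3} then yields $\sum_{k=1}^{n} RV_{k,k} \leq \sum_{k=1}^n (3k)^k \leq 2(3n)^n \leq (6n)^n$ for every $n \in \N$. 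I would then apply \cref{Lemma2.4} with $\alpha \curvearrowleft 6$, $\gamma_n \curvearrowleft \sum_{k=1}^{n} RV_{k,k}$, $\epsilon_n \curvearrowleft (\E[\|X(T) - \mathcal{X}_{n,n}^0(T)\|^2])^{1/2}$, the constant $C$ as above, and with the lemma's $\delta$ replaced by $\delta/2$: this produces the map $N = (N_\varepsilon)_{\varepsilon \in (0,1]}$ together with a finite constant $c_\delta$ for every $\delta \in (0,\infty)$ so that $\sum_{n=1}^{N_\varepsilon + \mathfrak{N}} RV_{n,n} \leq c_\delta \varepsilon^{-(2+\delta)}$ while $\sup_{n \in [N_\varepsilon,\infty) \cap \N} \epsilon_n \leq \varepsilon$.

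The main obstacle is administrative rather than analytic: selecting $\alpha$, $C$, $\gamma_n$, $\epsilon_n$ to fit the hypotheses of \cref{Lemma2.4}, verifying the monotonicity $\gamma_n \geq \gamma_{n-1}$ so that the supremum in \cref{Lemma2.4} actually controls the full partial sum through $N_\varepsilon + \mathfrak{N}$, and choosing the reparametrization $\delta \mapsto \delta/2$ so that the exponent $-(2+2\delta)$ produced by \cref{Lemma2.4} collapses to the target exponent $-(2+\delta)$. All the substantial estimates have already been carried out in \cref{thmerrorestimate} and \cref{Lemma2.4}; the remaining work is bookkeeping.
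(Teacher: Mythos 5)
Your proposal follows the paper's own route almost step for step: item~(i) via a Picard--Lindel\"of/fixed-point argument (the paper simply cites Teschl for existence and uniqueness after checking that $x \mapsto \E[F(x,Z^0)]$ is $L$-Lipschitz), and item~(ii) by feeding the error bound of \cref{thmerrorestimate} and the cost bound of \cref{lemma1} into \cref{Lemma2.4} and reparametrizing $\delta \mapsto \delta/2$ to turn the exponent $-(2+2\delta)$ into $-(2+\delta)$. Your bookkeeping variant --- taking $\gamma_n = \sum_{k=1}^n RV_{k,k} \leq 2(3n)^n \leq (6n)^n$ and $\alpha = 6$ so that the monotone $\gamma_n$ directly delivers the partial sum --- is equivalent to the paper's choice of $\gamma_n = (3n)^n$, $\alpha = 3$, followed by a separate application of \cref{l4.3} to $\sum_{n=1}^{N_\varepsilon+\mathfrak{N}} RV_{n,n}$; both are valid.

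There is, however, one concrete step you skip. \cref{thmerrorestimate} is proved under \cref{setting}, which requires $\|\cdot\|$ to be the \emph{standard} norm on $\R^d$, whereas \cref{THEOREM} allows an arbitrary norm. You apply \cref{thmerrorestimate} verbatim with the arbitrary norm and with $C = T(\E[\|F(\xi,Z^0)\|^2])^{1/2}e^{LT}$, which is not licensed by the proposition as stated. The paper bridges this by fixing equivalence constants $a,b \in (0,\infty)$ with $a\|x\| \leq \vertiii{x} \leq b\|x\|$ for the standard norm $\vertiii{\cdot}$, observing that $F$ is then Lipschitz with constant $a^{-1}bL$ with respect to $\vertiii{\cdot}$, applying \cref{thmerrorestimate} in that norm, and converting back; this replaces $L$ by $a^{-1}bL$ throughout (including in the definition of $N_\varepsilon$ and in the hypothesis $\epsilon_n \leq Ce^{n/2}(1+2LT)^n n^{-n/2}$ of \cref{Lemma2.4}) and multiplies the constant by $ba^{-1}$, yielding $C = ba^{-1}e^{a^{-1}bLT}T(\E[\|F(\xi,Z^0)\|^2])^{1/2}$. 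The repair only changes constants and does not disturb the structure of your argument, but without it the invocation of \cref{thmerrorestimate} is formally invalid.
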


\begin{proof}[Proof of \cref{THEOREM}]
Throughout this proof 
let $ \vertiii \cdot  \colon \R^d \to [0,\infty) $ be the standard norm on $\R^d$,
let $a,b \in (0,\infty)$ satisfy
for all $x \in \R^d$ that 
	$a \| x \| \leq \vertiii{x} \leq b \| x \|$
(cf., e.g., Kreyszig 
\cite[Theorem 2.4-5]{MR0467220}),
let $C \in [0,\infty)$ satisfy that
$ C =ba^{-1}e^{a^{-1}bLT}T({\E}[ \|F(\xi, Z^0) \|^2])^{1/2} $,  
and let  $N
\colon (0,1] \rightarrow \N \cup \lbrace \infty \rbrace$ satisfy for all 
$\varepsilon \in (0,1]$ that
\begin{equation}
    N_\varepsilon =   \min\! \Bigg( \left\lbrace   n \in \N \colon \sup_{m \in [n,\infty)\cap\N} \Big[ C e^{m/2}(1+2a^{-1}bLT)^m m^{-m/2} \Big] \leq \varepsilon \right\rbrace  \cup \lbrace \infty \rbrace\Bigg).
\end{equation}
Observe that the assumption that for all
	$x,y \in {\R}^d$, $s \in S$ 
it holds that
	$ \| F(x,s)-F(y,s)\| \leq L\|x-y\| $
ensures that for all $x,y \in \R^d$ it holds that
\begin{equation}
    \begin{split}
        \| \E[F(x,Z^0)] - \E[F(y,Z^0)] \| 
        &=\| \E[F(x,Z^0) - F(y,Z^0)] \| 
        \\& 
        \leq  \E[\|F(x,Z^0) - F(y,Z^0)\|] 
        \leq L\E[\|x-y\|]
        = L\|x-y\|.
    \end{split}
\end{equation}
This ensures that
 there exists a unique
$X \in \mathcal{C}([0,T], {\R}^d)$ such that for all $t \in [0,T]$ it holds that
\begin{equation}
X(t) = \xi + \int_0^t {{\E}\big[ F (  X(r), Z^0)  \big]}\, dr 
\end{equation}
(cf., e.g., Teschl \cite[Section 2]{MR2961944}). 
This establishes item~\eqref{it:THEOREM:item1}.
Next observe that item~\eqref{it:Lemma2.4:item1} in \cref{Lemma2.4} (applied with $T \curvearrowleft T$, $L \curvearrowleft a^{-1}bL$, $C \curvearrowleft C$, $\mathfrak{N} \curvearrowleft \mathfrak{N}$ in the notation of \cref{Lemma2.4})  assures that for all 
$\varepsilon \in (0,1]$ it holds that $ N_\varepsilon< \infty$.
Next note that the fact that for all $x \in \R^d$ it holds that 
$a \| x \| \leq \vertiii{x} \leq b \| x \|$ and the assumption that for all $x,y \in {\R}^d$, $s \in S$ it holds that
$ \| F(x,s)-F(y,s)\| \leq L\|x-y\| $ ensure that for all $x,y \in \R^d$, $s \in S$ it holds that 
\begin{equation}
   \vertiiii{F(x,s)-F(y,s)} \leq b\|F(x,s)-F(y,s)\| \leq 
   bL \| x-y \| \leq 
   bLa^{-1}\vertiiii{x-y}.
\end{equation}
This, the fact that for all $x \in \R^d$ it holds that 
$a \| x \| \leq \vertiii{x} \leq b \| x \|$, the fact that
$ C =ba^{-1}e^{a^{-1}bLT}T({\E}[ \|F(\xi, Z^0) \|^2])^{1/2} $, \eqref{againRV}, \cref{lemma1}, and \cref{thmerrorestimate} imply that for all $n \in \N$ it holds that
$RV_{n,n} \leq (3n)^n$ and 
\begin{equation}
\begin{split}
&\big( {\E} \big[\| X(T) - {\mathcal{X}}_{n,n}^0(T)\|^2\big] 
\big)^{1/2} 
 \leq \frac{1}{a} \big( {\E} \big[\vertiiii{ X(T) - {\mathcal{X}}_{n,n}^0(T)}^2\big]
\big)^{1/2} \\ 
&\leq 
\frac{1}{a} \frac{ T
 \big( {\E}  \big[ \vertiiii{F(\xi, Z^0)}^2 \big] \big) ^{1/2}(1+2a^{-1}bLT)^{n} e^{(a^{-1}bLT + n/2)}}{n^{n/2}} \\
 &\leq 
\frac{b}{a} \frac{ T
 \big( {\E}  \big[ \|F(\xi, Z^0)\|^2 \big] \big) ^{1/2}(1+2a^{-1}bLT)^{n} e^{a^{-1}bLT}  e^{n/2}}{n^{n/2}} \\
 &=ba^{-1}e^{a^{-1}bLT}T({\E}[ \|F(\xi, Z^0) \|^2])^{1/2}(1+2a^{-1}bLT)^{n}e^{n/2}n^{-n/2}\\
 &=C(1+2a^{-1}bLT)^{n}e^{n/2}n^{-n/2}.
 \end{split}
\end{equation}
 \cref{Lemma2.4} (applied for every $n \in \N$ with $T \curvearrowleft T$, $L \curvearrowleft  a^{-1}bL$, $C\curvearrowleft C$,
 $\alpha \curvearrowleft 3$,
 $\mathfrak{N} \curvearrowleft \mathfrak{N}$, 
 $\gamma_n \curvearrowleft (3n)^n$,
 $\epsilon_n \curvearrowleft \big( {\E} \big[\| X(T) - {\mathcal{X}}_{n,n}^{0}(T)\|^2\big] 
\big)^{1/2}$
 in the notation of \cref{Lemma2.4})  hence proves that
 for all $\varepsilon \in (0,1]$, $\delta \in (0, \infty)$ it holds that
\begin{equation} \label{thisforitem2}
 \sup_{n \in  [N_\varepsilon, \infty) \cap \N} \big( {\E} \big[\| X(T) - {\mathcal{X}}_{n,n}^{0}(T)\|^2\big] 
\big)^{1/2}
     \leq \varepsilon
     \end{equation} and
\begin{equation}
\sup_{n \in  [1,N_\varepsilon + \mathfrak{N}] \cap \N}  (3n)^n
\leq 
\sup_{n \in \N} 
  \left[ \frac{[3(n + \mathfrak{N}+1)]^{(n + \mathfrak{N}+1)}}{n^{n(1+\delta)}}
  e^{n(1+\delta)}
  (1+2a^{-1}bLT)^{n(2+2\delta)} \right] 
  \max \lbrace 1, C^{2+2\delta} \rbrace \varepsilon^{-(2+2\delta)}
    < \infty.
\end{equation}
This, \eqref{againRV}, \cref{lemma1},  \cref{l4.3},
and the fact that
for all $K \in \N$, $\beta \in [1,\infty)$ it holds that
$\sup_{n \in  [1,K] \cap \N} (\beta n)^n= (\beta K)^K$
ensure that for all $\varepsilon \in (0,1]$, $\delta \in (0, \infty)$ it holds that
\begin{equation}
\begin{split}
    \sum_{n=1}^{N_\varepsilon+\mathfrak{N}} RV_{n,n} 
    & \leq \sum_{n=1}^{N_\varepsilon+\mathfrak{N}} (3n)^n \leq 2[3(N_\varepsilon+\mathfrak{N})]^{(N_\varepsilon+\mathfrak{N})}=
    2 \sup_{n \in  [1,N_\varepsilon+\mathfrak{N}] \cap \N} (3 n)^n \\
    &\leq 2
    \sup_{n \in \N} 
  \left[ \frac{[3(n + \mathfrak{N}+1)]^{(n + \mathfrak{N}+1)}}{n^{n(1+\delta)}}
  e^{n(1+\delta)}
  (1+2a^{-1}bLT)^{n(2+2\delta)} \right]
  \max \lbrace 1, C^{2+2\delta} \rbrace \varepsilon^{-(2+2\delta)} < \infty.
    \end{split}
\end{equation}
This and \eqref{thisforitem2} establish item~\eqref{it:THEOREM:item2}. 
The proof of \cref{THEOREM} is thus completed.
\end{proof}

\bibliographystyle{acm}
\bibliography{References}

\end{document}